\newtheorem{theorem}{Theorem}[section]
\newtheorem{definition}[theorem]{Definition}
\newtheorem{lemma}[theorem]{Lemma}
\newtheorem{proposition}[theorem]{Proposition}
\numberwithin{equation}{section}
\newcommand{\rr}{\mathbb{R}}
\newcommand{\eps}{\varepsilon}
\newcommand{\nn}{\mathbb{N}}
\newcommand{\cc}{\mathbb{C}}
\def\un{{\mathrm{1~\hspace{-1.4ex}l}}}
\def\R{\mathbb R}
\def\val#1{\vert#1\vert}
\def\l2{L^2(\R^{n})}
\def\L2{L^2(\R^{2n})}
\def\eps{\varepsilon}
\def\mat22#1#2#3#4{\begin{pmatrix}#1&#2\\ #3&#4\end{pmatrix}}
\begin{document}
\title[Spectral inequalities for anisotropic Shubin operators]{Spectral inequalities for anisotropic Shubin operators}

\author{Jérémy \textsc{Martin}}

\address{\noindent \textsc{Jérémy Martin, Univ Rennes, CNRS, IRMAR - UMR 6625, F-35000 Rennes, France
%. Mailing address: Universit\'e de Rennes 1, Campus de Beaulieu, 263 avenue du G\'en\'eral Leclerc, CS 74205,
%35042 Rennes cedex, France
}}
\email{jeremy.martin@univ-rennes1.fr}

\keywords{Spectral inequalities, Null-controllability, Observability, Shubin operators, Gelfand-Shilov regularity} 
\subjclass[2010]{93B05, 93B07, 35P99}

\begin{abstract}
In this paper, new spectral inequalities for finite combinations of eigenfunctions of anisotropic Shubin operators are presented. We provide explicit spectral estimates for weak thick subsets and subsets with positive Lebesgue measure. The proofs are based on recent uncertainty principles holding in Gelfand-Shilov spaces and new Bernstein type estimates deduced from quantitative smoothing effects proved by Paul Alphonse. As an application, we deduce the null-controllability in any positive time from any control subset with positive Lebesgue measure for evolution equations associated to anisotropic Shubin operators, except for the harmonic oscillator.
\end{abstract}
\maketitle
\section{Introduction}
This work aims at providing quantitative spectral inequalities for anisotropic Shubin operators and null-controllability results for the evolution equations associated to these operators. Anisotropic Shubin operators are defined as the following self-adjoint elliptic operators
\begin{equation*}\label{Shubin}
H_{k,m}= (-\Delta_x)^m+|x|^{2k}, \quad x \in \rr^d
\end{equation*} 
where $|\cdot|$ denotes the Euclidean norm, equipped with the domain
\begin{equation*}
D(H_{k,m})=\big\{g \in L^2(\rr^d): \ H_{k,m}g \in L^2(\rr^d) \big\}, 
\end{equation*}
where $k,m \geq 1$ are positive integers. These operators are known to admit a non-decreasing sequence of positive eigenvalues $(\lambda_n^{k,m})_{n \in \nn}$ satisfying $\lambda_n^{k,m} \underset{n \to +\infty}{\longrightarrow} +\infty$ and an Hilbert basis of eigenfunctions $(\psi^{k,m}_n)_{n \in \nn}$ of $L^2(\rr^d)$, where $\psi^{k,m}_n$ is associated to the eigenvalue $\lambda^{k,m}_n>0$. For $\lambda>0$, $k, m \in \nn^*$, we consider $\mathcal E_{\lambda}^{k,m}$ which is the finite dimensional space defined as
\begin{equation}\label{finite_sum_eg}
\mathcal E_{\lambda}^{k,m} =\text{Span}_{\cc} \left\{\psi_n^{k,m}\right\}_{n \in \nn, \ \lambda_n \leq \lambda}.
\end{equation}

Our main purpose is to establish spectral inequalities of the following form 
\begin{equation}\label{generic_SI}
\forall \lambda >0, \forall f \in \mathcal{E}_{\lambda}^{k,m}, \quad \|f\|_{L^2(\rr^d)} \leq C_{k,m,\lambda}(\omega) \|f\|_{L^2(\omega)},
\end{equation}
where $\omega \subset \rr^d$ is a measurable subset and $C_{k,m, \lambda}(\omega) >0$ is a positive constant. The main result of this paper, namely Theorem~\ref{spec_ineq}, states such spectral inequalities under different geometric conditions on $\omega$. In particular, we provide quantitative constants $C_{k,m,\lambda}(\omega)$ with respect to the parameters $k, m, \lambda$. Mainly, our results deal with the thickness condition with respect to some density:

\medskip
\begin{definition}\label{weakly_thick}
Let $\omega \subset \rr^d$ be a measurable subset.\\
1) Let $0 < \gamma \leq 1$ and $\rho : \rr^d \longrightarrow (0,+\infty)$ be a positive function. The set $\omega$ is said to be $\gamma$-thick with respect to $\rho$ if and only if 
$$\forall x \in \rr^d, \quad |\omega \cap B(x, \rho(x))| \geq \gamma |B(x, \rho(x))|,$$
where $B(x, L)$ denotes the Euclidean ball of $\rr^d$ centered at $x$ with radius $L$ and $|A|$ denotes the Lebesgue measure of the measurable set $A$.\\
2) The subset $\omega$ is said to be $\delta$-weakly thick, for some $0 \leq \delta \leq 1$, if and only if there exist some positive constants $0< \gamma \leq 1$ and $R >0$ such that $\omega$ is $\gamma$-thick with respect to the density
\begin{equation*}
\rho(x)= R \langle x \rangle^{\delta}, \quad x \in \rr^d,
\end{equation*}
where $\langle x \rangle = \left(1+|x|^2\right)^{\frac12}$.
\end{definition}
\noindent Some examples and facts about the weak thickness condition are given in Section~\ref{thick_section}.\\

When $k=m=1$, the operator $H_{1,1}$ is the harmonic oscillator 
$$\mathcal{H}=-\Delta_x+|x|^2.$$
Thus, the Hilbert basis $(\psi^{1,1}_n)$ can be taken equal to the Hermite basis $(\Phi_{\alpha})_{\alpha \in \nn^d}$ given by 
$$\forall \alpha=(\alpha_1,...,\alpha_d) \in \nn^d, \forall x=(x_1,...,x_d) \in \rr^d, \quad \Phi_{\alpha}(x) = \prod_{i=1}^d H_i(x_i) e^{-\frac{x^2}{2}},$$
where for all $n \in \nn$, $H_n$ is the $n^{\text{th}}$-Hermite polynomial. In this case, spectral inequalities have been widely studied and recent results explicitly describe the growth of the constant $C_{1,1, \lambda}(\omega)$ appearing in \eqref{generic_SI} with respect to the energy level $\lambda$:
\begin{itemize}
\item In \cite[Theorem~2.1]{kkj}, Beauchard, Jaming and Pravda-Starov established that when $\omega$ is $0$-weakly thick (in that case, the set is said to be thick), the constant is of the form $C_{1,1, \lambda}(\omega)= K^{1+\sqrt \lambda}$ for some constant $K\geq1$ independent on $\lambda$. When the set $\omega$ satisfies the weaker geometric condition \begin{equation}\label{1weak_kkj}
\liminf_{R \to +\infty} \frac{|\omega \cap B(0,R)|}{|B(0,R)|} >0,
\end{equation}
the authors showed that the constant can be taken equal to $C_{1,1, \lambda}(\omega)= K^{1+\lambda}$ for some constant $K\geq1$ independent on $\lambda$. These authors have also obtained spectral inequalities for any non empty open subsets $\omega$ with a constant of the form $C_{1,1, \lambda}(\omega)=K^{1+\lambda |\log \lambda|}$.
\item In \cite[Theorem~2.1]{MP}, Pravda-Starov and the author showed that the constant can be taken equal to $C_{1,1, \lambda}(\omega)= K^{1+\lambda^{\delta}}$ as soon as $\omega$ is thick with respect to a $\frac12$-Lipschitz density $\rho$ satisfying 
$$\exists m, R>0, \forall x \in \rr^d, \quad 0<m\leq \rho(x) \leq R \langle x \rangle^{\delta},$$
with $0\leq \delta < 1$.
\item In \cite{dicke}, Dicke, Seelmann and Veselić generalised spectral inequalities given by \cite[Theorem~2.1]{MP}. Their work deal with a more general class of measurable subsets of $\rr^d$, called \textit{sensor sets}, which satisfy
\begin{equation}\label{thick_dicke}
\exists 0< \gamma \leq 1, \forall x \in \rr^d, \quad |\omega \cap B(x, \rho(x))| \geq \gamma^{1+|x|^{\eps’}} |B(x, \rho(x))|,
\end{equation}
where $\rho : \rr^d \longrightarrow (0,+\infty)$ is a continuous density satisfying
$$\exists R>0, \forall x \in \rr^d, \quad 0< \rho(x) \leq R \langle x \rangle ^{1-\eps},$$
with $0\leq \eps’ < \eps \leq 1$. Under this new geometric condition, they established in \cite[Theorem~2.7]{dicke} that the constant can be taken of the form $$C_{1,1, \lambda}(\omega)= \frac{1}{3} \Big(\frac{K}{\gamma}\Big)^{K^{1+\delta} d^{(11+3\delta)/2} (1+R^2) N^{1- \frac{\eps-\eps’}{2}}},$$
where $K>0$ is a positive universal constant.

\end{itemize}

In the case when $m=1$ and $k \geq 1$, the Shubin operator 
$$H_{k,1}= -\Delta_x +|x|^{2k}$$
is an anharmonic oscillator. In that case, Miller established in \cite{Miller0} spectral inequalities for finite sums of eigenfunctions associated to these operators when $\omega$ is an open cone:

\medskip
\begin{theorem}[{\cite[Corollary~1.6]{Miller0}}]\label{miller_thm}
Let $k \in \nn^*$. For any non empty open cone $$\omega = \big\{x \in \rr^d ; \ |x|>r_0,\; x/|x| \in \Omega_0\big\},$$ where $r_0 \geq 0$ and $\Omega_0$ is an open subset of the unit sphere, there is a constant $C>0$ such that
\begin{equation*}
\forall \lambda > 0, \forall g \in \mathcal{E}_{\lambda}^{k,1}, \quad \int_{\rr^d} |g(x)|^2 \mathrm dx \leq C e^{C \lambda^{\frac{1}{2}(1+\frac{1}{k})}} \int_{\omega} |g(x)|^2 \mathrm dx.
\end{equation*}
\end{theorem}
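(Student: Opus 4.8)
The plan is to combine two facts: the Gelfand--Shilov regularity of the eigenfunctions of $H_{k,1}$, which turns every $g \in \mathcal E_\lambda^{k,1}$ into the restriction to $\rr^d$ of an entire function on $\cc^d$ with quantitatively controlled growth, and a Phragmén--Lindelöf / propagation-of-smallness argument in $\cc^d$ transferring the smallness of $g$ on the cone $\omega$ to smallness of $g$ on the classically allowed region $\{|x| \lesssim \lambda^{1/2k}\}$ of the operator, which (by the same growth bound) is where essentially all the $L^2$-mass of $g$ lives. The exponent $\tfrac12\bigl(1+\tfrac1k\bigr) = \tfrac{k+1}{2k}$ is dictated by the decay scale of the eigenfunctions: an eigenfunction of eigenvalue $\leq \lambda$ decays like $e^{-c|x|^{k+1}}$ with $c$ \emph{independent of $\lambda$} but with a prefactor of size $e^{C\lambda^{(k+1)/(2k)}}$, its natural length scale being $|x| \sim \lambda^{1/2k}$, where $e^{c|x|^{k+1}} \sim e^{c\lambda^{(k+1)/(2k)}}$.

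The first step is to record the quantitative Gelfand--Shilov estimate, uniform over $\mathcal E_\lambda^{k,1}$: there are $c, C > 0$ depending only on $k, d$ such that every $g \in \mathcal E_\lambda^{k,1}$ extends to an entire function on $\cc^d$ with
\begin{equation*}
|g(x+iy)| \leq C\, e^{C\lambda^{\frac{k+1}{2k}}}\, e^{-c|x|^{k+1} + C|y|^{k+1}}\, \|g\|_{L^2(\rr^d)}, \qquad x, y \in \rr^d .
\end{equation*}
This is the quantitative, sum-level form of the fact that $\psi_n^{k,1} \in S^{k/(k+1)}_{1/(k+1)}(\rr^d)$; it can be obtained either from a parametrix/resolvent construction for the globally elliptic (anisotropic) Shubin operator $H_{k,1}$ controlling the spectral projector $\un_{[0,\lambda]}(H_{k,1})$, whose range is $\mathcal E_\lambda^{k,1}$, as an operator from $L^2(\rr^d)$ into that space with operator norm $\lesssim e^{C\lambda^{(k+1)/(2k)}}$, or from Agmon estimates applied uniformly over $\{\lambda_n^{k,1} \leq \lambda\}$ followed by an analytic bootstrap of $-\Delta\psi_n = (\lambda_n^{k,1} - |x|^{2k})\psi_n$ and a summation over the (polynomially many) eigenvalues below $\lambda$. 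Restricting the estimate to $y = 0$ and integrating, one finds, for $R_\lambda := A\lambda^{1/2k}$ with $A = A(k,d)$ chosen large enough that the gain $e^{-cA^{k+1}\lambda^{(k+1)/(2k)}}$ from the decay beats the prefactor $e^{C\lambda^{(k+1)/(2k)}}$,
\begin{equation*}
\int_{|x| > R_\lambda} |g(x)|^2 \,\mathrm dx \leq \tfrac12 \int_{\rr^d} |g(x)|^2 \,\mathrm dx , \qquad \text{hence} \qquad \|g\|_{L^2(\rr^d)} \leq \sqrt{2}\, \|g\|_{L^2(B(0,R_\lambda))} .
\end{equation*}

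For the second step, since $\omega$ is a cone it contains, for $\lambda$ large, a real ball $B_1 \subset 2B_1 \subset \omega$ of radius $r_1 \asymp R_\lambda$ sitting at distance $\asymp R_\lambda$ from the origin, the implied constants depending only on $d$ and the half-angle of $\omega$; the regime of bounded $\lambda$ is handled separately by a compactness argument in the finite-dimensional space $\mathcal E_\lambda^{k,1}$, on which $\|\cdot\|_{L^2(\omega)}$ is a norm, since a sum of eigenfunctions is real-analytic and cannot vanish on the open set $\omega$ unless it is $0$. On the complex ball $D$ of radius $2R_\lambda$ centered at $0$ one has $|g| \leq M := C\, e^{C\lambda^{(k+1)/(2k)}}\, \|g\|_{L^2(\rr^d)}$ by the growth estimate, all relevant points satisfying $|x|, |y| \lesssim R_\lambda \asymp \lambda^{1/2k}$; while on $B_1$ the plurisubharmonic sub-mean-value inequality for $|g|^2$ over $2B_1 \subset \omega$ gives $|g| \leq \varepsilon := \sup_{B_1} |g| \leq C\lambda^{N}\|g\|_{L^2(\omega)}$ for some $N = N(d,k)$. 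Applying the two-constants theorem to the plurisubharmonic function $\log|g|$ on $D$ — bounding $\log|g| - \log M$ on $B(0,R_\lambda)$ by $(\log\varepsilon - \log M)$ times the harmonic measure of $B_1$ relative to $D$ — yields $\sup_{B(0,R_\lambda)}|g| \leq M^{1-\theta}\varepsilon^{\theta}$ with $\theta = \theta(d, \omega) \in (0,1)$ \emph{independent of $\lambda$}, since after the dilation $x \mapsto R_\lambda x$ the whole configuration is fixed. Combining with the localization above, $\|g\|_{L^2(\rr^d)}^2 \leq C R_\lambda^d\, M^{2-2\theta}\varepsilon^{2\theta}$, and solving for $\|g\|_{L^2(\rr^d)}$ — the powers of $R_\lambda$ and $\lambda$ and the fixed number $1/\theta$ being harmless next to $e^{C\lambda^{(k+1)/(2k)}}$ after enlarging the constant — gives exactly
\begin{equation*}
\int_{\rr^d} |g(x)|^2 \,\mathrm dx \leq C\, e^{C\lambda^{\frac12(1+\frac1k)}} \int_\omega |g(x)|^2 \,\mathrm dx .
\end{equation*}

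The hard part, and essentially the only non-soft ingredient, is the first step: obtaining the entire extension of the \emph{sums} $g \in \mathcal E_\lambda^{k,1}$ (not just of individual eigenfunctions) with the sharp $e^{C\lambda^{(k+1)/(2k)}}$ dependence on the energy level; this is where the anisotropic Shubin / Gelfand--Shilov machinery is genuinely needed and where one must keep track of the scaling $|x| \sim \lambda^{1/2k}$, $|\xi| \sim \lambda^{1/2}$. By contrast, the $\cc^d$ propagation of smallness is a scale-invariant application of the two-constants theorem, so the exponent $\theta$ never degrades as $\lambda \to +\infty$ and the loss remains purely of exponential type in $\lambda^{(k+1)/(2k)}$. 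As an alternative to the complex-analytic half one could instead lift $g$ to the solution $u(y,x) = \sum_n c_n \frac{\sinh(y\sqrt{\lambda_n^{k,1}})}{\sqrt{\lambda_n^{k,1}}}\, \psi_n^{k,1}(x)$ of the elliptic equation $\partial_y^2 u + \Delta_x u = |x|^{2k} u$ on $\rr \times \rr^d$, with $u(0,\cdot) = 0$ and $\partial_y u(0,\cdot) = g$, and run a Carleman / three-balls argument for it; the growth bounds that this requires are again those of the first step.
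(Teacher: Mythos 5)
Your overall architecture (quantitative Gelfand--Shilov/Bernstein bounds uniform over $\mathcal E_\lambda^{k,1}$ with prefactor $e^{C\lambda^{(k+1)/(2k)}}$, localization of the mass to a ball of radius $\asymp\lambda^{1/2k}$, then propagation of smallness from the cone to that ball) is exactly the skeleton the paper uses, except that the paper does not prove Miller's statement directly: it recovers it as the special case $m=1$, $\delta=1$ of Theorem~\ref{spec_ineq}, whose proof combines the Bernstein estimates of Proposition~\ref{bernstein_estim} (derived from Alphonse's smoothing estimates, i.e.\ your ``hard first step'') with a \emph{real-variable} Remez/Kovrijkine-type inequality for functions with Gevrey bounds (Proposition~\ref{NSV_example}), which only needs $\omega\cap B(0,c\lambda^{1/2k})$ to have positive relative measure. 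Your complex-analytic two-constants argument instead needs an open ball $2B_1\subset\omega$, which is fine for a cone but is strictly less general; the scale-invariance observation giving a $\lambda$-independent exponent $\theta$ is correct in spirit.

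There is, however, a genuine gap in the bridge between the two halves: you bound $\sup_{B_1}|g|\le C\lambda^{N}\|g\|_{L^2(\omega)}$ by ``the plurisubharmonic sub-mean-value inequality for $|g|^2$ over $2B_1\subset\omega$''. The function $|g|^2$ is plurisubharmonic on $\cc^d$, but $2B_1$ is a ball inside the totally real subspace $\rr^d$, and the restriction of $|g|^2$ to $\rr^d$ is \emph{not} subharmonic in the real variables (already for $d=1$, $g(z)=\cos z$ gives $|g(x)|^2=\cos^2 x$, which violates the mean-value inequality near $0$); so averaging over a real ball is not justified, and this is precisely the nontrivial passage from $L^2$-smallness on a real set to pointwise smallness. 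This is the step for which the paper invokes Proposition~\ref{NSV_example}. You can repair it, e.g.\ by a gradient argument: if $x_1$ realizes $\sup_{\overline{B_1}}|g|$, your Bernstein-type bound $\|\nabla g\|_{L^\infty}\le Ce^{C\lambda^{(k+1)/(2k)}}\|g\|_{L^2(\rr^d)}$ shows $|g|\ge\frac12|g(x_1)|$ on a ball of explicit (exponentially small in $\lambda^{(k+1)/(2k)}$) radius around $x_1$ contained in $2B_1$, which yields $\sup_{B_1}|g|\lesssim e^{C'\lambda^{(k+1)/(2k)}}\|g\|_{L^2(\omega)}^{\theta'}\|g\|_{L^2(\rr^d)}^{1-\theta'}$ after absorption --- an extra exponential factor that is harmless for the stated inequality; alternatively, interior elliptic estimates for the lifted solution $u$ of $\partial_y^2u+\Delta_xu=|x|^{2k}u$ that you mention, or a Remez/Kovrijkine lemma as in the paper, do the job. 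As written, though, the two-constants step has no valid input $\varepsilon$, so the proof is incomplete at that point.
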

\medskip

\noindent Notice that the case $k=1$ is a particular case of the spectral inequalities given by \cite[Theorem~2.1]{kkj} under the geometric condition \eqref{1weak_kkj}, since the cone $\omega$ appearing in Theorem~\ref{miller_thm} satisfies this condition.
 
Through the Lebeau-Robbiano method, spectral inequalities have known a great interest in the control theory. This classical method, originally developed in \cite{lebeau_robbiano} in order to establish null-controllability results for the heat equation posed on a bounded domain $\Omega$ of $\rr^d$, requires to estimate finite sums of eigenfunctions. An adapted Lebeau-Robbiano method taken from \cite[Theorem~2.1]{egidi} is recalled in Appendix~\ref{control_appendix}, Theorem~\ref{Meta_thm_AdaptedLRmethod}. Following this strategy, Pravda-Starov and the author established in \cite[Theorem~2.5]{MP} that, given $\frac{1}{2}<s \leq 1$, the fractional harmonic heat equation
\begin{equation}\label{PDE_harmonic}
\left\lbrace \begin{array}{ll}
\partial_tf(t,x) + \mathcal{H}^s f(t,x)=u(t,x)\un_{\omega}(x), \quad &  x \in \mathbb{R}^d, \ t>0, \\
f|_{t=0}=f_0 \in L^2(\rr^d),                                       &  
\end{array}\right.
\end{equation}
is null-controllable in any time $T>0$ from any thick subset $\omega \subset \rr^d$ with respect to a $\frac{1}{2}$-Lipschitz density $\rho : \rr^d \longrightarrow (0,+\infty)$ verifying 
\begin{equation*}
\forall x \in \rr^d, \quad 0<m \leq \rho(x) \leq R{\left\langle x\right\rangle}^{\delta},
\end{equation*}
with $0\leq\delta <2s-1$. More generally, the authors of \cite[Theorem~3.2]{dicke} have shown that sensor sets defined by \eqref{thick_dicke} are efficient control subsets for fractional harmonic heat equations. In particular, these authors have constructed control subsets of finite Lebesgue measure which ensure the null-controllability of the harmonic heat equation. 
%Regarding the case when $s>1$, by using the spectral inequalities given in Theorem~\ref{kkj_thm} assertion (iii), Alphonse proved in \cite[Theorem~2.9]{Alphonse} that the evolution equation \eqref{PDE_harmonic} is null-controllable in any time $T>0$ as soon as $\omega$ is a nonzero Lebesgue measure subset of $\rr^d$. 

Such null-controllability questions for evolution equations associated to anharmonic oscillators
\begin{equation}\label{PDE_anharmonic}
\left\lbrace \begin{array}{ll}
\partial_tf(t,x) + (-\Delta_x +|x|^{2k}) f(t,x)=u(t,x)\un_{\omega}(x), \quad &  x \in \mathbb{R}^d, \ t>0, \\
f|_{t=0}=f_0 \in L^2(\rr^d),                                       &  
\end{array}\right.
\end{equation}
have been investigated by Miller in \cite{Miller0}. He obtained in \cite[Theorem~1.10]{Miller0} that for $k>1$, the evolution equation \eqref{PDE_anharmonic} is null-controllable in any time $T>0$ from any open cone $\omega$ defined in Theorem~\ref{miller_thm}. However, this result does not hold true for $k=1$, since \cite[Theorem~1.10]{Miller0} states that harmonic heat equation \eqref{PDE_harmonic} (with $s=1$) is not null-controllable at any time $T>0$ when $\omega$ is contained in a half space.

%This paper aims at generalizing spectral inequalities stated in Theorem~\ref{Spectral_Hermite}, Theorem~\ref{kkj_thm} and Theorem~\ref{miller_thm}. More precisely, we will consider measurable subsets which are said to be weakly thick:
%
%\begin{definition}\label{weakly_thick}
%Let $0 \leq \delta \leq 1$ and $\omega \subset \rr^d$ be a measurable subset. $\omega$ is said to be $\delta$-\textit{weakly thick} if and only if there exists some positive constants $0< \gamma \leq 1$, $R >0$ such that
%\begin{equation*}
%\forall x \in \rr^d, \quad \big|\omega \cap B\big(x, R \langle x \rangle^{\delta}\big)\big| \geq \gamma \big|B\big(x, R \langle x \rangle^{\delta}\big)\big|.
%\end{equation*}
%\end{definition}
%Some examples and facts about weak thick sets are given in Section~\ref{thick_section}. 
The main results contained in this work are given in Section~\ref{main_result} which is divided into two parts. The first part, Section~\ref{ineq_spec_results}, is devoted to present quantitative spectral inequalities for finite sums of eigenfunctions of anisotropic Shubin operators, given by Theorem~\ref{spec_ineq}. These spectral estimates generalize the ones given by \cite[Theorem~2.1]{MP}, \cite[Theorem~2.1]{kkj} and \cite[Corollary~1.6]{Miller0}. A second part, Section~\ref{control_result}, deals with the null-controllability of evolution equations associated to these operators. This work generalizes and improves known null-controllability results concerning these evolution equations. These new null-controllability results will be obtained as a consequence of Theorem~\ref{spec_ineq} and the Lebeau-Robbiano strategy. The proofs of these results are given in Section~\ref{proof_result} and \ref{proof_cont_shubin}. 
%As in Theorem~\ref{Spectral_Hermite}, Theorem~\ref{kkj_thm} and Theorem~\ref{miller_thm}, the spectral inequalities established in this work (Theorem~\ref{spec_ineq}) are quantitative and provide explicit constants which depend on energy levels, the parameter $\delta$ appearing in the weak thickness definition and the indexes $k,m \geq 1$ appearing in the definition of the Shubin operator $H_{k,m}$. As an application, we derive new null-controllability results for evolution equations associated to fractional anisotropic Shubin operators 
%\begin{equation*}\label{PDE_shubin0}
%\left\lbrace \begin{array}{ll}
%\partial_tf(t,x) + \big((-\Delta_x)^m+|x|^{2k}\big)^s f(t,x)=u(t,x)\un_{\omega}(x), \quad &  x \in \mathbb{R}^d, \ t>0, \\
%f|_{t=0}=f_0 \in L^2(\rr^d),                                       &  
%\end{array}\right.
%\end{equation*}
% from any subset with positive Lebesgue measure (Theorem~\ref{control_shubin_hd}). The null-controllability of these equations will be obtained by a Lebeau-Robbiano strategy and these results generalize and complete previous null-controllability results for these equations.

\section{Statements of the main results}\label{main_result}
This section is devoted to present the main results of this work. The first part states new spectral inequalities for finite combinations of eigenfunctions associated to anisotropic Shubin operators. The second one deals with the null-controllability of evolution equations associated to these operators.
\subsection{Spectral inequalities for anisotropic Shubin operators}\label{ineq_spec_results}
This section is devoted to state spectral inequalities for finite combinations of eigenfunctions associated to anisotropic Shubin operators.
% Let us define, for all $\lambda>0$, the finite dimensional subspace
%\begin{equation}\label{espacepropreshubin}
%\mathcal{E}^{k,m}_{\lambda} = \un_{H_{k,m} \leq \lambda}\big(L^2(\rr^d)\big)
%\end{equation}
%where
%\begin{equation*}
%\forall g \in L^2(\rr^d), \quad \un_{H_{k,m} \leq \lambda}g = \sum_{\substack{n \in \nn, \\ \lambda^{k,m}_n \leq \lambda}} \langle g, \psi^{k,m}_n \rangle_{L^2(\rr^d)} \psi^{k,m}_n.
%\end{equation*}
One of the main results of this work is the following:
\medskip
\begin{theorem}\label{spec_ineq}
Let $k, m \geq 1$ be positive integers and $0\leq \delta \leq 1$. Let $\omega \subset \rr^d$ be a measurable subset. \\
\textit{(i)} If $\omega$ is $\delta$-weakly thick, then there exists a positive constant $K=K(\gamma, R, \delta, k, m)>0$ such that
\begin{equation*}
\forall \lambda >0, \forall f \in \mathcal{E}^{k,m}_{\lambda}, \quad \|f\|_{L^2(\rr^d)} \leq K e^{K \lambda^{\frac{1}{2}(\frac{\delta}{k}+\frac{1}{m})}} \|f\|_{L^2(\omega)},
\end{equation*}
where $0< \gamma \leq 1$ and $R>0$ are the constants appearing in Definition~\ref{weakly_thick}.
\medskip

\noindent
\textit{(ii)} If $\omega$ satisfies $|\omega|>0$, then there exists a positive constant $K=K(\omega, k,m)>0$ such that
\begin{equation*}
\forall \lambda >0, \forall f \in \mathcal{E}^{k,m}_{\lambda}, \quad \|f\|_{L^2(\rr^d)} \leq K e^{K \lambda^{\frac{1}{2}(\frac{1}{k}+\frac{1}{m})} |\log \lambda|} \|f\|_{L^2(\omega)},
\end{equation*}
where $\mathcal{E}^{k,m}_{\lambda}$ is defined in \eqref{finite_sum_eg}.
\end{theorem}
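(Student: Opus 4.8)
The strategy is to combine a \emph{Bernstein type estimate} for finite combinations of eigenfunctions of $H_{k,m}$ with the uncertainty principles in Gelfand--Shilov spaces recalled earlier in the paper. Since $\mathcal{E}^{k,m}_\lambda$ is finite dimensional and its elements are real-analytic ($H_{k,m}$ being elliptic with polynomial coefficients), a compactness argument on the unit sphere of $\mathcal{E}^{k,m}_\lambda$ already yields, for every fixed $\lambda>0$, some admissible constant $C_{k,m,\lambda}(\omega)>0$; only the rate as $\lambda\to+\infty$ is at stake, so one may assume $\lambda$ as large as needed.

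\emph{Step 1 (Bernstein estimate).} The first and main task is to establish a constant $C=C(k,m,d)>0$ such that, for every $\lambda\ge 1$ and $f\in\mathcal{E}^{k,m}_\lambda$,
\begin{equation*}
\big\|x^\alpha\partial_x^\beta f\big\|_{L^2(\rr^d)}\le C^{1+|\alpha|+|\beta|}\,(\alpha!)^{\frac{m}{k+m}}(\beta!)^{\frac{k}{k+m}}\,\lambda^{\frac{|\alpha|}{2k}+\frac{|\beta|}{2m}}\,\|f\|_{L^2(\rr^d)},\qquad \alpha,\beta\in\nn^d.
\end{equation*}
This is exactly the statement that $f\in S^{k/(k+m)}_{m/(k+m)}(\rr^d)$ (the critical Gelfand--Shilov index, since $\tfrac{k}{k+m}+\tfrac{m}{k+m}=1$) with seminorms controlled by the scale $\lambda^{1/(2k)}$ in $x$ and $\lambda^{1/(2m)}$ in $\xi$. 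For the low-order terms I would argue directly from the form inequalities $|x|^{2k}\le H_{k,m}$ and $(-\Delta_x)^m\le H_{k,m}$: by the L\"owner--Heinz inequality ($t\mapsto t^{1/k}$ and $t\mapsto t^{1/m}$ are operator monotone) one gets $|x|^2\le H_{k,m}^{1/k}$ and $(-\Delta_x)\le H_{k,m}^{1/m}$, hence $\||x|f\|_{L^2}\le\lambda^{1/(2k)}\|f\|_{L^2}$ and $\|\nabla f\|_{L^2}\le\lambda^{1/(2m)}\|f\|_{L^2}$ on $\mathcal{E}^{k,m}_\lambda$, with \emph{no} spurious power of $\lambda$. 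For the factorial growth I would use the quantitative Gelfand--Shilov smoothing estimates for the semigroup $(e^{-tH_{k,m}})_{t>0}$ obtained by P.~Alphonse: writing $f=e^{-tH_{k,m}}h$ with $h=e^{tH_{k,m}}f\in\mathcal{E}^{k,m}_\lambda$, $\|h\|_{L^2}\le e^{t\lambda}\|f\|_{L^2}$, and taking $t=1/\lambda$ gives
\begin{equation*}
\big\|x^\alpha\partial_x^\beta f\big\|_{L^2}\le e\,\big\|x^\alpha\partial_x^\beta e^{-H_{k,m}/\lambda}\big\|_{\mathcal{L}(L^2(\rr^d))}\,\|f\|_{L^2},
\end{equation*}
into which Alphonse's bound on $\|x^\alpha\partial_x^\beta e^{-tH_{k,m}}\|_{\mathcal{L}(L^2)}$ at $t=1/\lambda$ is inserted; the delicate bookkeeping is to reabsorb any fixed power of $1/t$ occurring there without perturbing the exponents of $\lambda$.

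\emph{Step 2 (conclusion).} With this estimate, both statements follow from the Gelfand--Shilov uncertainty principles. For $(i)$, feeding the seminorm bounds of Step~1 and the $\delta$-weak thickness of $\omega$ into the uncertainty principle: the bound $\||x|f\|_{L^2}\lesssim\lambda^{1/(2k)}\|f\|_{L^2}$ confines $f$, up to negligible $L^2$ mass, to the ball $\{|x|\lesssim\lambda^{1/(2k)}\}$, on which the density $\rho(x)=R\langle x\rangle^\delta$ is $\lesssim R\lambda^{\delta/(2k)}$, while the frequency seminorm makes $f$ essentially band-limited at scale $\lambda^{1/(2m)}$; the uncertainty principle then produces a constant governed by the product of these two scales, $\exp\big(K\,\lambda^{\delta/(2k)}\lambda^{1/(2m)}\big)=\exp\big(K\,\lambda^{\frac12(\frac{\delta}{k}+\frac1m)}\big)$, as claimed. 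For $(ii)$, picking $R_1>0$ with $c_0:=|\omega\cap B(0,R_1)|>0$, the set $\omega$ is, inside the ball $\{|x|\lesssim\lambda^{1/(2k)}\}$ carrying $f$, thick at scale $\asymp\lambda^{1/(2k)}$ but only with a thickness parameter $\gamma_\lambda\asymp c_0\,\lambda^{-d/(2k)}$ degenerating polynomially; the same argument now yields $\exp\big(K\,\lambda^{1/(2k)}\lambda^{1/(2m)}\log(1/\gamma_\lambda)\big)=\exp\big(K'\,\lambda^{\frac12(\frac1k+\frac1m)}|\log\lambda|\big)$, the passage from $\gamma_\lambda$ to $|\log\lambda|$ being the sole source of the logarithmic loss.

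The hard part is Step~1: getting the Bernstein estimate with the sharp factorial exponents $\tfrac{m}{k+m},\tfrac{k}{k+m}$ and the sharp powers $\tfrac{1}{2k},\tfrac{1}{2m}$ of $\lambda$, and above all with no leftover fixed power of $\lambda$ on the right-hand side --- any such factor would, once raised to a power $\sim\lambda^{\frac12(\frac{\delta}{k}+\frac1m)}$ inside the uncertainty principle, spoil $(i)$ with an extraneous $|\log\lambda|$. This is exactly where the clean operator-monotonicity bounds for the low orders must be combined with Alphonse's quantitative smoothing estimates for the combinatorial growth. A secondary point is to verify that the hypotheses of the Gelfand--Shilov uncertainty principle are met in the variable-density (weakly thick) regime --- this is where the confinement of $f$ to $\{|x|\lesssim\lambda^{1/(2k)}\}$ and the bound $\rho\lesssim\lambda^{\delta/(2k)}$ on that ball are used --- and that for $k=m=1$ and for $m=1$ one recovers the constants of \cite[Theorem~2.1]{kkj} and \cite[Corollary~1.6]{Miller0}.
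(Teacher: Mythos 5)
Your Step~2 contains a genuine gap for part \textit{(i)}: the passage ``the uncertainty principle then produces a constant governed by the product of the two scales $\lambda^{\delta/(2k)}$ and $\lambda^{1/(2m)}$'' is exactly the statement that has to be proved, and it does not follow from the uncertainty principles actually available here. The quantitative Gelfand--Shilov uncertainty principle recalled in the paper (Theorem~\ref{specific_GS_uncertaintyprinciple}, i.e.\ \cite[Theorem~2.3]{Martin}) takes as input seminorm bounds $\|\langle x\rangle^{p}\partial_x^{\beta}f\|_{L^2}\leq A^{p+|\beta|}(p!)^{\nu}(|\beta|!)^{\mu}\,M$ and returns a constant $e^{K(1-\log\eps+\log A)e^{KA^{2}}}$, i.e.\ \emph{doubly} exponential in the seminorm constant $A$. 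Your Bernstein estimate is normalized with the critical factorial indices $\frac{m}{k+m},\frac{k}{k+m}$ and with per-derivative powers of $\lambda$, so to fit it into that theorem you must take $A\asymp\lambda^{\max(1/(2k),1/(2m))}$, which produces a constant of size $\exp\!\big(Ke^{K\lambda^{\max(1/k,1/m)}}\big)$ and destroys the claimed bound; no Logvinenko--Sereda/Kovrijkine-type result with explicit dependence on a ``band-width'' scale is proved or cited in your sketch to replace it. The paper avoids this precisely by tuning the \emph{fractional index}: it proves Proposition~\ref{bernstein_estim} for every $0<s\leq\frac12(\frac1k+\frac1m)$, with $\lambda$-independent constants $C^{1+p+|\beta|}$, factorial exponents $(p!)^{1/(2sk)}(|\beta|!)^{1/(2sm)}$, and a single global factor $e^{\eta'\lambda^{s}}$; choosing $s=\frac12(\frac{\delta}{k}+\frac1m)$ makes $\mu=\frac{1}{2sm}$, $\nu=\frac{1}{2sk}$ satisfy $\delta=\frac{1-\mu}{\nu}$, so Theorem~\ref{specific_GS_uncertaintyprinciple} applies with a fixed $A=C$, and the $\lambda$-dependence enters only through the choice $\eps\asymp e^{-2\eta'\lambda^{s}}$, hence only linearly (via $-\log\eps$) in the exponent. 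This $s$-tuning is the missing idea; without it your part \textit{(i)} is not established (note also that, contrary to your concern, a leftover fixed power of $\lambda$ in the Bernstein estimate would be harmless in this scheme, contributing only $O(\log\lambda)\ll\lambda^{s}$ through $-\log\eps$).

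Two further points. In Step~1 your normalization is not obtained the way you describe: writing $f=e^{-tH_{k,m}}h$ (i.e.\ $s=1$) with $t=1/\lambda$ and inserting Theorem~\ref{smoothing_Shubin} yields per-derivative powers $\lambda^{\frac{m|\alpha|}{k+m}}$ and $\lambda^{\frac{k|\beta|}{k+m}}$, not $\lambda^{\frac{|\alpha|}{2k}}$, $\lambda^{\frac{|\beta|}{2m}}$, unless $k=m$; to get your scales you would need the fractional semigroup at $s=s^{*}$ with $t\asymp\lambda^{-s^{*}}$ (the paper instead keeps $t=t_{s}$ fixed and accepts the factor $e^{t_{s}\lambda^{s}}$). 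For part \textit{(ii)} your idea (confinement to a ball of radius $\asymp\lambda^{1/(2k)}$ and a thickness parameter degenerating like $\lambda^{-d/(2k)}$, whence the $|\log\lambda|$) is indeed the paper's mechanism, but it rests on a precise local estimate for functions with Gevrey bounds $\|\partial^{\beta}g\|_{L^\infty(B)}\leq A^{|\beta|}(|\beta|!)^{s}$, $s<1$, with constant $(K/\gamma)^{K(1-\log t+(AR)^{1/(1-s)})}$ (Proposition~\ref{NSV_example}, from \cite[Example~11]{Martin}), together with the weighted bound \eqref{norm2bis} to discard the exterior of the ball and a lower bound on $\|g\|_{L^{\infty}}$; these ingredients are asserted rather than supplied in your sketch.
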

\medskip
Theorem~\ref{spec_ineq} establishes unique continuation properties by providing an explicit upper bound on the quantity
\begin{equation}\label{localisation_cost}
C_{\lambda}^{k,m}(\omega) = \sup_{f \in \mathcal{E}_{\lambda}^{k,m} \setminus \{0\}} \frac{\| f \|_{L^2(\rr^d)}}{\| f \|_{L^2(\omega)}}.
\end{equation}
This upper bound naturally depends on $k$, $m$ and the measurable subset $\omega$. In the case when $\omega$ is $\delta$-weakly thick for some $0 \leq \delta \leq 1$, the growth of this upper bound with respect to the energy level $\lambda$ is explicit and highlights a competition between $\delta$ and the parameter $k$. In the limit case $\delta=1$, this growth is ruled by the quantity $\frac{1}{2}(\frac{1}{k}+\frac{1}{m})$ which is strictly smaller than $1$ if and only if $(k,m) \neq (1,1)$. On the other hand, when $\omega$ is only a nonzero Lebesgue measure subset, this growth is comparable to the first case with $\delta=1$ but needs to be slightly modified by a factor $|\log \lambda|$. 

Notice that the results of \cite[Theorem~2.1]{kkj} and Theorem~\ref{miller_thm} are contained in the Theorem~\ref{spec_ineq}. As it is explained in the Section~\ref{thick_section}, the $1$-weak thick condition is equivalent to \eqref{1weak_kkj}, which is clearly satisfied by open cones. Moreover, this result covers more general geometric conditions, since the subset $\omega$ is not necessarly open contrary to Theorem~\ref{miller_thm} and deals with general anisotropic Shubin operators.  Let us mention that in the work \cite{Miller0}, the author wonders whether the inequalities given by Theorem~\ref{miller_thm} hold true for bounded open subsets with $\lambda^{\frac{1}{2}(1+\frac{1}{k})} \log \lambda$ instead of $\lambda^{\frac{1}{2}(1+\frac{1}{k})}$. Theorem~\ref{spec_ineq} assertion \textit{(ii)} positively answers this question.

The proof of Theorem~\ref{spec_ineq} is given in Section~\ref{proof_spec_ineq} and is based on Bernstein type estimates and on uncertainty principles holding in Gelfand-Shilov spaces established by the author in \cite{Martin}. The definition and some facts about Gelfand-Shilov spaces are recalled in Appendix~\ref{gelfand}. As an application of these spectral inequalities, we derive new null-controllability results for evolution equations associated to these operators in the next section.

\subsection{Applications to the null-controllability}\label{control_result}
\label{control_result}
In this section, we present null-controllability results for evolution equations associated to fractional anisotropic Shubin operators
\begin{equation*}\label{PDE_shubin0}
\left\lbrace \begin{array}{ll}
\partial_tf(t,x) + \big((-\Delta_x)^m+|x|^{2k}\big)^s f(t,x)=u(t,x)\un_{\omega}(x), \quad &  x \in \mathbb{R}^d, \ t>0, \\
f|_{t=0}=f_0 \in L^2(\rr^d),                                       &  
\end{array}\right.
\end{equation*}
where $s>0$ is a positive real parameter and $k, m \in \nn^*$ are positive integers.
Fractional Shubin operators are defined as follow
\begin{equation*}\label{frac_Shubin}
\forall g \in D(H_{k,m}^s), \quad H_{k,m}^s g= \sum_{n=0}^{+\infty} (\lambda_n^{k,m})^s \langle g, \psi^{k,m}_n \rangle_{L^2(\rr^d)} \psi^{k,m}_n,
\end{equation*}
equipped with the domain
\begin{equation*}
D(H_{k,m}^s)= \big\{g \in L^2(\rr^d): \ \sum_{n=0}^{+\infty} (\lambda_n^{k,m})^{2s} |\langle g, \psi^{k,m}_n \rangle_{L^2(\rr^d)}|^2 <+\infty \big\}.
\end{equation*}
These operators then generate a strongly continuous contraction semi-group on $L^2(\rr^d)$ given by 
\begin{equation*}\label{semigroup_Shubin}
\forall g \in L^2(\rr^d), \forall t \geq 0, \quad e^{-t H^s_{k,m}}g = \sum_{n=0}^{+\infty} e^{-t(\lambda_n^{k,m})^s} \langle g, \psi^{k,m}_n \rangle_{L^2(\rr^d)} \psi^{k,m}_n.
\end{equation*}

The notion of null-controllability is defined as follows:

\medskip

\begin{definition} [Null-controllability] Let $P$ be a closed operator on $L^2(\rr^d)$, which is the infinitesimal generator of a strongly continuous semigroup $(e^{-tP})_{t \geq 0}$ on $L^2(\rr^d)$, $T>0$ and $\omega$ be a measurable subset of $\mathbb{R}^d$. 
The evolution equation 
\begin{equation}\label{syst_general}
\left\lbrace \begin{array}{ll}
(\partial_t + P)f(t,x)=u(t,x)\un_{\omega}(x), \quad &  x \in \mathbb{R}^d,\ t>0, \\
f|_{t=0}=f_0 \in L^2(\rr^d),                                       &  
\end{array}\right.
\end{equation}
is said to be {\em null-controllable from the set $\omega$ in time} $T>0$ if, for any initial datum $f_0 \in L^{2}(\mathbb{R}^d)$, there exists a control function $u \in L^2((0,T)\times\mathbb{R}^d)$ supported in $(0,T)\times\omega$, such that the mild \emph{(}or semigroup\emph{)} solution of \eqref{syst_general} satisfies $f(T,\cdot)=0$.
\end{definition}

\medskip

 From the spectral inequalities stated by Theorem~\ref{spec_ineq}, we deduce the following null-controllability result:
 \medskip
\begin{theorem}\label{control_shubin_hd}
Let $k,m \geq 1$ be positive integers and $s >\frac{1}{2}(\frac{1}{k}+\frac{1}{m})$. Let $\omega \subset \rr^d$ be a measurable subset of positive Lebesgue measure $|\omega|>0$. The evolution system 
\begin{equation*}\label{PDE_shubin}
\left\lbrace \begin{array}{ll}
\partial_tf(t,x) + \big((-\Delta_x)^m+|x|^{2k}\big)^s f(t,x)=u(t,x)\un_{\omega}(x), \quad &  x \in \mathbb{R}^d, \ t>0, \\
f|_{t=0}=f_0 \in L^2(\rr^d),                                       &  
\end{array}\right.
\end{equation*}
is null-controllable from $\omega$ at any time $T>0$.
\end{theorem}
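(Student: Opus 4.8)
The plan is to derive the statement from the spectral inequality of Theorem~\ref{spec_ineq}~\textit{(ii)} and from the dissipation of the semigroup $(e^{-tH_{k,m}^s})_{t \geq 0}$, fed into the adapted Lebeau--Robbiano method of Theorem~\ref{Meta_thm_AdaptedLRmethod} (Appendix~\ref{control_appendix}). Write $\theta = \frac{1}{2}\big(\frac{1}{k}+\frac{1}{m}\big)$, so that the hypothesis reads $s > \theta$, and for $\mu > 0$ let $\Pi_\mu$ be the orthogonal projection of $L^2(\rr^d)$ with $\range \Pi_\mu = \text{Span}_{\cc}\{\psi_n^{k,m} : (\lambda_n^{k,m})^s \leq \mu\}$, i.e.\ the spectral projection of $H_{k,m}^s$ associated with its eigenvalues $\leq \mu$. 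Since $s>0$ and the $\lambda_n^{k,m}$ are positive, one has $\range \Pi_\mu = \mathcal{E}^{k,m}_{\mu^{1/s}}$, which is the natural frequency cutoff adapted to the evolution equation.

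First I would record the dissipation estimate. On $\range(1-\Pi_\mu)$ the operator $H_{k,m}^s$ has spectrum contained in $(\mu,+\infty)$, so the spectral expansion of the semigroup yields
\begin{equation*}
\forall \mu > 0,\ \forall t \geq 0,\ \forall g \in L^2(\rr^d), \quad \big\|(1-\Pi_\mu)\,e^{-tH_{k,m}^s}g\big\|_{L^2(\rr^d)} \leq e^{-t\mu}\,\|g\|_{L^2(\rr^d)},
\end{equation*}
which is exactly the dissipation estimate required by the method, with rate $\mu$ in the energy variable.

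Next I would rewrite the spectral inequality in the variable $\mu$. Applying Theorem~\ref{spec_ineq}~\textit{(ii)} with $\lambda = \mu^{1/s}$ provides $K = K(\omega,k,m)>0$ such that every $f \in \range \Pi_\mu = \mathcal{E}^{k,m}_{\mu^{1/s}}$ satisfies $\|f\|_{L^2(\rr^d)} \leq K\exp\!\big(\tfrac{K}{s}\,\mu^{\theta/s}\,|\log\mu|\big)\|f\|_{L^2(\omega)}$. Since $s > \theta$, we have $\theta/s < 1$, so one may fix $\gamma$ with $\theta/s < \gamma < 1$ and use the elementary bound $\mu^{\theta/s}|\log\mu| \leq C_\gamma\,\mu^{\gamma}$ valid for $\mu \geq 1$; this upgrades the estimate to $\|f\|_{L^2(\rr^d)} \leq K' e^{K'\mu^{\gamma}}\|f\|_{L^2(\omega)}$ for all $\mu \geq 1$ and $f \in \range \Pi_\mu$, with $0 < \gamma < 1$. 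The regime $0 < \mu < 1$ only involves a fixed finite-dimensional space (spanned by the $\psi_n^{k,m}$ with $\lambda_n^{k,m}\leq 1$) on which the inequality holds with a fixed constant, so nothing is lost there.

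Finally, the semigroup $(e^{-tH_{k,m}^s})_{t\geq 0}$ being a strongly continuous contraction semigroup on $L^2(\rr^d)$, and the spectral-inequality exponent $\gamma$ lying in $(0,1)$, Theorem~\ref{Meta_thm_AdaptedLRmethod} applies and gives the null-controllability of the system from $\omega$ in any time $T>0$. I expect the only genuinely delicate point to be the absorption of the logarithmic factor: it is precisely the \emph{strict} inequality $s > \frac{1}{2}(\frac{1}{k}+\frac{1}{m})$ in the hypothesis that leaves room to absorb $|\log\mu|$ while keeping $\gamma<1$. Beyond this bookkeeping the argument is a direct invocation of the abstract Lebeau--Robbiano machinery, all the substantive analytic content being already contained in Theorem~\ref{spec_ineq}.
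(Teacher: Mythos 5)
Your proposal is correct and follows essentially the same route as the paper: the spectral inequality of Theorem~\ref{spec_ineq}~\textit{(ii)}, absorption of the logarithmic factor into a power strictly between $\frac{1}{2}(\frac{1}{k}+\frac{1}{m})$ and $s$ (which is exactly where the strict inequality on $s$ is used in the paper as well), the exponential dissipation on the range of $1-\Pi_\mu$, and the adapted Lebeau--Robbiano theorem of the appendix. Your only deviation is the harmless reparametrization $\mu=\lambda^{s}$ of the spectral projections (the paper keeps the variable $\lambda$ and takes $a=s'<b=s$, which coincides with your choice $a=\gamma<b=1$ after rescaling), together with the implicit use of the Hilbert Uniqueness Method to pass from the observability estimate furnished by Theorem~\ref{Meta_thm_AdaptedLRmethod} to null-controllability, a step the paper makes explicit.
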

\medskip
Theorem~\ref{control_shubin_hd} ensures that evolution equations associated to fractional Shubin operators $H^s_{k,m}$ are always null-controllable as soon as $\omega$ is a nonzero Lebesgue measure subset and $s$ is strictly greater than a critical diffusion $s^*=\frac{1}{2}(\frac{1}{k}+\frac{1}{m})$. In particular, when the couple $(k,m) \neq (1,1)$, then $0\leq s^* <1$ and the evolution equation associated to $H_{k,m}$ is null-controllable at any time $T>0$ from $\omega \subset \rr^d$ as soon as $|\omega|>0$. 
%To our knowledge, when $(k,m)\neq (1,1)$, the only null-controllable results about these equations are given by Miller in \cite[Theorem~1.3]{Miller0} where the author controls anharmonic heat equations from cones and by Alphonse in \cite[Theorem]{Alphonse} where null-controllability results from weakly thick sets are given for evolution equations associated to symmetric Shubin operators of the form $H_{l,l}$, with $l \in \nn$. Theorem~\ref{control_shubin_hd} constitutes therefore a great improvement. 
This critical index $s^*$ turns out to be equal to $1$ if and only if $k=m=1$. In this case, the operator associated is the harmonic oscillator and Theorem~\ref{control_shubin_hd} provides that evolution equations associated to fractional harmonic oscillators $\mathcal{H}^s$, with $s>1$, is null-controllable at any time $T>0$ from any positive Lebesgue measure subset. This particular case was already proved by Alphonse in \cite[Theorem~2.9]{Alphonse}. 

The proof of Theorem~\ref{control_shubin_hd} is given in Section~\ref{proof_cont_shubin}. It consists in applying an adapted Lebeau-Robbiano method (Theorem~\ref{Meta_thm_AdaptedLRmethod}) together with the spectral inequalities given by Theorem~\ref{spec_ineq}.

Regarding the case when $s$ is smaller than the critical index $s^*$, spectral inequalities can be used to deduce the following null-controllability results:
\medskip
\begin{theorem}\label{control_shubin_sd}
Let $k,m \geq 1$ be positive integers, $\frac{1}{2m} < s \leq \frac{1}{2}(\frac{1}{k}+\frac{1}{m})$ and $0\leq \delta < \frac{k}{m}(2sm-1)$. If the measurable subset $\omega \subset \rr^d$ is $\delta$-weakly thick, then the evolution system 
\begin{equation*}\label{PDE_shubin}
\left\lbrace \begin{array}{ll}
\partial_tf(t,x) + \big((-\Delta_x)^m+|x|^{2k}\big)^s f(t,x)=u(t,x)\un_{\omega}(x), \quad &  x \in \mathbb{R}^d, \ t>0, \\
f|_{t=0}=f_0 \in L^2(\rr^d),                                       &  
\end{array}\right.
\end{equation*}
is null-controllable from $\omega$ at any time $T>0$.
\end{theorem}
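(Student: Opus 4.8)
The plan is to combine the Lebeau--Robbiano method in the form of Theorem~\ref{Meta_thm_AdaptedLRmethod} with the spectral inequality of Theorem~\ref{spec_ineq}~\textit{(i)} and the dissipation estimate for the semigroup $(e^{-tH^s_{k,m}})_{t \geq 0}$. First, I would record the two ingredients that the adapted Lebeau--Robbiano method requires. On the one hand, since $\omega$ is $\delta$-weakly thick with $0 \leq \delta \leq 1$ (note $\delta < \tfrac{k}{m}(2sm-1) \leq \tfrac{k}{m}\cdot\tfrac{k}{m}\cdot\ldots$, and in any case $\delta \leq 1$ is within the range of Theorem~\ref{spec_ineq}), assertion~\textit{(i)} yields a constant $K = K(\gamma,R,\delta,k,m) > 0$ with
\begin{equation*}
\forall \mu > 0, \ \forall f \in \mathcal{E}^{k,m}_{\mu}, \quad \|f\|_{L^2(\rr^d)} \leq K e^{K \mu^{\frac{1}{2}(\frac{\delta}{k}+\frac{1}{m})}} \|f\|_{L^2(\omega)}.
\end{equation*}
This is a spectral inequality of the type required in Theorem~\ref{Meta_thm_AdaptedLRmethod} with rate exponent $a = \tfrac{1}{2}(\tfrac{\delta}{k}+\tfrac{1}{m})$ for the projectors onto $\mathcal{E}^{k,m}_{\mu}$. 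On the other hand, the dissipation estimate is immediate from the spectral definition of $e^{-tH^s_{k,m}}$: for $g \in L^2(\rr^d)$ with $\langle g, \psi^{k,m}_n\rangle = 0$ whenever $\lambda^{k,m}_n \leq \mu$, one has $\|e^{-tH^s_{k,m}}g\|_{L^2} \leq e^{-t\mu^s}\|g\|_{L^2}$, i.e. a dissipation rate governed by the exponent $b = s$.

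The key point is then the comparison of exponents. The abstract Lebeau--Robbiano scheme produces null-controllability provided the spectral-inequality exponent is strictly smaller than the dissipation exponent, that is provided $a < b$, i.e.
\begin{equation*}
\frac{1}{2}\Big(\frac{\delta}{k}+\frac{1}{m}\Big) < s.
\end{equation*}
Rearranging, this is exactly $\tfrac{\delta}{k} < 2s - \tfrac{1}{m} = \tfrac{2sm-1}{m}$, i.e. $\delta < \tfrac{k}{m}(2sm-1)$, which is precisely the hypothesis of the theorem. The lower bound $s > \tfrac{1}{2m}$ guarantees that $2sm - 1 > 0$, so that the admissible range $0 \leq \delta < \tfrac{k}{m}(2sm-1)$ is nonempty; and the upper bound $s \leq \tfrac{1}{2}(\tfrac{1}{k}+\tfrac{1}{m})$ just places us in the regime not already covered by Theorem~\ref{control_shubin_hd}, so there is nothing further to check there. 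Thus I would invoke Theorem~\ref{Meta_thm_AdaptedLRmethod} with $P = H^s_{k,m}$, the spectral subspaces $\mathcal{E}^{k,m}_{\mu}$, the above two estimates, and conclude null-controllability from $\omega$ at any time $T > 0$.

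I do not expect a genuine obstacle here: the result is a direct corollary of Theorem~\ref{spec_ineq} and the Lebeau--Robbiano machinery, and the proof is essentially the verification that the hypothesis $\delta < \tfrac{k}{m}(2sm-1)$ translates exactly into the exponent inequality $a < b$. The only mild care needed is to confirm that the growth of the control cost produced by the method (an integral against the resolvent-type weight, with the constant $K e^{K\mu^a}$ and the exponential decay $e^{-t\mu^s}$) converges, which is automatic once $a < s$; and to check the measurability/admissibility conditions on $\omega$ demanded by Theorem~\ref{Meta_thm_AdaptedLRmethod}, which hold since $\omega$ is a measurable subset and $e^{-tH^s_{k,m}}$ is a strongly continuous contraction semigroup on $L^2(\rr^d)$. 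One should also remark that when $\delta$ is taken in $[0,1]$ the hypothesis forces $\tfrac{k}{m}(2sm-1) > 0$ and is compatible with $\delta \leq 1$, so the invocation of Theorem~\ref{spec_ineq}~\textit{(i)} is legitimate for the full admissible range of $\delta$.
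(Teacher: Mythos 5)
Your argument is correct: with $\pi_{\lambda}$ the orthogonal projection onto $\mathcal{E}^{k,m}_{\lambda}$, Theorem~\ref{spec_ineq}~\textit{(i)} gives the spectral inequality with exponent $a=\frac{1}{2}(\frac{\delta}{k}+\frac{1}{m})$, the spectral decomposition of $e^{-tH^s_{k,m}}$ gives the dissipation estimate $\|(1-\pi_{\lambda})e^{-tH^s_{k,m}}g\|_{L^2(\rr^d)}\leq e^{-t\lambda^s}\|g\|_{L^2(\rr^d)}$ with exponent $b=s$, and the hypothesis $\delta<\frac{k}{m}(2sm-1)$ is exactly the strict inequality $a<b$ needed in Theorem~\ref{Meta_thm_AdaptedLRmethod}; the observability estimate and the Hilbert Uniqueness Method then give null-controllability in any time $T>0$, precisely as in the proof of Theorem~\ref{control_shubin_hd}. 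Note, however, that the paper itself does not prove this statement by this route: it omits the proof and refers to \cite{Martin} (Corollary~2.12), where the result is obtained by a different strategy, namely quantitative uncertainty principles in Gelfand--Shilov spaces which yield observability estimates directly for evolution equations enjoying Gelfand--Shilov smoothing effects, without passing through spectral projections. Your derivation has the merit of being self-contained within the present paper (it reuses Theorem~\ref{spec_ineq} and the adapted Lebeau--Robbiano method already employed in Section~\ref{proof_cont_shubin}), whereas the approach of \cite{Martin} bypasses the Lebeau--Robbiano iteration altogether and is formulated for a more general abstract class of smoothing semigroups and densities. One small cleanup: your justification that Theorem~\ref{spec_ineq}~\textit{(i)} is applicable (the garbled chain ``$\delta<\frac{k}{m}(2sm-1)\leq\frac{k}{m}\cdot\frac{k}{m}\cdots$'') should simply read that $s\leq\frac{1}{2}(\frac{1}{k}+\frac{1}{m})$ gives $2sm-1\leq\frac{m}{k}$, hence $\frac{k}{m}(2sm-1)\leq 1$ and therefore $\delta<1$, so the weak thickness parameter is indeed in the admissible range $0\leq\delta\leq 1$.
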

\medskip
Theorem~\ref{control_shubin_sd} provides non-trivial efficient measurable subsets for the null-controllability of evolution equations associated to fractional Shubin operators. Actually, this Theorem was already proved by the author in \cite[Corollary~2.12]{Martin} and therefore the proof of Theorem~\ref{control_shubin_sd} is omitted in this work. The strategy developped by the author in \cite{Martin} consists in establishing quantitative uncertainty principles in Gelfand-Shilov spaces (see \cite[Theorem~2.2 and 2.3]{Martin}) in order to derive observability estimates for abstract evolution equations enjoying Gelfand-Shilov smoothing effects.
\section{The thickness property}\label{thick_section}
This section is devoted to present some facts about the weak thickness property. 
\subsection{Some properties of the weak thickness condition}
The first result establishes that the $\delta$-weak thickness property is non-decreasing with respect to the parameter $\delta$. 
\medskip
\begin{proposition}\label{increas}
Let $0 \leq \delta \leq \delta' \leq 1$ and $\omega \subset \rr^d$ be a measurable subset. If $\omega$ is $\delta$-weakly thick, then it is $\delta'$-weakly thick.
\end{proposition}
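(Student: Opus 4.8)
The plan is to show that $\delta$-weak thickness of $\omega$ directly implies $\delta'$-weak thickness by exhibiting suitable constants $\gamma', R'$ for the density $\rho'(x) = R'\langle x\rangle^{\delta'}$. Suppose $\omega$ is $\gamma$-thick with respect to $\rho(x) = R\langle x\rangle^{\delta}$ for some $0 < \gamma \leq 1$ and $R > 0$. Since $\langle x\rangle \geq 1$ for all $x \in \rr^d$ and $\delta \leq \delta'$, we have $\langle x\rangle^{\delta} \leq \langle x\rangle^{\delta'}$, hence $\rho(x) \leq \rho'(x)$ once we take $R' = R$; thus the ball $B(x,\rho(x))$ is contained in the larger ball $B(x,\rho'(x))$. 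The subtlety is that $\gamma$-thickness with respect to a smaller density does not immediately give $\gamma$-thickness with respect to a larger one at the \emph{same} point $x$, because $B(x,\rho(x))$ could be a tiny portion of $B(x,\rho'(x))$; instead one must cover the larger ball by balls of the smaller radius.

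First I would fix $x \in \rr^d$ and estimate $|\omega \cap B(x,\rho'(x))|$ from below. The idea is to cover $B(x,\rho'(x))$ by a controlled number of balls $B(x_j, \rho(x_j))$ with centers $x_j \in B(x,\rho'(x))$. On $B(x,\rho'(x))$ the density $\rho$ is comparable to $\rho(x)$: since $|x_j - x| \leq \rho'(x) = R\langle x\rangle^{\delta'}$, the triangle inequality and elementary estimates on $\langle\cdot\rangle$ give $\langle x_j\rangle \leq C_1\langle x\rangle^{\delta'/\delta}$ or, more usefully, one can bound $\langle x_j\rangle \asymp \langle x\rangle$ when $\delta' < 1$ is not too large — actually the cleanest route is to note $\rho(x_j) \geq c\,\rho'(x)^{\delta/\delta'} \geq$ some fixed fraction depending only on $R, \delta, \delta'$ times a quantity, but to keep things simple I would instead argue that $\langle x_j\rangle \leq \langle x\rangle + |x_j - x| \leq \langle x\rangle(1 + R\langle x\rangle^{\delta'-1}) \leq (1+R)\langle x\rangle$ since $\delta' \leq 1$, and similarly $\langle x_j\rangle \geq c\langle x\rangle$. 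Hence $\rho(x_j) = R\langle x_j\rangle^{\delta} \geq R c^{\delta}\langle x\rangle^{\delta}$, which is a fixed fraction of $\rho(x)$ but could be much smaller than $\rho'(x) = R\langle x\rangle^{\delta'}$.

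Therefore I would cover $B(x,\rho'(x))$ by $N$ balls of radius $r := Rc^{\delta}\langle x\rangle^{\delta}$ centered in $B(x,\rho'(x))$, where by a standard volume/packing argument $N \leq C_d (\rho'(x)/r)^d = C_d (\langle x\rangle^{\delta'-\delta}/c^{\delta})^d$ — but this is \emph{not} uniformly bounded in $x$ unless $\delta = \delta'$, which would break the argument. The fix, and the point I expect to be the main obstacle, is to be smarter: rather than covering with the smallest guaranteed radius at a point, one should choose the covering balls so that at each center $x_j$ the actual radius $\rho(x_j)$ is used, and observe that $\gamma$-thickness gives $|\omega \cap B(x_j,\rho(x_j))| \geq \gamma|B(x_j,\rho(x_j))|$; summing over a Besicovitch-type subcover of $B(x,\rho'(x))$ by such balls (whose multiplicity is bounded by a dimensional constant), and using that the total volume of the subcover is at least $|B(x,\rho'(x))|$, yields $|\omega \cap B(x,\rho'(x))| \geq \frac{\gamma}{C_d}|B(x,\rho'(x))|$. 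This gives the desired $\gamma'$-thickness with $\gamma' = \gamma/C_d$ and $R' = R$, completing the proof; the careful bookkeeping with the Besicovitch covering lemma (ensuring the balls $B(x_j,\rho(x_j))$ genuinely cover $B(x,\rho'(x))$ and have bounded overlap) is the only delicate step, and one should note that continuity/measurability of $\rho$ is automatic here since $\rho$ is an explicit power of $\langle\cdot\rangle$.
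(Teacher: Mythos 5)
Your proposal follows the same overall strategy as the paper's proof: cover the large ball $B(x,R\langle x\rangle^{\delta'})$ by balls of the smaller density $y\mapsto R\langle y\rangle^{\delta}$ centered at points of that ball, invoke a covering lemma to control overlaps, and sum the thickness estimates over the selected subfamily. The implementation differs in two places. You appeal to a Besicovitch-type bounded-multiplicity cover, whereas the paper extracts a finite subcover of the closed ball by compactness and applies the finite Vitali covering lemma (pairwise disjoint balls whose triples still cover); both work, Vitali being the more elementary tool. To control the radii of the covering balls you use the $1$-Lipschitz bound $\langle x_j\rangle\leq\langle x\rangle+|x_j-x|\leq(1+R)\langle x\rangle$ together with $\delta\leq\delta'\leq 1$, whereas the paper restricts the centers to those with $\langle y\rangle^{\delta}\leq 3\langle x\rangle^{\delta'}$ via an intermediate-value argument along the segment $[x,y]$; your route to this control is arguably simpler. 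Two small corrections. First, the unboundedness of the \emph{number} $N$ of covering balls is irrelevant --- only the overlap (or disjointness) constant enters the measure estimate --- so the digression presenting this as a potential failure is a red herring rather than an obstacle; likewise the claimed lower bound $\langle x_j\rangle\geq c\langle x\rangle$ is false in general (take $\delta'=1$, $R$ large) but is never used in your final argument. Second, your concluding inequality cannot keep the same ball $B(x,\rho'(x))$ on the left-hand side, since the covering balls $B(x_j,\rho(x_j))$ protrude from it; what the summation actually yields is
\begin{equation*}
|\omega\cap B(x,CR\langle x\rangle^{\delta'})|\geq \frac{\gamma}{C_d\,C^d}\,|B(x,CR\langle x\rangle^{\delta'})|,\qquad C=1+(1+R)^{\delta},
\end{equation*}
exactly as the paper concludes with the enlarged ball $B(x,6R\langle x\rangle^{\delta'})$. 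Since the definition of $\delta'$-weak thickness allows an arbitrary radius constant $R'$, this is harmless and your argument goes through.
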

\medskip
\begin{proof}
Let us assume that $\omega$ is $\delta$-weakly thick. By definition, there exist $R>0$ and $0<\gamma \leq 1$ such that
\begin{equation}\label{thickdelta}
\forall x \in \rr^d, \quad \big|\omega \cap B\big(x, R \langle x \rangle^{\delta}\big)\big| \geq \gamma \big|B\big(x, R \langle x \rangle^{\delta}\big)\big|.
\end{equation}
Let $x \in \rr^d$. Let us first notice that
\begin{equation*}
\overline{B\big(x, 3 R \langle x \rangle^{\delta'}\big)} \subset \bigcup_{\substack{y \in \overline{B\big(x, 3 R \langle x \rangle^{\delta'}\big)}, \\ \langle y \rangle^{\delta} \leq 3 \langle x \rangle^{\delta'}}} B\big(y,  R \langle y \rangle^{\delta}\big)
\end{equation*}
Indeed, if $y \in \overline{B(x,3 R\langle x \rangle^{\delta'})}$ and $\langle y \rangle^{\delta} > 3 \langle x \rangle^{\delta'}$, then the continuous function defined for all $t \in [0,1]$ by $f(t)= \langle ty+(1-t)x \rangle^{\delta}$ satisfies $f(0)=\langle x\rangle^{\delta} \leq \langle x \rangle^{\delta'}$ and $f(1)=\langle y \rangle^{\delta} >3 \langle x \rangle^{\delta'}$. It follows that there exists $0 < t_0 < 1$ such that $\langle z\rangle^{\delta}=3 \langle x \rangle^{\delta'}$ with $z= t_0 y+ (1-t_0)x \in B\big(x,3 R \langle x\rangle^{\delta'}\big)$ and $y \in B\big(z,  R \langle z \rangle^{\delta}\big)$, as 
$$\|z-x\|=t_0\|x-y\| <3 R \langle x \rangle^{\delta'}, \qquad \|y-z\|=(1-t_0)\|x-y\| <3 R \langle x \rangle^{\delta'}=  R \langle z\rangle^{\delta}.$$
It follows that there exists a finite sequence $(x_{i_k})_{0 \leq k\leq N}$ of $\overline{B\big(x,3 R \langle x\rangle^{\delta'}\big)}$ such that 
\begin{equation}\label{recov30}
\overline{B\big(x,3 R \langle x\rangle^{\delta'}\big)} \subset \bigcup \limits_{k=0}^N B\big(x_{i_k},  R \langle x_{i_k} \rangle^{\delta}\big)\quad  \text{ and }\quad  \forall 0 \leq k \leq N, \quad \langle x_{i_k}\rangle^{\delta} \leq 3 \langle x\rangle^{\delta'}.
\end{equation} 
We can now use the following covering lemma~\cite{rudin} (Lemma~7.3):

\begin{lemma}[Vitali covering lemma] \label{rudin_recov}
Let $(y_i)_{0\leq i \leq N}$ be a finite sequence of $\rr^d$ and $(r_i)_{0 \leq i \leq N} \subset (0,+\infty)^{N+1}$. There exists a subset $S \subset \{0,...,N\}$ such that
\begin{itemize}
\item[$(i)$] The balls $(B(y_i, r_i))_{i \in S}$ are two by two disjoint 
\item[$(ii)$] $\bigcup \limits_{i=0}^N B(y_i, r_i) \subset \bigcup \limits_{i \in S} B(y_i, 3 r_i)$
\end{itemize}
\end{lemma}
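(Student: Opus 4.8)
The plan is to prove the statement by a greedy selection procedure, choosing balls in decreasing order of radius. First I would relabel the indices so that $r_{i_0} \geq r_{i_1} \geq \dots \geq r_{i_N}$, which is possible since the family is finite; this ordering is the whole point, as it forces every discarded ball to be comparable to an already-chosen one. I then build the subset $S$ inductively: put $i_0 \in S$; having decided the status of $i_0, \dots, i_{j-1}$, put $i_j \in S$ if $B(y_{i_j}, r_{i_j})$ is disjoint from every ball $B(y_i, r_i)$ with $i \in S$ already selected, and discard $i_j$ otherwise. By construction the balls $(B(y_i, r_i))_{i \in S}$ are pairwise disjoint, which gives $(i)$.

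For $(ii)$, I would argue that each discarded ball is contained in the triple of some selected ball. Fix $j \notin S$. By the selection rule, $B(y_{i_j}, r_{i_j})$ meets some $B(y_i, r_i)$ with $i \in S$ and $i = i_\ell$ for some $\ell < j$; pick such an $i$. Since $\ell < j$ we have $r_i \geq r_{i_j}$ by the decreasing ordering. If $z \in B(y_{i_j}, r_{i_j})$, then using a point $w$ in the intersection of the two balls and the triangle inequality,
\begin{equation*}
\|z - y_i\| \leq \|z - y_{i_j}\| + \|y_{i_j} - w\| + \|w - y_i\| < r_{i_j} + r_{i_j} + r_i \leq 3 r_i,
\end{equation*}
so $B(y_{i_j}, r_{i_j}) \subset B(y_i, 3 r_i)$. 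Hence every ball of the original family (whether in $S$ or not) lies inside $\bigcup_{i \in S} B(y_i, 3 r_i)$, which is exactly $(ii)$.

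There is essentially no serious obstacle here: the argument is the textbook greedy proof of the finite Vitali lemma, and the only thing to be careful about is the bookkeeping of the ordering (ensuring that a discarded ball always meets a ball that was selected \emph{earlier}, hence has radius at least as large). The finiteness of the index set makes the induction terminate automatically, so no compactness or measure-theoretic input is needed. I would keep the write-up to a few lines, stating the ordering, the greedy rule, and the triangle-inequality estimate above.
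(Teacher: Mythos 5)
Your proof is correct: the greedy selection in decreasing order of radius gives disjointness, and the triangle-inequality estimate $\|z-y_i\|<r_{i_j}+r_{i_j}+r_i\leq 3r_i$ correctly places each discarded ball in the triple of an earlier selected one. The paper does not prove this lemma but simply cites Rudin (Lemma~7.3), whose argument is exactly this textbook greedy proof, so your approach coincides with the intended one.
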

It follows from Lemma~\ref{rudin_recov} and (\ref{recov30}) that there exists a subset $S \subset \{0,...,N\}$ such that the balls $\big(B\big(x_{i_k}, R \langle x_{i_k} \rangle^{\delta}\big)\big)_{k \in S}$ are two by two disjoint and satisfy
\begin{equation}\label{asdf10}
B\big(x,3 R \langle x\rangle^{\delta'}\big) \subset \bigcup \limits_{k \in S} B\big(x_{i_k},3 R \langle x_{i_k}\rangle^{\delta}\big).
\end{equation}
We also notice that 
\begin{equation}\label{asdf11}
\bigsqcup \limits_{k \in S} B\big(x_{i_k}, R \langle x_{i_k} \rangle^{\delta}\big) \subset B\big(x, 6 R \langle x\rangle^{\delta'}\big),
\end{equation}
since, if $y \in B\big(x_{i_k}, R \langle x_{i_k} \rangle^{\delta}\big)$ then
$$\|y-x\| \leq \|y-x_{i_k}\|+\|x_{i_k}-x\| < R \langle x_{i_k} \rangle^{\delta}+ 3 R \langle x \rangle^{\delta'} \leq 6 R \langle x \rangle^{\delta'}.$$
It follows from (\ref{thickdelta}), (\ref{asdf10}) and \eqref{asdf11} that
\begin{align*}
\big|\omega \cap B\big(x, 6 R \langle x \rangle^{\delta'}\big) \big| & \geq \sum_{k \in S} \big|\omega \cap B\big(x_{i_k}, R \langle x_{i_k} \rangle^{\delta}\big) \big| \\
& \geq \gamma \sum_{k \in S} \big|B\big(x_{i_k}, R \langle x_{i_k} \rangle^{\delta}\big) \big| = \frac{\gamma}{3^d} \sum_{k \in S} \big|B\big(x_{i_k},3 R \langle x_{i_k} \rangle^{\delta}\big) \big|\\
& \geq \frac{\gamma}{3^d} \Big| \bigcup_{k \in S} B\big(x_{i_k},3 R \langle x_{i_k} \rangle^{\delta}\big) \Big| \geq \frac{\gamma}{3^d} \big| B\big(x, 3 R \langle x \rangle^{\delta'}\big)\big| = \frac{\gamma}{6^d} \big| B\big(x, 6 R \langle x \rangle^{\delta'}\big)\big|.
\end{align*}
This ends the proof of Proposition~\ref{increas}.

\end{proof}

The following proposition provides a characterization of the $1$-weak thickness property and will be used in the proof of Theorem~\ref{spec_ineq}.
\medskip
\begin{proposition}\label{1weak}
Let $\omega \subset \rr^d$ be a measurable subset. The set $\omega$ is $1$-weakly thick if and only if $$\liminf \limits_{R \to +\infty} \frac{|\omega \cap B(0,R)|}{|B(0,R)|} >0.$$
\end{proposition}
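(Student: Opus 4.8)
The plan is to prove the two implications separately, the forward direction being the delicate one.

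\textit{($\Leftarrow$)} Assume $\liminf_{R \to +\infty} |\omega \cap B(0,R)|/|B(0,R)| = 2c > 0$. Then there exists $R_0 > 0$ such that $|\omega \cap B(0,R)| \geq c |B(0,R)|$ for all $R \geq R_0$. I want to produce constants $\gamma, R > 0$ with $|\omega \cap B(x, R\langle x\rangle)| \geq \gamma |B(x, R\langle x\rangle)|$ for every $x \in \rr^d$. The idea is to choose $R$ large enough (depending on $R_0$) so that $B(x, R\langle x\rangle)$ always \emph{contains} a ball $B(0,R')$ with $R' \geq R_0$ whose volume is a fixed fraction of that of $B(x, R\langle x\rangle)$. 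Concretely, since $|x| \leq \langle x\rangle$, the ball $B(x, R\langle x\rangle)$ contains $B(0, R\langle x\rangle - |x|) \supseteq B(0, (R-1)\langle x\rangle)$. Picking $R \geq R_0 + 1$ ensures $(R-1)\langle x\rangle \geq R_0$, so $|\omega \cap B(x,R\langle x\rangle)| \geq |\omega \cap B(0,(R-1)\langle x\rangle)| \geq c\, |B(0,(R-1)\langle x\rangle)| = c\big(\tfrac{R-1}{R}\big)^d |B(0, R\langle x\rangle)|$. Comparing volumes of the concentric-versus-not balls only costs another dimensional constant: $|B(0, R\langle x\rangle)| = |B(x, R\langle x\rangle)|$. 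Hence $\gamma := c\big(\tfrac{R-1}{R}\big)^d$ works, and $\omega$ is $1$-weakly thick. (A small technical point: one should also absorb the finitely many $x$ with $|x| < R_0$ or handle $R$ slightly differently, but enlarging $R$ covers all cases uniformly.)

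\textit{($\Rightarrow$)} Assume $\omega$ is $1$-weakly thick: there are $\gamma \in (0,1]$ and $R > 0$ with $|\omega \cap B(x, R\langle x\rangle)| \geq \gamma |B(x, R\langle x\rangle)|$ for all $x$. I need a lower bound on $|\omega \cap B(0,\rho)|/|B(0,\rho)|$ for all large $\rho$. The natural approach is a covering argument: fix a large radius $\rho$, cover a large portion of $B(0,\rho)$ by balls $B(x_i, R\langle x_i\rangle)$ with $x_i$ ranging over points of $B(0,\rho)$ (note $\langle x_i\rangle \leq \langle \rho\rangle \lesssim \rho$ for such points, so each covering ball has radius comparable to at most a fixed multiple of $\rho$ and stays inside $B(0, C\rho)$ for a dimensional-free constant $C$), extract via the Vitali covering lemma (Lemma~\ref{rudin_recov}) a disjoint subfamily $B(x_{i_k}, R\langle x_{i_k}\rangle)$, $k \in S$, whose triples still cover $B(0,\rho)$, and then estimate
$$
|\omega \cap B(0, C\rho)| \geq \sum_{k \in S} |\omega \cap B(x_{i_k}, R\langle x_{i_k}\rangle)| \geq \gamma \sum_{k\in S} |B(x_{i_k}, R\langle x_{i_k}\rangle)| \geq \frac{\gamma}{3^d}\Big|\bigcup_{k\in S} B(x_{i_k}, 3R\langle x_{i_k}\rangle)\Big| \geq \frac{\gamma}{3^d}|B(0,\rho)|,
$$
so that $|\omega \cap B(0,C\rho)|/|B(0,C\rho)| \geq \gamma/(3C)^d$, uniformly in $\rho$, giving the positive $\liminf$. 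This is structurally the same computation as the end of the proof of Proposition~\ref{increas}, with $\delta=\delta'=1$ and the role of the center-ball played by $B(0,\rho)$.

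\textbf{Main obstacle.} The only real subtlety is bookkeeping in the ($\Rightarrow$) covering step: one must choose the family of centers $x_i$ (finite, by compactness of $\overline{B(0,\rho)}$ and an initial reduction to a finite subcover) so that (a) the balls $B(x_i, R\langle x_i\rangle)$ cover $B(0,\rho)$, and (b) their tripled versions are all contained in a single ball $B(0,C\rho)$ with $C$ independent of $\rho$ — this needs $\langle x_i\rangle \lesssim \rho$, which holds since $|x_i| \leq \rho$. Keeping these constants uniform in $\rho$ is what makes the $\liminf$ (rather than merely a $\limsup$) come out positive; once that is set up, the volume estimates are routine and mirror Proposition~\ref{increas}.
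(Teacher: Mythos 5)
Your proposal is correct, and the backward implication is essentially the paper's argument: using $|x|\leq \langle x\rangle$ to fit a ball $B(0,c\langle x\rangle)$ of radius at least $R_0$ inside $B(x,R\langle x\rangle)$ and comparing volumes. The forward implication, however, takes a genuinely different route. You run a Vitali covering argument (finite subcover of $\overline{B(0,\rho)}$ by the balls $B(x_i,R\langle x_i\rangle)$, disjoint subfamily, tripled balls, all inside $B(0,C\rho)$), exactly in the spirit of the proof of Proposition~\ref{increas}; this is correct, and your bookkeeping of the constant $C$ and of the radii $C\rho$, $\rho\geq 1$, does give a uniform lower bound on the $\liminf$. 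The paper avoids any covering: for each $R\geq R_0$ it chooses a single point $x_R$ with $|x_R|+R_0\langle x_R\rangle=R$, so that $B(x_R,R_0\langle x_R\rangle)\subset B(0,R)$ while $\langle x_R\rangle\geq R/(R_0+1)$, and one application of the thickness hypothesis to this one ball already yields $|\omega\cap B(0,R)|\geq \gamma\big(\tfrac{R_0}{R_0+1}\big)^d|B(0,R)|$ for every $R\geq R_0$. The single-ball trick is shorter, needs no compactness or Vitali lemma, and produces a cleaner constant than your $\gamma/(3C)^d$; it works precisely because at $\delta=1$ the density $R_0\langle x\rangle$ is comparable to $R$ near the boundary of $B(0,R)$, so one thickness ball captures a fixed fraction of the volume. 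Your covering argument is heavier but is the more robust machinery: it is what one must do for densities growing slower than $\langle x\rangle$ (as in Proposition~\ref{increas}), where no single admissible ball is large enough.
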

\medskip
\begin{proof}
Let us first assume that $\omega$ is $1$-weakly thick. There exists $0< \gamma \leq 1$, $R_0>0$ such that
\begin{equation*}\label{1weak0}
\forall x \in \rr^d, \quad | \omega \cap B(x, R_0 \langle x \rangle)| \geq \gamma |B(x, R_0 \langle x \rangle)|.
\end{equation*}
Let $R \geq R_0$ and $x_R \in \rr^d$ such that $$|x_R|+R_0\langle x_R \rangle = R.$$
Let us notice that
$$|x_R| \leq R \quad \text{and} \quad \langle x_R \rangle \geq \frac{R}{R_0+1}.$$
%and therefore, for all $R \geq R_0+1$,
%$$\frac{R^2-(R_0+1)^2}{(R_0+1)^2} \leq |x_R|^2 \leq R^2.$$
Since $|x_R|+R_0\langle x_R \rangle = R$, the following inclusion holds
$$B(x_R, R_0 \langle x_R \rangle ) \subset B(0,R).$$
We deduce that
\begin{align*}
|\omega \cap B(0,R)| & \geq |\omega \cap B(x_R, R_0 \langle x_R \rangle)| \\ \nonumber
& \geq \gamma |B(x_R, R_0 \langle x_R \rangle)| = \gamma |B(0,1)| R_0^d \langle x_R \rangle^d \\ \nonumber
& \geq \frac{\gamma R_0^d}{(R_0+1)^d} R^d |B(0,1)| = \frac{\gamma R_0^d}{(R_0+1)^d} |B(0,R)|,
\end{align*}
and it readily follows that
$$\liminf \limits_{R \to +\infty} \frac{|\omega \cap B(0,R)|}{|B(0,R)|} \geq \frac{\gamma R_0^d}{R_0^d+1} >0.$$
Conversely, let us assume that 
$$\liminf \limits_{R \to +\infty} \frac{|\omega \cap B(0,R)|}{|B(0,R)|} >0.$$
It provides some constants $\delta >0$ and $R_0 >0$ such that
\begin{equation*}
\forall R \geq R_0, \quad |\omega \cap B(0,R)| \geq \delta |B(0,R)|.
\end{equation*}
Let $x \in \rr^d$. Since $|x| \leq \langle x \rangle$, we have the following inclusion
$$ B(0, R_0 \langle x \rangle) \subset B(x, (R_0+1) \langle x \rangle).$$
Since $R_0 \langle x \rangle \geq R_0$, we therefore deduce that
\begin{align*}
| \omega \cap B(x, (R_0+1) \langle x \rangle)| & \geq |\omega \cap B(0, R_0 \langle x \rangle)| \\ \nonumber
& \geq \delta |B(0, R_0 \langle x \rangle)| \\ \nonumber
& = \frac{\delta R_0^d}{(R_0+1)^d}  |B(x, (R_0+1) \langle x \rangle)|.
\end{align*}
This establishes that $\omega$ is $1$-weakly thick and this ends the proof of Proposition~\ref{1weak}.
\end{proof}
\subsection{Some examples of weak thick sets}
In this section, we give some examples of weak thick sets in the one and two-dimensional cases, which can be easily generalized to higher dimensions. Of course, any subset $\omega$ containing one of the following examples also satisfies a weak thickness condition. 
\medskip
  
In the one-dimensional case, let us consider the following measurable subset
\begin{equation*}\label{weak_thick_ex1d}
\omega_{\delta} = \bigcup_{n \in \nn} \Big[n^{\frac{1}{1-\delta}}, \frac{n^{\frac{1}{1-\delta}}+(n+1)^{\frac{1}{1-\delta}}}{2}\Big] \cup \Big[-\frac{n^{\frac{1}{1-\delta}}+(n+1)^{\frac{1}{1-\delta}}}{2}, -n^{\frac{1}{1-\delta}}\Big],
\end{equation*}
with $0 \leq \delta <1$. The set $\omega_{\delta}$ is $\delta$-weakly thick. However, when $\delta >0$, it fails to satisfy the $\delta’$-weak thickness condition, for any $0\leq \delta’ < \delta$, since the complementary of $\omega_{\delta}$ contains some intervals of the following form 
$$\Big]\frac{n^{\frac{1}{1-\delta}}+(n+1)^{\frac{1}{1-\delta}}}{2}, (n+1)^{\frac{1}{1-\delta}}\Big[, \quad n \in \nn,$$
of length
%$$\left| \Big]\frac{n^{1+\delta}+(n+1)^{1+\delta}}{2}, (n+1)^{1+\delta}\Big[ \right| = \frac{(n+1)^{1+\delta}-n^{1+\delta}}{2} \sim_{n \to +\infty} \frac{1+\delta}{2} n^{\delta}.$$
$$\frac{(n+1)^{\frac{1}{1-\delta}}-n^{\frac{1}{1-\delta}}}{2} \sim_{n \to +\infty} \frac{n^{\frac{\delta}{1-\delta}}}{2(1-\delta)}.$$
\begin{figure}[!h]
\begin{tikzpicture}[scale=1.3]
\draw[->] (-2.5,0) --(7.7,0);
\draw (7.7,0) node[below right] {$x$};
\draw [very thick, red] (-1,0) -- (-1.7,0);
\draw (-1,0.2) -- (-1,-0.2);
\draw [very thick, red] (-0.5,0) --(0.5,0);
\draw (0,0.2) -- (0,-0.2);
\draw (1,0.2) -- (1,-0.2);
\draw [very thick, red] (1,0) --(1.9,0);
\draw (2.8,0.2) -- (2.8,-0.2);
\draw [very thick, red] (2.8,0) --(4,0);
\draw (5.2,0.2) -- (5.2,-0.2);
\draw [very thick, red] (5.2,0) --(6.65,0);
\draw (-1,-0.2) node[below] {$-1$};
\draw (0,-0.2) node[below] {$0$};
\draw (1,-0.2) node[below] {$1$};
\draw (2.8,-0.2) node[below] {$2^{\frac{3}{2}}$};
\draw (5.2,-0.2) node[below] {$3^{\frac{3}{2}}$};
\end{tikzpicture}
\caption{The subset $\omega_{\frac{1}{3}}$. }
%Les zones non colorées en rouge représentent le complémentaire de $\omega_{\frac 12}$ et contient des intervalles de longueurs arbitrairement grandes.}
\end{figure}
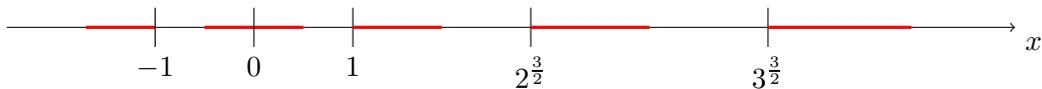
%\medskip

\begin{figure}[!h]
\begin{tikzpicture}[scale=1.3]
\draw[->] (-2.5,0) --(7.7,0);
\draw (7.7,0) node[below right] {$x$};
\draw [very thick, red] (-2,0) -- (-2.5,0);
\draw (-2,0.2) -- (-2,-0.2);
\draw [very thick, red] (-1,0) -- (-1.5,0);
\draw (-1,0.2) -- (-1,-0.2);
\draw [very thick, red] (-0.5,0) --(0.5,0);
\draw (0,0.2) -- (0,-0.2);
\draw (1,0.2) -- (1,-0.2);
\draw [very thick, red] (1,0) --(1.5,0);
\draw (2,0.2) -- (2,-0.2);
\draw [very thick, red] (2,0) --(2.5,0);
\draw (3,0.2) -- (3,-0.2);
\draw [very thick, red] (3,0) --(3.5,0);
\draw (4,0.2) -- (4,-0.2);
\draw [very thick, red] (4,0) --(4.5,0);
\draw (5,0.2) -- (5,-0.2);
\draw [very thick, red] (5,0) --(5.5,0);
\draw (6,0.2) -- (6,-0.2);
\draw [very thick, red] (6,0) --(6.5,0);
\draw (7,0.2) -- (7,-0.2);
\draw [very thick, red] (7,0) --(7.5,0);
\draw (-2,-0.2) node[below] {$-2$};
\draw (-1,-0.2) node[below] {$-1$};
\draw (0,-0.2) node[below] {$0$};
\draw (1,-0.2) node[below] {$1$};
\draw (2,-0.2) node[below] {$2$};
\draw (3,-0.2) node[below] {$3$};
\draw (4,-0.2) node[below] {$4$};
\draw (5,-0.2) node[below] {$5$};
\draw (6,-0.2) node[below] {$6$};
\draw (7,-0.2) node[below] {$7$};
\end{tikzpicture}
\caption{The subset $\omega_{0}$.}
\end{figure}
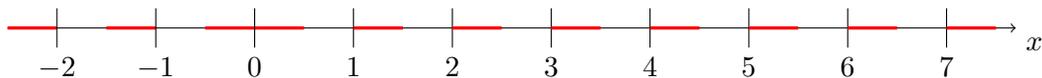

In the two-dimensional case, we define the following measurable subset of $\rr^2$
\begin{equation*}
\omega_{\delta, R} = \left\{ (x,y) \in \rr^2; \quad |y| > R \langle x\rangle^{\delta} \right\},
\end{equation*}
with $0 \leq \delta \leq1$ and $R>0$.
The subset $\omega_{\delta, R}$ is $\delta$-weakly thick. However, when $\delta >0$, its complement contains some balls of the form 
$$B(z, R’ \langle z \rangle^{\delta’}), \quad z \in \rr^2,$$
for any $R’>0$ and $0\leq \delta’ < \delta$. Then, this set can not be $\delta’$-weakly thick, for any $0\leq \delta’ <\delta$.
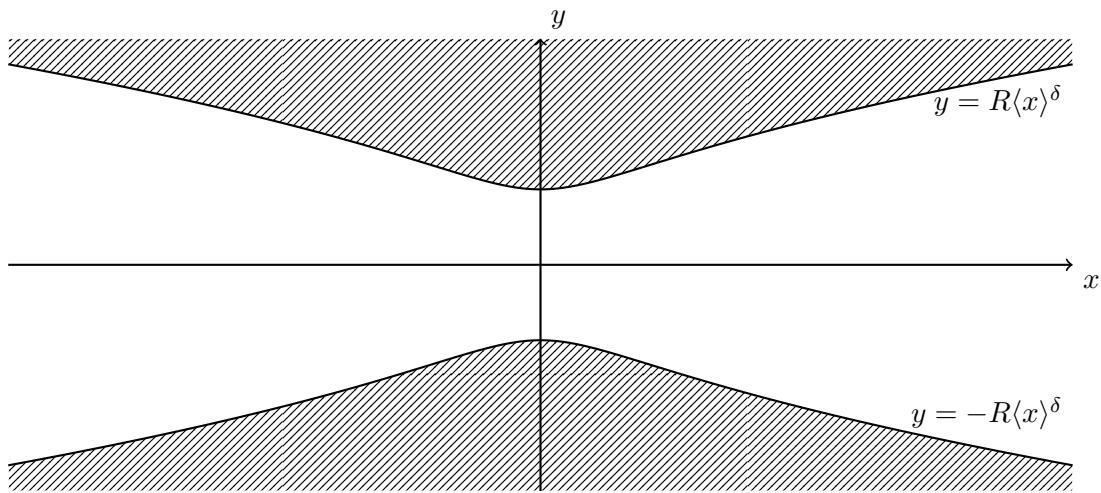
\begin{figure}[!h]
\begin{tikzpicture}[scale=1]
%\filldraw[draw=white, fill=gray!20]
%(-7, {-(1+49)^0.25}) -- (-7, {(1+49)^0.25})
%-- plot  [domain=-7: 7] (\x, {(1+49)^0.25})
%-- (7, {-(1+49)^0.25})
%--plot  [domain=-7: 7] (\x, {-(1+49)^0.25})-- cycle;
%\filldraw[draw=white, fill=gray!20]
%(0, 1)--(0, {(1+49)^0.25}) -- (7, {(1+49)^0.25})
%-- plot [domain=0:7] plot(\x, {(1+\x^2)^0.25})
%-- cycle;

%\draw [domain=0:7] plot(\x, {-(1+\x^2)^0.25});
%\draw [domain=-7:0] plot(\x, {(1+(\x)^2)^0.25});
%\draw [domain=-7:0] plot(\x, {-(1+(\x)^2)^0.25});
%\draw [domain=-7:7, color=white] plot(\x, 3);
%\draw [domain=-7:7, color=white] plot(\x, -3);

\filldraw[draw=white, fill=gray!40, pattern=north east lines]
(-7,{50^0.25})-- plot [domain=-7:7, samples=200] (\x, {(1+(\x)^2)^0.25})
-- (7, {50^0.25})-- (7, 3)-- (-7,3)--cycle;
\filldraw[draw=white, fill=gray!40, pattern=north east lines]
(-7,{-50^0.25})-- plot [domain=-7:7, samples=200] (\x, {-(1+(\x)^2)^0.25})
-- (7, {-50^0.25})-- (7, -3)-- (-7,-3)--cycle;
%\filldraw[draw=white, fill=gray]
%(-7,{50^0.25})-- plot [domain=-7:7] (\x, {(1+(\x)^2)^0.25})
%-- (7, {50^0.25})-- (7, 3)-- (-7,3)--cycle;
%\filldraw[draw=white, fill=gray]
%plot [domain=-7:0] (\x, {(1+(\x)^2)^0.25})
%-- (7, {50^0.25})-- (7, 3)-- (0,3)--cycle;
%\begin{axis}
%    axis lines = center,
%    xticklabels=none,
%    yticklabels=none,
%    xtick=\empty,ytick=\empty,
%    %xlabel = $x$,
%    %ylabel = {$y$},
%    scale=0.8,
%]
%\addplot [
%    domain=-7:7,
%    samples=500, 
%    name path=f,
%    color=black,
%]
%{(1+x^2)^0.5};
%\end{axis}
\draw [thick, domain=-7:7, samples=200] plot(\x, {(1+(\x)^2)^0.25});
\draw [thick, domain=-7:7, samples=200] plot(\x, {-(1+(\x)^2)^0.25});
\draw (7,{50^0.25-0.14}) node[below left] {$y=R\langle x \rangle^{\delta}$};
\draw (7,{-50^0.25+0.34}) node[above left] {$y=-R\langle x \rangle^{\delta}$};
\draw[thick, ->] (-7,0) --(7,0);
\draw (7,0) node[below right] {$x$};
\draw[thick, ->] (0,-3) --(0,3);
\draw (0,3) node[above right] {$y$};
\end{tikzpicture}
\caption{The subset $\omega_{\delta, R}$ with $0< \delta \leq 1$ and $R>0$.}
\end{figure}

We end this section by pointing out that contrary to the case $\delta <1$, the $1$-weak thickness property allows the subset $\omega$ to be contained in a half-space. Indeed, it can be readily check from Proposition~\ref{1weak} that any cone of the following form
\begin{equation*}
\mathcal C_{\theta}= \left\{(r \cos t, r \sin t) \in \rr^2; \ r>0, |t| \leq \frac{\pi}{2}-\theta \right\},
\end{equation*}
where $0 \leq \theta < \frac{\pi}{2}$, defines a $1$-weak thick set.

%\bigskip

\begin{figure}[!h]
\begin{tikzpicture}[scale=1.4]
%\draw (2,2.5) node[above]{Cone $\omega_{\theta}$};
\draw (0,0)--({2*sqrt(3)},2);
\draw (0,0)--({2*sqrt(3)},-2);
\draw [->] (0,-2.3)->(0,2.3);
\draw [->] (-1.3,0)->(5,0);
\draw ({0.5*sqrt(3)},0.5) arc (30:90:1);
\draw ({0.5*sqrt(3)},-0.5) arc (-30:-90:1);
\draw (0.65,1.2) node[below]{$\theta$};
\draw (0.65,-1.2) node[above]{$\theta$};
\fill [pattern=north east lines] (0,0)--({2*sqrt(3)},2)--({2*sqrt(3)},-2);
\end{tikzpicture}  
\caption{The cone $\mathcal C_{\theta}$.}
\end{figure}
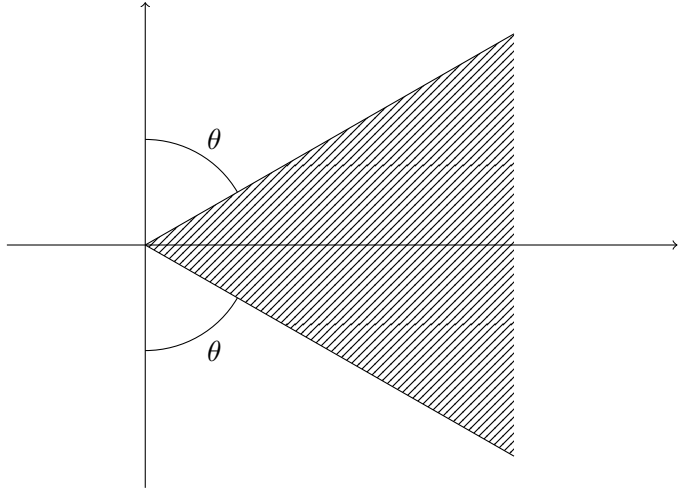
%\begin{center}
%\begin{tikzpicture}[scale=3]
%\draw (2.5,5) node[above]{Complement of $\omega_{\delta,R}$};
%
%\begin{axis}[    
%    axis lines = center,
%    xticklabels=none,
%    yticklabels=none,
%    xtick=\empty,ytick=\empty,
%    %xlabel = $x$,
%    %ylabel = {$y$},
%    scale=0.8,
%]
%\addplot [
%    domain=0:10,
%    samples=500, 
%    name path=f,
%    color=black,
%]
%{x^0.5};
%\addplot [
%    domain=0:10,
%    samples=500, 
%    name path= g,
%    color=black,
%]
%{-x^0.5};
%\addplot [
%    domain=-10:0,
%    samples=500, 
%    name path= h,
%    color=black,
%]
%{(-x)^0.5};
%\addplot [
%    domain=-10:0,
%    samples=500, 
%    name path= j,
%    color=black,
%]
%{-(-x)^0.5};
%\addplot[black!10, opacity=0.8] fill between[of=f and g, soft clip={domain=0:10}];
%\addplot[black!10, opacity=0.8] fill between[of=h and j, soft clip={domain=-10:0}];
%\addplot [
%    domain=-10.5:10.5,
%    samples=500, 
%    color=black,
%]
%{0};
%\end{axis}
%%\draw (0,2.5) node[above]{Cone $\omega_{\alpha}$}
%\end{tikzpicture}
%\end{center}

\section{Proof of Theorem~\ref{spec_ineq}}\label{proof_result}

This section aims at establishing the spectral inequalities given by Theorem~\ref{spec_ineq}. In Subsection~\ref{bernstein_estimate_proof}, we establish some Bernstein type estimates which are used in the proof of Theorem~\ref{spec_ineq}, given in Subsection~\ref{proof_spec_ineq}. In the following, for all multi-indexes $\alpha=(\alpha_1, \cdots, \alpha_d) \in \nn^d$, we use the notation
$|\alpha| =\alpha_1+\cdots+\alpha_d.$
\subsection{Smoothing properties and Bernstein type estimates}\label{bernstein_estimate_proof}
This section is devoted to the proof of the following Bernstein type estimates which play a key role in the proof of Theorem~\ref{spec_ineq}:
\medskip
\begin{proposition}\label{bernstein_estim}
Let $k,m \geq 1$ be positive integers and $0 < s \leq \frac{1}{2}(\frac{1}{k}+\frac{1}{m})$. There exist positive constants $C=C_{s, k,m,d}\geq 1$, $C'=C'_{s,k,m,d}\geq 1$, $C''=C''_{s,k,m,d}\geq 1$, $\eta=\eta_{s, k, m,d} >0$ and $\eta'=\eta'_{s,k,m,d}>0$ such that
\begin{multline}\label{norm2}
\forall \lambda >0, \forall g \in \mathcal{E}^{k,m}_{\lambda}, \forall p \in \nn, \forall \beta \in \nn^d, \\
\|\langle x\rangle^{p} \partial_x^{\beta} g \|_{L^2(\rr^d)} \leq {C}^{1+p+\val \beta} (p!)^{\frac{1}{2sk}} (\val \beta !)^{\frac{1}{2sm}} e^{\eta' \lambda^{s}} \|g\|_{L^2(\rr^d)},
\end{multline}
\begin{multline}\label{norm2bis}
\forall \lambda >0, \forall g \in \mathcal{E}^{k,m}_{\lambda}, \forall \beta \in \nn^d, \\
\|e^{\eta \langle x \rangle^{2sk}} \partial_x^{\beta} g \|_{L^2(\rr^d)} \leq {C'}^{1+\val \beta} (\val \beta !)^{\frac{1}{2sm}} e^{\eta' \lambda^{s}} \|g\|_{L^2(\rr^d)},
\end{multline}
\begin{align}\label{norminf}
\forall \lambda >0, \forall g \in \mathcal{E}^{k,m}_{\lambda}, \forall \beta \in \nn^d, \quad
\|\partial_x^{\beta} g \|_{L^{\infty}(\rr^d)} \leq {C''}^{1+\val \beta} (\val \beta !)^{\frac{1}{2sm}} e^{\eta' \lambda^{s}} \|g\|_{L^2(\rr^d)},
\end{align}
where $\mathcal{E}^{k,m}_{\lambda}$ is defined in \eqref{finite_sum_eg}.
\end{proposition}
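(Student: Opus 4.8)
The plan is to derive all three estimates from the known quantitative Gelfand--Shilov smoothing effects for the semigroup $e^{-tH_{k,m}^s}$ (attributed to Alphonse), combined with the fact that elements of $\mathcal{E}_\lambda^{k,m}$ are ``frozen'' under the backward-in-time action of this semigroup up to a controlled exponential factor. Concretely, if $g\in\mathcal{E}_\lambda^{k,m}$, then for every $t>0$ one has $g = e^{tH_{k,m}^s}e^{-tH_{k,m}^s}g$, and since $g$ is supported on eigenvalues $\lambda_n\le\lambda$, $\|e^{tH_{k,m}^s}h\|_{L^2}\le e^{t\lambda^s}\|h\|_{L^2}$ for $h=e^{-tH_{k,m}^s}g$. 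Thus any bound of the form $\|\langle x\rangle^p\partial_x^\beta e^{-tH_{k,m}^s}h\|_{L^2}\le A(t,p,\beta)\|h\|_{L^2}$ transfers, after composing, to $\|\langle x\rangle^p\partial_x^\beta g\|_{L^2}\le A(t,p,\beta)e^{t\lambda^s}\|g\|_{L^2}$, and then one optimizes in $t$.

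First I would recall precisely the smoothing estimate: there exist constants such that for $t\in(0,1]$, $\|\langle x\rangle^p\partial_x^\beta e^{-tH_{k,m}^s}h\|_{L^2}\le C^{1+p+|\beta|}(p!)^{1/(2sk)}(|\beta|!)^{1/(2sm)}\, t^{-\mu(p,\beta)}\|h\|_{L^2}$ for an appropriate negative power of $t$; this encodes that $e^{-tH_{k,m}^s}$ maps $L^2$ into the Gelfand--Shilov space $S^{1/(2sm)}_{1/(2sk)}$ with explicit norm growth — the anisotropy $1/(2sk)$ versus $1/(2sm)$ being exactly what Shubin operators provide (confinement of order $2k$ gives decay of Gelfand--Shilov index $1/(2sk)$, ellipticity of order $2m$ gives regularity index $1/(2sm)$). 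Composing with $g=e^{tH_{k,m}^s}e^{-tH_{k,m}^s}g$ yields, for all $t\in(0,1]$,
\begin{equation*}
\|\langle x\rangle^p\partial_x^\beta g\|_{L^2}\le C^{1+p+|\beta|}(p!)^{\frac{1}{2sk}}(|\beta|!)^{\frac{1}{2sm}}\, t^{-\mu}e^{t\lambda^s}\|g\|_{L^2}.
\end{equation*}
Choosing $t=\min(1,\lambda^{-s})$ (or $t\sim\mu/\lambda^s$) converts the $t^{-\mu}e^{t\lambda^s}$ factor into something absorbed, up to enlarging $C$, into $e^{\eta'\lambda^s}$: indeed $t^{-\mu}=\lambda^{s\mu}\le C_\mu e^{\eta'\lambda^s}$ with $\eta'$ as small as we like and $C_\mu$ depending on $\mu$. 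This gives \eqref{norm2}.

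For \eqref{norm2bis} I would sum \eqref{norm2} over $p$: writing $e^{\eta\langle x\rangle^{2sk}}=\sum_{p\ge0}\frac{\eta^p}{p!}\langle x\rangle^{2skp}$, the triangle inequality gives $\|e^{\eta\langle x\rangle^{2sk}}\partial_x^\beta g\|_{L^2}\le\sum_p\frac{\eta^p}{p!}\|\langle x\rangle^{2skp}\partial_x^\beta g\|_{L^2}$, and plugging in \eqref{norm2} with $p$ replaced by $2skp$ gives a sum $\sum_p\frac{\eta^p}{p!}C^{1+2skp+|\beta|}((2skp)!)^{1/(2sk)}(|\beta|!)^{1/(2sm)}e^{\eta'\lambda^s}$; since $((2skp)!)^{1/(2sk)}\le C^p\,p!$ by Stirling, the $p$-sum converges geometrically provided $\eta$ is chosen small enough relative to $C$, leaving the claimed bound. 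For \eqref{norminf} I would use a Sobolev embedding $\|u\|_{L^\infty(\rr^d)}\le C_d\sum_{|\gamma|\le \lceil d/2\rceil+1}\|\partial_x^\gamma u\|_{L^2(\rr^d)}$ applied to $u=\partial_x^\beta g$, reducing to \eqref{norm2} with $p=0$ and $\beta$ replaced by $\beta+\gamma$; the extra factorial factors $(|\beta|+|\gamma|)!)^{1/(2sm)}\le C^{|\beta|}(|\beta|!)^{1/(2sm)}$ for $|\gamma|$ bounded are harmless.

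The main obstacle I anticipate is locating and stating the smoothing estimate in exactly the quantitative, anisotropic form needed — with the correct Gelfand--Shilov exponents $1/(2sk)$ and $1/(2sm)$, the sharp power $t^{-\mu}$, and constants of the multiplicative-exponential shape $C^{1+p+|\beta|}(p!)^{1/(2sk)}(|\beta|!)^{1/(2sm)}$ — rather than merely qualitative Gelfand--Shilov regularity; everything else (the semigroup-freezing trick, optimization in $t$, summation over $p$, Sobolev embedding) is routine once that input is in place. A secondary technical point is the condition $0<s\le\frac12(\frac1k+\frac1m)$: this is presumably the threshold below which the semigroup indeed regularizes into the relevant Gelfand--Shilov class (for larger $s$ the smoothing is too weak or of different type), so I would make sure the cited smoothing result is applicable precisely on this range and track how the constants $\eta,\eta'$ degenerate as $s$ approaches the endpoint.
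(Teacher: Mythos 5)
Your overall strategy is exactly the paper's: combine Alphonse's quantitative smoothing estimates (Theorem~\ref{smoothing_Shubin}) with the ``semigroup freezing'' identity $g=e^{-t\mathcal H^s_{k,m}}\big(e^{t\mathcal H^s_{k,m}}g\big)$ and the bound $\|e^{t\mathcal H^s_{k,m}}g\|_{L^2}\le e^{t\lambda^s}\|g\|_{L^2}$ for $g\in\mathcal E^{k,m}_\lambda$. The auxiliary steps also match in spirit: the paper converts the monomial weights $x^\alpha$ of Theorem~\ref{smoothing_Shubin} into $\langle x\rangle^p$ via Lemma~\ref{croch}, handles the non-integer powers $\langle x\rangle^{skp}$ via Lemma~\ref{interpolation} before summing the exponential series for \eqref{norm2bis} (your ``replace $p$ by $2skp$'' needs such a rounding/interpolation step, since $2skp\notin\nn$ in general), and proves \eqref{norminf} by Fourier inversion plus bounds on $\|(-\Delta_x)^jg\|_{L^2}$, which is interchangeable with your Sobolev embedding argument.

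The one step that, as written, does not deliver the stated estimate is your optimization in $t$. In Theorem~\ref{smoothing_Shubin} the exponent is $\mu(p,\beta)=\frac{p}{2sk}+\frac{|\beta|}{2sm}+\frac{s^*}{s}d$, which grows linearly in $p+|\beta|$. With your choice $t=\min(1,\lambda^{-s})$ you must bound $\lambda^{s\mu}$ by $C_\mu e^{\eta'\lambda^s}$, and the best constant is $C_\mu=\sup_{\lambda>0}\lambda^{s\mu}e^{-\eta'\lambda^s}\sim(\mu/(e\eta'))^{\mu}$, i.e.\ super-exponential in $p+|\beta|$: absorbing it costs an extra factor of order $(p!)^{\frac{1}{2sk}}(|\beta|!)^{\frac{1}{2sm}}$, so you end up with $(p!)^{\frac{1}{sk}}(|\beta|!)^{\frac{1}{sm}}$, strictly weaker than \eqref{norm2}. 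These doubled exponents are not harmless downstream: at $s=s^*$ one needs precisely $(\,|\beta|!)^{\frac{k}{k+m}}$ with index $\frac{k}{k+m}<1$ for the analyticity argument in Lemma~\ref{equiv_norms} and for the admissible ranges in Theorem~\ref{specific_GS_uncertaintyprinciple} and Proposition~\ref{NSV_example}. The remedy is simply not to optimize: fix $t=t_s$ (the largest time for which the smoothing estimate is valid), so that $t_s^{-\mu(p,\beta)}\le C^{1+p+|\beta|}$ with $C$ depending only on $s,k,m,d$, and accept the factor $e^{t_s\lambda^s}$ --- the statement only asks for some $\eta'>0$, not an arbitrarily small one. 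With that correction your argument coincides with the paper's proof.
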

\medskip
These estimates are derived from quantitative regularizing effects of semigroups generated by anisotropic Shubin operators.  Alphonse investigated the smoothing effects of these semigroups and established the following quantitative estimates in \cite[estimates (2.3)]{Alphonse}: 
\medskip
\begin{theorem}[Alphonse, \cite{Alphonse}]\label{smoothing_Shubin}
Let $k, m \geq 1$ be positive integers and $s >0$. Setting $s^*= \frac{1}{2}(\frac{1}{k}+\frac{1}{m}) \leq 1$, when $s \leq s^*$, there exist two positive constants $0 < t_s \leq 1$ and $C_s\geq 1$ such that 
\begin{multline*}
\forall 0< t \leq t_s, \forall g \in L^2(\rr^d), \forall \alpha, \beta \in \nn^d, \\
\|x^{\alpha} \partial_{x}^{\beta} (e^{-t\mathcal{H}^s_{k,m}}g) \|_{L^2(\rr^d)} \leq \frac{C^{1+\val \alpha+\val \beta}_s}{t^{\frac{\val \alpha}{2sk}+\frac{\val \beta}{2sm}+\frac{s^*}{s}d}} (\alpha!)^{\frac{1}{2sk}} (\beta!)^{\frac{1}{2sm}} \|g\|_{L^2(\rr^d)},
\end{multline*} 
whereas, when $s \geq s^*$, there exist two positive constants $0< t_s \leq 1$ and $C_s\geq 1$ such that 
\begin{multline*}
\forall 0< t \leq t_s, \forall g \in L^2(\rr^d), \forall \alpha, \beta \in \nn^d, \\
\|x^{\alpha} \partial_{x}^{\beta} (e^{-t\mathcal{H}^s_{k,m}}g) \|_{L^2(\rr^d)} \leq \frac{C^{1+\val \alpha+\val \beta}_s}{t^{\frac{m\val \alpha}{k+m}+\frac{k \val \beta}{k+m}+\frac{s^*}{s}d}} (\alpha!)^{\frac{m}{k+m}} (\beta!)^{\frac{k}{k+m}} \|g\|_{L^2(\rr^d)}.
\end{multline*} 
\end{theorem}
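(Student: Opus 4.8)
The plan is to recover the proof of this smoothing estimate (it is \cite{Alphonse}). The statement is of Gelfand--Shilov type and naturally splits into the two regimes $s\ge s^*$ and $0<s<s^*$. The strategy will be to prove the regime $s\ge s^*$ directly, by means of the pseudodifferential calculus adapted to the anisotropic Shubin class, and then to obtain the regime $0<s<s^*$ from the endpoint $s=s^*$ by Bochner subordination. A preliminary $\Gamma$-factor computation (performed in the last step below) shows that the subordination scheme produces \emph{exactly} the indices $\tfrac{1}{2sk}$, $\tfrac{1}{2sm}$ and the power $\tfrac{\val\alpha}{2sk}+\tfrac{\val\beta}{2sm}+\tfrac{s^*}{s}d$ appearing in the first inequality, so the only genuinely analytic work lies in the case $s\ge s^*$.

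\textbf{The case $s\ge s^*$.} Let $\Lambda=\Lambda(x,\xi)$ be the anisotropic weight adapted to the symbol $\val\xi^{2m}+\val x^{2k}$ of $H_{k,m}$, so that $H_{k,m}$ is globally elliptic, of order $(2k,2m)$ in $(x,\xi)$, in the corresponding Shubin-type symbol class, which carries an analytic (Gelfand--Shilov) pseudodifferential calculus. The steps would be: (a) construct a parametrix for the resolvent $(z-H_{k,m})^{-1}$, with $z$ in a sector avoiding $[\lambda_0^{k,m},+\infty)$, as a pseudodifferential operator whose symbol seminorms are controlled uniformly in $z$ with the correct powers of $\val z$ and $\dist(z,\mathrm{spec}\,H_{k,m})$, the remainder being regularizing; (b) represent the semigroup through the Dunford--Riesz integral $e^{-tH_{k,m}^s}=\tfrac{1}{2i\pi}\oint_{\Gamma}e^{-tz^s}(z-H_{k,m})^{-1}\,dz$ along a suitable sectorial contour $\Gamma$, and combine the symbolic calculus with the decay of $e^{-tz^s}$ on $\Gamma$ to show that $e^{-tH_{k,m}^s}$ is pseudodifferential, with a symbol $a_t$ obeying $\val{\partial_x^{\gamma}\partial_\xi^{\gamma'}a_t}\le C^{1+\val\gamma+\val{\gamma'}}(\gamma!\,\gamma'!)^{\#}t^{-\#}\Lambda^{-N}$ for every $N$, the $t$-blow up originating from $\oint_{\Gamma}\val{e^{-tz^s}}\,\val z^{\#}\,\val{dz}$; (c) commute $x^\alpha\partial_x^\beta$ into the operator, which through the calculus only modifies the symbol by polynomial weights and generates the factors $(\alpha!)^{m/(k+m)}(\beta!)^{k/(k+m)}$ together with extra powers of $t^{-1}$; (d) conclude by the $L^2$-boundedness (Calderón--Vaillancourt in this calculus) of the resulting pseudodifferential operator. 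An alternative to (a)--(d) is the energy/commutator hierarchy: with $u(t)=e^{-tH_{k,m}^s}g$, differentiate $\norm{x^\alpha\partial_x^\beta u(t)}^2$ in time, use the lower bound $\re\langle H_{k,m}^s v,v\rangle\gtrsim\norm{\Lambda^s v}^2$ (Gårding inequality in the calculus) and the fact that $[\partial_{x_j},H_{k,m}]$ and $[x_j,H_{k,m}]$ are of lower anisotropic order, and close an induction on $\val\alpha+\val\beta$ with constants geometric in $\val\alpha+\val\beta$; the treatment of $[x^\alpha\partial_x^\beta,H_{k,m}^s]$ for the fractional power then requires the Cauchy-integral formula for $H_{k,m}^s$ together with resolvent-commutator bounds.

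\textbf{The case $0<s<s^*$.} I would write $s=s^*\sigma$ with $\sigma\in(0,1)$ and use Bochner subordination,
\[
e^{-tH_{k,m}^s}=e^{-t\left(H_{k,m}^{s^*}\right)^{\sigma}}=\int_0^{+\infty}e^{-uH_{k,m}^{s^*}}\,d\mu_{\sigma,t}(u),
\]
where $\mu_{\sigma,t}$ is the law at time $t$ of the $\sigma$-stable subordinator, the probability measure characterized by $\int_0^{+\infty}e^{-\lambda u}\,d\mu_{\sigma,t}(u)=e^{-t\lambda^\sigma}$. One estimates $\norm{x^\alpha\partial_x^\beta e^{-uH_{k,m}^{s^*}}g}$ by the $s=s^*$ inequality when $0<u\le t_{s^*}$, and by the semigroup property together with the $u=t_{s^*}$ case (using the contractivity of $e^{-(u-t_{s^*})H_{k,m}^{s^*}}$) when $u>t_{s^*}$. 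Integrating against $\mu_{\sigma,t}$ and using the negative-moment identity $\int_0^{+\infty}u^{-a}\,d\mu_{\sigma,t}(u)=t^{-a/\sigma}\,\Gamma(1+a/\sigma)/\Gamma(1+a)$ with $a=\tfrac{m\val\alpha+k\val\beta}{k+m}+d$, and then Stirling's formula together with the elementary bounds $\Gamma(1+cn)\le C^{1+n}(n!)^{c}$ (for $0<c\le1$), $\val\alpha!\le d^{\val\alpha}\alpha!$ and $(p+q)!\le2^{p+q}p!\,q!$, converts $t^{-a/\sigma}\Gamma(1+a/\sigma)/\Gamma(1+a)$ into $C^{1+\val\alpha+\val\beta}\,t^{-\left(\frac{\val\alpha}{2sk}+\frac{\val\beta}{2sm}+\frac{s^*}{s}d\right)}(\alpha!)^{\frac{1}{2sk}-\frac{m}{k+m}}(\beta!)^{\frac{1}{2sm}-\frac{k}{k+m}}$ (the two exponents being $\ge0$ precisely because $s<s^*$). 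Multiplying by the base factorials $(\alpha!)^{m/(k+m)}(\beta!)^{k/(k+m)}$ then recovers the first inequality verbatim; taking $t_s=t_{s^*}$ and relabelling constants finishes this regime.

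The main obstacle is the direct treatment of the regime $s\ge s^*$: it requires the full anisotropic Shubin calculus with analytic symbol bounds, a resolvent parametrix whose estimates are uniform in the spectral parameter, and a careful bookkeeping — through the contour integral — of both the exact power of $t$ and the Gelfand--Shilov indices $m/(k+m)$ and $k/(k+m)$, all with constants of the form $C^{1+\val\alpha+\val\beta}$ rather than $\alpha$- and $\beta$-dependent ones. The subordination step is by contrast soft once the endpoint $s=s^*$ is in hand; note also that subordination, which strictly worsens the Gelfand--Shilov indices, cannot reach the regime $s>s^*$, whose estimate is already "saturated" and so must be established directly as well.
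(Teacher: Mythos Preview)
The paper does not prove this theorem: it is quoted verbatim from \cite{Alphonse} (``estimates (2.3)'') and used as a black box in the proof of Proposition~\ref{bernstein_estim}. There is therefore no ``paper's own proof'' to compare your proposal against.

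As to the substance of your sketch: the subordination argument for $0<s<s^*$ is correct and clean, and your $\Gamma$-moment computation indeed reproduces the exponents $\tfrac{1}{2sk}$, $\tfrac{1}{2sm}$, $\tfrac{s^*}{s}d$ exactly (the identity $\tfrac{s^*}{s}\cdot\tfrac{m}{k+m}=\tfrac{1}{2sk}$ is the key arithmetic fact). For the regime $s\ge s^*$, however, what you have written is a high-level programme rather than a proof: steps (a)--(d) presuppose a full \emph{analytic} anisotropic Shubin calculus with symbol seminorms tracked geometrically in $\val\alpha+\val\beta$ and uniformly in the spectral parameter, and this is precisely where the difficulty lies. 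Note also that Alphonse's own approach, as the title of \cite{Alphonse} indicates, goes through \emph{quantitative Agmon estimates} rather than a resolvent parametrix and contour integration; that route bypasses the need for an analytic pseudodifferential calculus and works instead with weighted $L^2$ estimates and commutator hierarchies directly on the generator. Your alternative ``energy/commutator hierarchy'' suggestion is closer in spirit to this, but the passage from $H_{k,m}$ to $H_{k,m}^s$ via the Cauchy integral for fractional powers, with constants of the required form $C^{1+\val\alpha+\val\beta}$, is again left as an assertion.
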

\medskip
We are now in position to prove Proposition~\ref{bernstein_estim}. Let $k, m \geq 1$ be positive integers, and $0< s \leq \frac{1}{2}(\frac{1}{k}+\frac{1}{m})$. According to Theorem~\ref{smoothing_Shubin}, there exist two positive constants $0< t_s \leq 1$ and $C_s \geq 1$ such that
\begin{multline}\label{smooth_eff}
\forall 0< t \leq t_s, \forall g \in L^2(\rr^d), \forall \alpha, \beta \in \nn^d, \\
\|x^{\alpha} \partial_{x}^{\beta} (e^{-t\mathcal{H}^s_{k,m}}g) \|_{L^2(\rr^d)} \leq \frac{C^{1+\val \alpha+\val \beta}_s}{t^{\frac{\val \alpha}{2sk}+\frac{\val \beta}{2sm}+\frac{s^*}{s}d}} (\alpha!)^{\frac{1}{2sk}} (\beta!)^{\frac{1}{2sm}} \|g\|_{L^2(\rr^d)}.
\end{multline} 
Let $\lambda >0$ and $g_0 \in \mathcal{E}^{k,m}_{\lambda}$. Let us define an auxiliary function 
\begin{equation*}
g= e^{t_s \mathcal{H}^s_{k,m}}g_0 := \sum_{\substack{n \in \nn, \\ \lambda_n^{k,m} \leq \lambda}} e^{t_s (\lambda_n^{k,m})^s} \langle g_0, \psi^{k,m}_n \rangle_{L^2(\rr^d)} \psi^{k,m}_n.
\end{equation*}
Notice that
$$e^{-t_s \mathcal{H}^s_{k,m}}g=g_0 \quad \text{and} \quad \| g \|_{L^2(\rr^d)} \leq e^{t_s \lambda^s} \| g_0 \|_{L^2(\rr^d)}.$$
By applying \eqref{smooth_eff} and the above remark, it follows that for all $\alpha, \beta \in \nn^d$,
\begin{align*}
\| x^{\alpha} \partial_x^{\beta} g_0 \|_{L^2(\rr^d)} & \leq \frac{C_s^{1+\val \alpha+\val \beta}}{t^{\frac{\val \alpha}{2sk}+\frac{\val \beta}{2sm}+\frac{s^*}{s}d}_s} (\alpha!)^{\frac{1}{2sk}} (\beta!)^{\frac{1}{2sm}} e^{t_s \lambda^s} \| g_0 \|_{L^2(\rr^d)} \\
& \leq C_{s,k,m,d}^{1+\val \alpha+\val \beta} (\alpha!)^{\frac{1}{2sk}} (\beta!)^{\frac{1}{2sm}} e^{t_s \lambda^s} \| g_0 \|_{L^2(\rr^d)} 
\end{align*}
with $$C_{s,k,m,d} =C_s \max\Big(t_s^{-\frac{s^*}{s}d},t_s^{-\frac{1}{2sk}}, t_s^{-\frac{1}{2sm}}, 1\Big)\geq 1.$$

%Let $p \in \nn$ and $\beta \in \nn^d$. We deduce from the Newton formula that
%\begin{multline}\label{asdf21}
%\|\left\langle x\right\rangle^p \partial_x^\beta g_0 \|_{L^2(\rr^n)}^2 = \int_{\rr^d} \Big( 1 + \sum \limits_{i=1}^d {x_i^2} \Big)^p |\partial_x^\beta g_0(x) |^2 dx \\ = \int_{\rr^d} \sum_{\substack{\gamma \in \nn^{d+1}, \\ \val \gamma=p}} \frac{p!}{\gamma !} x^{2 \tilde{\gamma}} |\partial_x^\beta g_0(x) |^2 dx 
%=\sum_{\substack{\gamma \in \nn^{d+1}, \\ \val \gamma=p}} \frac{p!}{\gamma !} \|x^{\tilde{\gamma}} \partial_x^\beta g_0 \|_{L^2(\rr^d)}^2 ,
%\end{multline}
%where we denote $\tilde{\gamma}=(\gamma_1,...,\gamma_d) \in \nn^d$ if $\gamma=(\gamma_1,...\gamma_{d+1}) \in \nn^{d+1}$.
%Since $C_{s,k,m,d} \geq 1$ and for all $\tilde{\gamma} \in \nn^d$, we have $\tilde{\gamma}! \leq (|\tilde{\gamma}|)!$, it follows that
%\begin{align*}
%\|\left\langle x\right\rangle^p \partial_x^\beta g_0 \|_{L^2(\rr^n)}^2 & \leq \sum_{\substack{\gamma \in \nn^{d+1}, \\ \val \gamma=p}} \frac{p!}{\gamma !} C_{s,k,m,d}^{2(1+|\tilde{\gamma}|+\val \beta)} (\tilde{\gamma}!)^{\frac{1}{sk}} (\beta!)^{\frac{1}{sm}} e^{2t_s \lambda^s} \| g_0 \|^2_{L^2(\rr^d)} \\
%& \leq C_{s,k,m,d}^{2(1+|\tilde{\gamma}|+\val \beta)} (d+1)^p  (p!)^{\frac{1}{sk}} (\beta!)^{\frac{1}{sm}} e^{2t_s \lambda^s} \| g_0 \|^2_{L^2(\rr^d)} \\
%& \leq \big((d+1)C_{s,k,m,d}\big)^{2(1+|\tilde{\gamma}|+\val \beta)}  (p!)^{\frac{1}{sk}} (\beta!)^{\frac{1}{sm}} e^{2t_s \lambda^s} \| g_0 \|^2_{L^2(\rr^d)}
%\end{align*}
\noindent We can now apply the technical Lemmas~\ref{croch} and \ref{interpolation}, stated and proved in Appendice~\ref{gelfand}.
%\begin{lemma}[{\cite[Lemma~5.2]{Martin}}]\label{croch}
%Let $\mu, \nu >0$ such that $\mu+\nu \geq 1$, $C>0$ and $A \geq 1$. If $f \in S_{\nu}^{\mu}$ satisfies
%\begin{equation}\label{gs_estim}
%\forall \alpha \in \nn^d, \forall \beta \in \nn^d, \quad \| x^{\alpha} \partial_x^{\beta} f \|_{L^2(\rr^d)} \leq  C A^{\val \alpha+\val \beta} (\alpha!)^{\nu} (\beta!)^{\mu},
%\end{equation}
%then, it satisfies 
%\begin{equation*}
%\forall p \in \nn, \forall \beta \in \nn^d, \quad \|\langle x \rangle^{p} \partial_x^{\beta} f \|_{L^2(\rr^d)} \leq C (d+1)^{\frac{p}{2}}A^{p+\val \beta} (p!)^{\nu} (\val \beta!)^{\mu}.
%\end{equation*}
%\end{lemma}
First, it follows from Lemma~\ref{croch} that for all $p \in \nn$, $\beta \in \nn^d$,
\begin{equation}\label{firstestimate}
\|\left\langle x\right\rangle^p \partial_x^\beta g_0 \|_{L^2(\rr^d)} \leq (d+1)^{\frac{p}{2}} C_{s,k,m,d}^{1+p+\val \beta} (p!)^{\frac{1}{2sk}} (|\beta|!)^{\frac{1}{2sm}} e^{t_s \lambda^s} \| g_0 \|_{L^2(\rr^d)}.
\end{equation}
It proves the estimates \eqref{norm2}. Regarding the estimates \eqref{norm2bis}, we deduce from the fact that $0< s \leq 1$, \eqref{firstestimate} and Lemma~\ref{interpolation} 
%\begin{lemma}[{\cite[Lemma~5.3]{Martin}}]\label{interpolation}
%Let $\mu, \nu >0$ such that $\mu+\nu \geq 1$, $0 \leq \delta \leq 1$ $C>0$ and $A \geq 1$. If $f \in S_{\nu}^{\mu}$ satisfies
%\begin{equation}\label{int}
%\forall p \in \nn, \forall \beta \in \nn^d, \quad \|\langle x \rangle^p \partial_x^{\beta} f \|_{L^2(\rr^d)} \leq A^{p+\val \beta} (p!)^{\nu} (\val \beta!)^{\mu},
%\end{equation}
%then, it satisfies 
%\begin{equation*}
%\forall p \in \nn, \forall \beta \in \nn^d, \quad \|\langle x \rangle^{\delta p} \partial_x^{\beta} f \|_{L^2(\rr^d)} \leq (4^{\nu} e^{\nu}A)^{p+\val \beta} (p!)^{\delta \nu} (\val \beta!)^{\mu}.
%\end{equation*}
%\end{lemma}
that for all $p \in \nn$, $\beta \in \nn^d$,
\begin{multline}\label{interpolation1}
\|\langle x \rangle^{sk p} \partial_x^{\beta} g_0 \|_{L^2(\rr^d)} \\
\leq C_{s,k,m,d} (8^{\frac{1}{2sk}}e^{\frac{1}{2sk}} \sqrt{d+1} C_{s,k,m,d})^{kp+\val \beta} ((kp)!)^{\frac{1}{2k}} (\val \beta!)^{\frac{1}{2sm}} e^{t_s \lambda^s} \| g_0 \|_{L^2(\rr^d)}.
\end{multline}
By direct computations, we have for all $p \in \nn$,
\begin{equation*}
(kp)! \leq (kp)^{kp}= \big(k^{k}\big)^p \big(p^{p}\big)^{k} \leq \big((ke)^{k}\big)^p (p!)^{k},
\end{equation*}
since $$\forall p \in \nn, \quad p^p \leq p! e^p.$$
The above estimate, together with \eqref{interpolation1}, imply that there exists a new constant $C'=C'_{s, k, m, d} \geq 1$ such that for all $p \in \nn$, $\beta \in \nn^d$,
\begin{equation*}
\|\langle x \rangle^{sk p} \partial_x^{\beta} g_0 \|_{L^2(\rr^d)} \leq C'^{1+p+\val \beta} (p!)^{\frac{1}{2}} (\val \beta!)^{\frac{1}{2sm}} e^{t_s \lambda^s} \| g_0 \|_{L^2(\rr^d)}.
\end{equation*}
Let us define $\eta = \frac{1}{4C'^2}$. We deduce that for all $\beta \in \nn^d$,
\begin{align*}\label{norm2proof}
\| e^{\eta \langle x \rangle^{2sk}} \partial_x^{\beta} g_0 \|^2_{L^2(\rr^d)} & = \int_{\rr^d} e^{ 2\eta \langle x \rangle^{2sk}} |\partial_x^{\beta} g_0(x)|^2 dx \\ 
& = \sum_{p=0}^{+\infty} (2\eta)^p \frac{\|\langle x \rangle^{sk p} \partial_x^{\beta} g_0 \|^2_{L^2(\rr^d)}}{p!} \\
& \leq C'^{2(1+\val \beta)} (\val \beta!)^{\frac{1}{sm}} e^{2t_s \lambda^s} \| g_0 \|^2_{L^2(\rr^d)} \sum_{p=0}^{+\infty} \frac{1}{2^p} \\
& \leq 2 C'^{2(1+\val \beta)} (\val \beta!)^{\frac{1}{sm}} e^{2t_s \lambda^s} \| g_0 \|^2_{L^2(\rr^d)},
\end{align*} 
which proves the estimates \eqref{norm2bis}. It remains to establish \eqref{norminf}. First, by the multinomial formula, we have
\begin{equation*}
\forall j \in \nn, \quad (-\Delta_x)^j = \big( -\partial_{x_1}^2-...-\partial_{x_d}^2\big)^j = \sum_{\substack{\gamma \in \nn^d, \\ \val \gamma = j}} \frac{j!}{\gamma !} (-1)^j \partial_x^{2\gamma}.
\end{equation*}
We deduce from the above equality and \eqref{firstestimate} that for all $j \in \nn$,
\begin{align}\label{laplacian_estimate}
\| (-\Delta_x)^j g_0 \|_{L^2(\rr^d)} & \leq \sum_{\substack{\gamma \in \nn^d, \\ \val \gamma = j}} \frac{j!}{\gamma !} \| \partial_x^{2\gamma} g_0 \|_{L^2(\rr^d)} \\ \notag
& \leq \sum_{\substack{\gamma \in \nn^d, \\ \val \gamma = j}} \frac{j!}{\gamma !} C_{s,k,m,d}^{1+2\val \gamma} (\val{2\gamma} !)^{\frac{1}{2sm}} e^{t_s \lambda^s} \| g_0 \|_{L^2(\rr^d)} \\ \notag
& \leq C_{s,k,m,d}^{1+2j} d^j \big( (2j)!\big)^{\frac{1}{2sm}} e^{t_s \lambda^s} \| g_0 \|_{L^2(\rr^d)} \\ \notag
& \leq \left(2^{\frac{1}{2sm}}C_{s,k,m,d} \sqrt d\right)^{1+2j} (j!)^{\frac{1}{sm}} e^{t_s \lambda^s} \| g_0 \|_{L^2(\rr^d)},
\end{align}
since $$(2j)! =\binom{2j}{j} (j!)^2 \leq 2^{2j} (j!)^2.$$ Thanks to the Fourier inversion formula and Plancherel's formula, it follows that for all $\beta \in \nn^d$
\begin{align}\label{bern_norminf}
\| \partial_x^{\beta} g_0 \|_{L^{\infty}(\rr^d)} & \leq \frac{1}{(2\pi)^d} \int_{\mathbb R^d} \big\vert\xi^{\beta} \widehat g_0(\xi)\big\vert\ \mathrm d\xi \\[5pt] \nonumber
	& \leq \frac{1}{(2\pi)^d}\int_{\mathbb R^d} \frac{|\xi|^{\val \beta}+|\xi|^{\val \beta+d}}{1+|\xi|^{d}} |\widehat g_0(\xi)| \mathrm d\xi \\[5pt]
	& =\frac{1}{(2\pi)^d} \bigg(\int_{\mathbb{R}^d} \frac{1}{(1+|\xi|^{d})^2}\ \mathrm d\xi \bigg)^{\frac12} \big(\big\| |\xi|^{\val \beta}\widehat g_0 \big\|_{L^2(\rr^d)}+\big\||\xi|^{\val \beta+d} \widehat g_0 \big\|_{L^2(\mathbb{R}^d)}\big) \nonumber \\[5pt]
	& = \bigg(\int_{\mathbb{R}^d} \frac{1}{(1+|\xi|^{d})^2}\ \mathrm d\xi \bigg)^{\frac12} \left(\sqrt{\langle (-\Delta_x)^{\val \beta}g_0, g_0\rangle_{L^2(\rr^d)}}+\sqrt{\langle(-\Delta_x)^{\val \beta+d} g_0, g_0 \rangle_{L^2(\mathbb{R}^d)}}\right). \nonumber
\end{align} 
We therefore deduce from \eqref{bern_norminf}, together with \eqref{laplacian_estimate}, that for all $\beta \in \nn^d$,
\begin{equation*}
\|\partial_x^{\beta} g_0\|_{L^{\infty}(\rr^d)} \leq 2\bigg(\int_{\mathbb{R}^d} \frac{1}{(1+|\xi|^{d})^2}\ \mathrm d\xi \bigg)^{\frac12} {(2^{\frac{1}{2sm}}C_{s,k,m,d} \sqrt d)}^{1+\val \beta+d} ((\val \beta+d)!)^{\frac{1}{2sm}} e^{\frac{t_s}{2} \lambda^s} \|g_0\|_{L^2(\rr^d)}.
\end{equation*}
Since for all $\beta \in \nn^d, \quad (\val \beta+d)! \leq 2^{\val \beta+d} d! \val \beta!$, it follows that there exists a new constant $C'_{s, k,m, d} \geq 1$ such that
\begin{equation*}
\forall \beta \in \nn^d, \quad \|\partial_x^{\beta} g_0\|_{L^{\infty}(\rr^d)} \leq {C'}^{1+\val \beta}_{s,k,m,d} (\val \beta!)^{\frac{1}{2sm}} e^{\frac{t_s}{2} \lambda^s} \| g_0\|_{L^2(\rr^d)}.
\end{equation*}
This ends the proof of Proposition~\ref{bernstein_estim}.
\subsection{Proof of the spectral inequalities given by Theorem~\ref{spec_ineq}}\label{proof_spec_ineq}
This section is devoted to the proof of the spectral inequalities given by Theorem~\ref{spec_ineq}. Let us begin by presenting an elementary lemma, which ensures that the quantity $C_{\lambda}^{k,m}(\omega)$ defined in \eqref{localisation_cost} is finite, as soon as $|\omega|>0$ is positive:
\medskip
\begin{lemma}\label{equiv_norms}
Let $k, m \geq 1$ be positive integers and $\omega \subset \rr^d$ be a measurable subset of positive Lebesgue measure $|\omega|>0$. For all $\lambda>0$, there exists a positive constant $D_{k,m, \lambda}(\omega)>0$ such that
\begin{equation*}
\forall f \in \mathcal{E}^{k,m}_{\lambda}, \quad \|f\|_{L^2(\rr^d)} \leq D_{k, m , \lambda}(\omega) \|f\|_{L^2(\omega)},
\end{equation*}
where $\mathcal{E}^{k,m}_{\lambda}$ is defined in \eqref{finite_sum_eg}.
\end{lemma}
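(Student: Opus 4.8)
The plan is to exploit the fact that $\mathcal{E}^{k,m}_{\lambda}$ is a finite-dimensional vector space, on which all norms are equivalent. Concretely, I would first show that the map $f \mapsto \|f\|_{L^2(\omega)}$ defines a genuine norm on $\mathcal{E}^{k,m}_{\lambda}$ (and not merely a seminorm); comparing it with the restriction of $\|\cdot\|_{L^2(\rr^d)}$ to $\mathcal{E}^{k,m}_{\lambda}$ then immediately produces the constant $D_{k,m,\lambda}(\omega)$.

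The heart of the argument is therefore the definiteness of $f \mapsto \|f\|_{L^2(\omega)}$, which rests on the fact that every $f \in \mathcal{E}^{k,m}_{\lambda}$ is real-analytic on $\rr^d$. I would derive this from Proposition~\ref{bernstein_estim} applied with the endpoint value $s = s^{*} := \tfrac12\big(\tfrac1k+\tfrac1m\big)$, which is admissible: since $\tfrac{1}{2s^{*}m} = \tfrac{k}{k+m}$, estimate~\eqref{norminf} yields bounds of the form $\|\partial_x^{\beta} f\|_{L^{\infty}(\rr^d)} \leq C^{1+\val\beta} (\val\beta!)^{\frac{k}{k+m}} \|f\|_{L^2(\rr^d)}$ for all $\beta \in \nn^d$, with $C = C(\lambda,k,m,d) > 0$. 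As $\tfrac{k}{k+m} < 1$, the multivariate Taylor series of $f$ has infinite radius of convergence at every point, so $f$ is the restriction of an entire function, hence real-analytic on the connected open set $\rr^d$. Consequently, if $\|f\|_{L^2(\omega)} = 0$ for some $f \in \mathcal{E}^{k,m}_{\lambda}$, then $f$ vanishes almost everywhere on $\omega$, hence on a set of positive Lebesgue measure; the classical fact that the zero set of a non-identically-zero real-analytic function on a connected open subset of $\rr^d$ has Lebesgue measure zero then forces $f \equiv 0$, i.e.\ $f = 0$ in $\mathcal{E}^{k,m}_{\lambda}$. Alternatively, one may simply invoke the Gelfand--Shilov regularity of the eigenfunctions of $H_{k,m}$ recalled in Appendix~\ref{gelfand}, which also entails their real-analyticity.

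Once this is established, $f \mapsto \|f\|_{L^2(\omega)}$ is a norm on the finite-dimensional space $\mathcal{E}^{k,m}_{\lambda}$, hence equivalent to the restriction of $\|\cdot\|_{L^2(\rr^d)}$; this provides a constant $D_{k,m,\lambda}(\omega) > 0$ with $\|f\|_{L^2(\rr^d)} \leq D_{k,m,\lambda}(\omega)\|f\|_{L^2(\omega)}$ for all $f \in \mathcal{E}^{k,m}_{\lambda}$, as wanted. I do not expect a genuine obstacle here: the only non-formal ingredients are the analyticity of the elements of $\mathcal{E}^{k,m}_{\lambda}$ — a direct consequence of the Bernstein estimates proved just above — and the standard measure-theoretic property of zero sets of real-analytic functions; the remainder is linear algebra.
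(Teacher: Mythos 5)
Your proposal is correct and follows essentially the same route as the paper: finite dimensionality reduces the claim to showing that $\|\cdot\|_{L^2(\omega)}$ is definite on $\mathcal{E}^{k,m}_{\lambda}$, and definiteness is obtained from the Bernstein estimate \eqref{norminf} with $s=\tfrac12\big(\tfrac1k+\tfrac1m\big)$, which gives Gevrey bounds of order $\tfrac{k}{k+m}<1$ and hence real-analyticity, so that vanishing on a set of positive measure forces $f\equiv 0$. The only difference is that you spell out the Taylor-series and zero-set arguments that the paper leaves implicit.
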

\medskip
\begin{proof}
Let $\lambda>0$. Since $\mathcal{E}^{k,m}_{\lambda}$ is a $\cc$-vector space of finite dimension, it is sufficient to show that $\| \cdot \|_{L^2(\omega)}$ is a norm on this space. Let $f \in \mathcal{E}^{k,m}_{\lambda}$ such that $\|f\|_{L^2(\omega)} =0$. By applying \eqref{norminf} in Proposition~\ref{bernstein_estim} with $s=\frac{1}{2}\big(\frac{1}{k}+\frac{1}{m}\big)$, we deduce that $f$ is analytic on $\rr^d$, since $\frac{1}{2sm}= \frac{k}{k+m}<1$. It follows that $f=0$ on $\omega$ and therefore, $f=0$, since $|\omega|>0$.
\end{proof}
\subsubsection{First case : $\omega$ is $\delta$-weakly thick with $0\leq \delta < 1$}
In this case, we use the following uncertainty principle with error term holding in Gelfand-Shilov spaces stated in \cite[Theorem~2.3]{Martin}:
\medskip
\begin{theorem}\label{specific_GS_uncertaintyprinciple}
Let $A \geq 1$, $0< \gamma \leq 1$, $0<\mu \leq 1$, $\nu >0$ with $\mu+\nu \geq 1$. Let $\rho : \rr^d \longrightarrow (0,+\infty)$ be a positive $\frac{1}{2}$-Lipschitz function such that there exist $m>0$, $R>0$ satisfying
\begin{equation*}
\forall x \in \rr^d, \quad 0<m \leq \rho(x) \leq R{\left\langle x\right\rangle}^{\frac{1-\mu}{\nu}}.
\end{equation*} 
Let $\omega \subset \rr^d$ be a measurable subset. If $\omega$ is $\gamma$-thick with respect to $\rho$, that is,
\begin{equation*}
\forall x \in \rr^d, \quad |\omega \cap B(x, \rho(x))| \geq \gamma |B(x, \rho(x))|,
\end{equation*} then for all $0<\eps \leq 1$, there exists a positive constant $K=K(d, \gamma, \rho, \mu, \nu) \geq 1$ such that for all $f \in \mathscr{S}(\rr^d)$,
 \begin{equation*}\label{up_schwartz}
% \| f \|^2_{L^2(\rr^d)} \leq C_{\eps} \|f\|^2_{L^2(\omega)} + \eps \|f\|^2_{GS_{\mathcal{N}, 1}},
 \| f \|^2_{L^2(\rr^d)} \leq e^{K(1-\log \eps+\log A)e^{KA^2}} \|f\|^2_{L^2(\omega)} + \eps \sup_{p \in \nn, \beta \in \nn^d} \left(\frac{\|\langle x\rangle^{p} \partial^{\beta}_{x} f\|_{L^2(\rr^d)}}{A^{p+\val \beta} (p!)^{\nu}(\val \beta!)^{\mu}}\right)^2.
 \end{equation*}
\end{theorem}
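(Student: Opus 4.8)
\textbf{Proof strategy for Theorem~\ref{spec_ineq}, assertion \textit{(i)} in the case $0 \leq \delta < 1$.}
The plan is to combine the Bernstein type estimate \eqref{norm2} of Proposition~\ref{bernstein_estim} with the Gelfand-Shilov uncertainty principle of Theorem~\ref{specific_GS_uncertaintyprinciple}, tuning the parameters so that the small parameter $\eps$ absorbs the Bernstein term. First I would choose the critical diffusion index $s = \frac{1}{2}(\frac1k+\frac1m)$, so that Proposition~\ref{bernstein_estim} gives constants $C \geq 1$ and $\eta' > 0$ with
\begin{equation*}
\forall \lambda > 0, \forall g \in \mathcal{E}^{k,m}_\lambda, \forall p \in \nn, \forall \beta \in \nn^d, \quad \|\langle x\rangle^p \partial_x^\beta g\|_{L^2(\rr^d)} \leq C^{1+p+\val\beta} (p!)^{\frac1{2sk}} (\val\beta!)^{\frac1{2sm}} e^{\eta' \lambda^s} \|g\|_{L^2(\rr^d)}.
\end{equation*}
Reading off the exponents, this says precisely that $g$ lies in a Gelfand-Shilov type class with parameters $\nu = \frac1{2sk}$ and $\mu = \frac1{2sm}$ (so $\mu + \nu = \frac1{2s}(\frac1k+\frac1m) \cdot \tfrac{1}{1} $; with the above choice of $s$ one has $\mu+\nu = 1$, which meets the hypothesis $\mu+\nu\geq 1$), and with $A \sim C$ and an amplitude bounded by $C\,e^{\eta'\lambda^s}\|g\|_{L^2(\rr^d)}$.

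Next I would check the geometric hypotheses of Theorem~\ref{specific_GS_uncertaintyprinciple}. The density is $\rho(x) = R\langle x\rangle^\delta$; one needs $\delta = \frac{1-\mu}{\nu}$ for a suitable choice of $(\mu,\nu)$ with $\mu+\nu\geq 1$, $0<\mu\leq 1$, $\nu>0$. Since $\delta < 1$, such a pair exists: for instance keep $\nu = \frac1{2sk}$ fixed (possibly after replacing $s$ by the critical value) and note $\mu = 1 - \delta\nu$; one must verify $0 < \mu \leq 1$ and $\mu + \nu \geq 1$, which reduces to the elementary inequalities $\delta\nu < 1$ and $\nu(1-\delta) \leq \nu$, both automatic. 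The map $x \mapsto R\langle x\rangle^\delta$ is $\frac12$-Lipschitz for $R$ small, but the $\delta$-weak thickness only gives \emph{some} constants $\gamma, R$; here I would use the (standard, and also implicitly used in Proposition~\ref{increas}) fact that $\gamma$-thickness with respect to $R\langle x\rangle^\delta$ for one pair $(\gamma, R)$ implies $\gamma'$-thickness with respect to $R'\langle x\rangle^\delta$ for any smaller $R'$, with a degraded $\gamma'$ depending on $R/R'$ and $d$ — a covering argument of the type already carried out in the proof of Proposition~\ref{increas}. This lets me assume $\rho$ is $\frac12$-Lipschitz and bounded below by a positive constant, matching the theorem's requirements.

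Then I would apply Theorem~\ref{specific_GS_uncertaintyprinciple} to $f \in \mathcal{E}^{k,m}_\lambda \subset \mathscr{S}(\rr^d)$ (Schwartz regularity follows from \eqref{norm2}), obtaining for every $0 < \eps \leq 1$
\begin{equation*}
\|f\|^2_{L^2(\rr^d)} \leq e^{K(1-\log\eps+\log A)e^{KA^2}} \|f\|^2_{L^2(\omega)} + \eps\, C^2 e^{2\eta'\lambda^s}\|f\|^2_{L^2(\rr^d)},
\end{equation*}
where $A$ is the fixed constant coming from the Bernstein estimate (independent of $\lambda$). I would then choose $\eps = \frac{1}{2C^2} e^{-2\eta'\lambda^s}$, so that the last term absorbs into the left-hand side, giving $\frac12\|f\|^2_{L^2(\rr^d)} \leq e^{K(1+2\eta'\lambda^s + \log(2C^2) + \log A)e^{KA^2}}\|f\|^2_{L^2(\omega)}$. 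Since $A$, $C$, $K$, $\eta'$ are all independent of $\lambda$, the exponent is $\leq K' \lambda^s$ for $\lambda \geq 1$ and bounded for $\lambda \leq 1$, whence $\|f\|_{L^2(\rr^d)} \leq K'' e^{K''\lambda^{\frac12(\frac\delta k + \frac1m)}}\|f\|_{L^2(\omega)}$ after noting $s \cdot \frac{?}{} $... more precisely one must track that the effective exponent of $\lambda$ that survives is $\frac12(\frac\delta k + \frac1m)$ rather than $s$; this comes from optimizing \eqref{norm2} not at the critical $s$ but at the value of $s$ dictated by $\delta$ through the relation $\delta = \frac{1-\mu}{\nu}$, i.e.\ choosing $2sk$ and $2sm$ so that $\nu = \frac1{2sk}$, $\mu = \frac1{2sm}$ satisfy $\delta\nu = 1-\mu$, which forces $e^{\eta'\lambda^s}$ with $s = \frac12(\frac\delta k + \frac1m)$. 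I expect the main obstacle to be exactly this bookkeeping: one must apply the Bernstein estimate \eqref{norm2} with the correctly chosen exponents (not necessarily at the critical diffusion) and check that the triple $(A, \mu, \nu)$ then simultaneously satisfies $\mu+\nu\geq 1$, $0<\mu\leq1$, $\nu>0$ and $\frac{1-\mu}{\nu} = \delta$ — a short but delicate algebraic verification that uses $\delta < 1$ crucially. For the limit case $\delta = 1$, and for assertion \textit{(ii)}, one instead invokes Proposition~\ref{1weak} together with a different uncertainty principle from \cite{Martin} (or the open-set estimate of \cite{kkj}), handled separately.
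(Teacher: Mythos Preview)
Your overall architecture is the same as the paper's: apply the Bernstein estimate \eqref{norm2} together with the uncertainty principle of Theorem~\ref{specific_GS_uncertaintyprinciple}, then pick $\eps$ to absorb the error term. The paper goes directly to the parameter choice you reach at the end, namely $s=\tfrac12(\tfrac{\delta}{k}+\tfrac{1}{m})$, $\mu=\tfrac{1}{2sm}=\tfrac{k}{\delta m+k}$, $\nu=\tfrac{1}{2sk}=\tfrac{m}{\delta m+k}$, and simply checks $\tfrac{1-\mu}{\nu}=\delta$, $0<\mu\le1$, $\mu+\nu\ge1$. Your opening detour (critical $s$, then ``keep $\nu$ fixed and set $\mu=1-\delta\nu$'') does not work as written, since in \eqref{norm2} the exponents $\tfrac{1}{2sk}$ and $\tfrac{1}{2sm}$ are coupled through the single parameter $s$; you cannot adjust $\mu$ independently of $\nu$ while still invoking \eqref{norm2}. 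You correct this by the end, so no harm done, but the intermediate paragraph should be deleted.

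There is, however, a genuine gap in your treatment of the $\tfrac12$-Lipschitz hypothesis. You claim that $\gamma$-thickness with respect to $R\langle x\rangle^{\delta}$ implies $\gamma'$-thickness with respect to $R'\langle x\rangle^{\delta}$ for any \emph{smaller} $R'$. This is false: thickness transfers upward in scale, not downward. For a concrete counterexample in dimension one with $\delta=0$, take $\omega=\bigcup_{n\in\ZZ}[2n+1,2n+2]$; then $\omega$ is $\tfrac14$-thick at scale $R=2$, but $B(\tfrac12,\tfrac1{10})\subset[0,1]$ misses $\omega$ entirely, so $\omega$ is not thick at scale $R'=\tfrac1{10}$. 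The Vitali argument from Proposition~\ref{increas} goes the other way (it covers a \emph{large} ball by \emph{smaller} ones). The paper repairs this by \emph{increasing} the density rather than decreasing it: it sets $\rho(x)=R\langle x\rangle^{\delta}$ for $|x|\ge L$ and $\rho(x)=R(1+L^2)^{\delta/2}$ for $|x|\le L$, with $L$ chosen so that $|\nabla(R\langle x\rangle^{\delta})|\le\tfrac12$ outside $B(0,L)$. Since $\rho(x)\ge R\langle x\rangle^{\delta}$ everywhere and $\rho(x)\le R(1+L^2)^{\delta/2}\langle x\rangle^{\delta}$, the inclusion $B(x,R\langle x\rangle^{\delta})\subset B(x,\rho(x))$ together with a volume comparison gives thickness with respect to $\rho$ at the cost of replacing $\gamma$ by $\gamma/(1+L^2)^{\delta d/2}$. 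With this fix in place, the rest of your argument (choice of $A=C$, then $\eps=\tfrac{1}{2C^2}e^{-2\eta'\lambda^{s}}$) coincides with the paper's.
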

 \medskip
\noindent Let $\omega \subset \rr^d$ be a $\delta$-weakly thick subset. By definition, there exist $R>0$ and $0< \gamma \leq 1$ such that 
\begin{equation}\label{deltathick}
\forall x \in \rr^d, \quad |\omega \cap B(x, R \langle x \rangle^{\delta})| \geq \gamma |B(x, R \langle x \rangle^{\delta})|. 
\end{equation}
Let us check that $\omega$ satisfies the assumptions of Theorem~\ref{specific_GS_uncertaintyprinciple}. Notice that
\begin{equation*}
\nabla (R \langle x \rangle^{\delta})= R \delta \frac{x}{\langle x \rangle^{2-\delta}} \underset{|x| \to +\infty}{\longrightarrow} 0,
\end{equation*}
since $0\leq \delta < 1$, and therefore, there exists $L=L(R, \delta)>0$ such that 
\begin{equation}\label{estim_grad}
\forall |x| \geq L, \quad |\nabla (R \langle x \rangle^{\delta})|\leq \frac{1}{2}.
\end{equation}
 We define an auxiliary density by
\begin{equation*}
\rho(x) = \left\lbrace \begin{array}{ll}
R \langle x \rangle^{\delta} &  \text{if } |x| \geq L, \\
R (1+L^2)^{\frac{\delta}{2}}                       &  \text{if } |x| \leq L.
\end{array}\right.
\end{equation*}
%Let us check that $\rho$ is a $\frac{1}{2}$-Lipschitz function on $\rr^d$. Let $x, y \in \rr^d$. If $$[x, y] \subset B(0,L) \quad \text{or} \quad [x, y] \subset \rr^d\setminus B(0,L),$$ then, we readily have
%$$|\rho(x)-\rho(y)| \leq \frac{1}{2} |x-y|.$$
%Let us assume that $x, y  there exists $t \in (0,1)$ such that $t x+(1-t)y \in B(0,L)$, then 
%\begin{equation*}
%|\rho(x)-\rho(y)| = \Big|\rho(y)-\rho\Big(L \frac{y}{|y|}\Big)\Big| \leq \frac{1}{2} \Big|y-L\frac{y}{|y|}\Big| \leq \frac{1}{2} |y-x|.
%\end{equation*}
Thanks to \eqref{estim_grad}, we can readily check that $\rho$ is a $\frac{1}{2}$-Lipschitz function and moreover, we have
\begin{equation}\label{aux1}
\forall x \in \rr^d, \quad 0< R \leq R \langle x \rangle^{\delta} \leq \rho(x) \leq R(1+L^2)^{\frac{\delta}{2}} \langle x \rangle^{\delta}.
\end{equation}
To apply Theorem~\ref{specific_GS_uncertaintyprinciple}, it remains to check that there exists $0<\tilde{\gamma}\leq 1$ such that
\begin{equation*}
\forall x \in \rr^d, \quad |\omega \cap B(x, \rho(x))| \geq \tilde{\gamma} |B(x, \rho(x))|.
\end{equation*}
Let $x \in \rr^d$. We deduce from \eqref{deltathick} and \eqref{aux1} that
\begin{align*}
|\omega \cap B(x, \rho(x))| \geq |\omega \cap B(x, R \langle x \rangle^{\delta})| & \geq \gamma |B(x, R \langle x \rangle^{\delta})| \\
& \geq \frac{\gamma}{(1+L^2)^{\frac{\delta d }{2}}} |B(x, \rho(x))|.
\end{align*}
We can therefore apply Theorem~\ref{specific_GS_uncertaintyprinciple} with the parameters $s=\frac{1}{2}\left(\frac{\delta}{k}+\frac{1}{m}\right)$, $0<\mu= \frac{1}{2sm}= \frac{k}{\delta m +k}\leq 1$ and $\nu= \frac{1}{2sk}= \frac{m}{\delta m+ k}>0$. Since $\delta= \frac{1-\mu}{\nu}$, we obtain that there exist some positive constants $K=K_{R, k, m, \delta, d}\geq 1$ and $D=D_{R, k, m, \delta, d} \geq 1$ such that for all $A\geq 1$, $0<\eps\leq 1$, $\lambda>0$, $f \in \mathcal{E}^{k,m}_{\lambda}$,
 \begin{equation}\label{up_apply}
 \| f \|^2_{L^2(\rr^d)} \leq e^{K(1-\log \eps+\log A)e^{KA^2}} \|f\|^2_{L^2(\omega)} + \eps \sup_{p \in \nn, \ \beta \in \nn^d} \frac{\| \langle x \rangle^p \partial_x^{\beta} f\|^2_{L^2(\rr^d)}}{A^{2(p+\val \beta)} (p!)^{\frac1{sk}} (\val \beta!)^{\frac1{sm}}}.
 \end{equation}
% & \leq e^{K(1-\log \eps+\log A)e^{KA^2}} \|f\|^2_{L^2(\omega)} +  C^2 e^{2 \eta' \lambda^{s}} \eps \|f\|^2_{L^2(\rr^d)},
In order to estimate the error term in \eqref{up_apply}, we use the assertion \eqref{norm2} in Proposition~\eqref{bernstein_estim}. This provides positive constants $C=C_{s, k, m, d} \geq 1$ and $\eta’=\eta’_{s, k, m, d} >0$ such that for all $\lambda>0$, $f \in \mathcal{E}^{k,m}_{\lambda}$,
\begin{equation}\label{bernstein_estim290822}
\forall p \in \nn, \forall \beta \in \nn^d, \quad \| \langle x \rangle^p \partial^{\beta}_x f \|_{L^2(\rr^d)} \leq C^{1+p+|\beta|} (p!)^{\frac{1}{2sk}} (|\beta|!)^{\frac{1}{2sm}} e^{\eta’ \lambda^s} \| f \|_{L^2(\rr^d)}.
\end{equation}
It follows from \eqref{up_apply} and \eqref{bernstein_estim290822} with $A=C$ that for all $0<\eps\leq 1$, $\lambda>0$, $f \in \mathcal{E}^{k,m}_{\lambda}$,
 \begin{equation*}\label{up_apply1}
 \| f \|^2_{L^2(\rr^d)} \leq e^{K(1-\log \eps+\log C)e^{KC^2}} \|f\|^2_{L^2(\omega)} + \eps C^2 e^{2\eta’ \lambda^s} \|f\|^2_{L^2(\rr^d)} .
 \end{equation*}
By choosing $\eps= \frac{1}{2 C^2}e^{-2\eta' \lambda^s}$, we deduce that there exists a new constant $K’=K’_{R, k, m, \delta, d} \geq 1$ so that
 \begin{equation*}
\forall \lambda >0, \forall f \in \mathcal{E}^{k,m}_{\lambda}, \quad  \| f \|^2_{L^2(\rr^d)} \leq {K’}^{K’(1+\lambda^s)} \|f\|^2_{L^2(\omega)}.
 \end{equation*}
By recalling that $s=\frac{1}{2}\Big(\frac{\delta}{k}+\frac{1}{m}\Big)$, the result follows.

\subsubsection{Second case: $\omega$ is $1$-weakly thick}\label{1_weakly} 
In this case, since $\omega$ is $1$-weakly thick, we deduce from Proposition~\ref{1weak} that there exist $\gamma>0$ and $R_0>0$ such that
\begin{equation*}\label{weak_thick}
\forall R \geq R_0, \quad |\omega \cap B(0,R)| \geq \gamma |B(0,R)|.
\end{equation*}
As a first step, we establish that there exists a positive constant $c_{d, k,m}\geq 1$ such that 
\begin{equation*}
\forall \lambda \geq 1, \forall f \in \mathcal{E}^{k,m}_{\lambda}, \quad \| f \|_{L^2(\rr^d)}^2 \leq 2 \| f \|^2_{L^2\big(B(0, c_{d, k, m} \lambda ^{\frac{1}{2k}})\big)}.
\end{equation*}
Thanks to the assertions \eqref{norm2bis} and \eqref{norminf} in Proposition~\ref{bernstein_estim} with $s=\frac{1}{2}(\frac{1}{k}+\frac{1}{m})$, there exist some positive constants $\eta=\eta_{d,k,m}>0$, $\eta'=\eta’_{d,k,m}>0$ and $C=C_{d,k,m} \geq 1$ such that : $\forall \lambda >0, \forall f \in \mathcal{E}^{k,m}_{\lambda}, \forall \beta \in \nn^d,$
\begin{equation*}\label{bernstein_estim2}
\|e^{\eta \langle x \rangle^{1+\frac{k}{m}}} \partial_x^{\beta} f \|_{L^2(\rr^d)} \leq C^{1+\val \beta} (\val \beta!)^{\frac{k}{k+m}} e^{\eta' \lambda^{\frac{1}{2}(\frac{1}{k}+\frac{1}{m})}} \|f\|_{L^2(\rr^d)}
\end{equation*}
and
\begin{equation}\label{bernstein_estim3}
\|\partial_x^{\beta} f \|_{L^{\infty}(\rr^d)} \leq {C}^{1+\val \beta} (\val \beta!)^{\frac{k}{k+m}} e^{\eta' \lambda^{\frac{1}{2}(\frac{1}{k}+\frac{1}{m})}} \|f\|_{L^2(\rr^d)}.
\end{equation}
Let $a>0$. It follows that for all $\lambda >0$ and $f \in \mathcal{E}^{k,m}_{\lambda}$,
\begin{align}\label{space_decay}
\| f \|^2_{L^2(\rr^d)} & = \|f\|^2_{L^2(B(0,a))} + \| f \|^2_{L^2(\rr^d \setminus B(0,a))}\\ \notag
& \leq  \|f\|^2_{L^2(B(0,a))} + e^{-2\eta (1+a^2)^{\frac{1}{2}{(1+\frac{k}{m})}}} \| e^{\eta \langle x \rangle^{{1+\frac{k}{m}}}}f \|^2_{L^2(\rr^d)} \\ \notag
& \leq \|f\|^2_{L^2(B(0,a))} + e^{-2\eta a^{1+\frac{k}{m}}} C^2 e^{2\eta' \lambda^{\frac{1}{2}(\frac{1}{k}+\frac{1}{m})}} \|f \|^2_{L^2(\rr^d)}.
\end{align}
Let $c_{d, k, m}>0$ be a positive constant so that $$\forall \lambda \geq 1, \quad  e^{-2\eta a^{1+\frac{k}{m}}} C^2 e^{2\eta' \lambda^{\frac{1}{2}(\frac{1}{k}+\frac{1}{m})}} \leq \frac{1}{2},$$
with $$ a = c_{d, k, m} \lambda^{\frac{1}{2k}}.$$
It follows from \eqref{space_decay} that
\begin{equation}\label{space_decay1}
\forall \lambda \geq 1, \forall f \in \mathcal{E}^{k,m}_{\lambda}, \quad \| f \|^2_{L^2(\rr^d)} \leq 2 \|f\|^2_{L^2\big(B(0,c_{d, k, m} \lambda^{\frac{1}{2k}})\big)}.
\end{equation}
Let $\lambda\geq 1$ and $f \in \mathcal{E}^{k,m}_{\lambda} \setminus \{0\}$. Notice that, as a consequence of Lemma~\ref{equiv_norms}, we have
$$\forall r >0, \quad \|f\|_{L^2(B(0,r))} >0.$$
We can therefore set 
$$\forall x \in B\big(0,c_{d, k, m} \lambda^{\frac{1}{2k}}\big), \quad g(x)= \frac{f(x)}{\sqrt{2}C e^{\eta' \lambda^{\frac{1}{2}\big(\frac{1}{k}+\frac{1}{m}\big)}}\|f\|_{L^2\big(B(0,c_{d, k, m} \lambda^{\frac{1}{2k}})\big)}}.$$
By using the fact that
$$\| f \|_{L^2\big(B(0, c_{d,k,m}\lambda^{\frac{1}{2k}})\big)} \leq \sqrt{\left|B\big(0, c_{d, k, m} \lambda^{\frac{1}{2k}}\big)\right|} \| f \|_{L^{\infty}\big(B(0, c_{d,k,m}\lambda^{\frac{1}{2k}})\big)},$$
it follows that
\begin{equation}\label{borninf}
\|g\|_{L^{\infty}\big(B(0,c_{d, k, m} \lambda^{\frac{1}{2k}})\big)} \geq \frac{e^{-\eta' \lambda^{\frac{1}{2}\big(\frac{1}{k}+\frac{1}{m}\big)}}}{\sqrt{2|B(0,1)| c_{d,k,m}^d}C \lambda^{\frac{d}{4k}}} \geq e^{-\eta" \lambda^{\frac{1}{2}\big(\frac{1}{k}+\frac{1}{m}\big)}},
\end{equation}
for a suitable constant $\eta"=\eta"(d,k,m)\geq 1$ independent on $\lambda \geq 1$. 
Furthermore, we deduce from the estimates \eqref{bernstein_estim3} and \eqref{space_decay1} that
\begin{align}\label{bernstein_estim290822bis}
\forall \beta \in \nn^d, \quad \|\partial_x^{\beta} g \|_{L^{\infty}\big(B(0,c_{d, k, m} \lambda^{\frac{1}{2k}})\big)} & \leq \frac{\| \partial_x^{\beta} f \|_{L^{\infty}(\rr^d)}}{\sqrt{2}C e^{\eta'\lambda^{\frac{1}{2}\big(\frac{1}{k}+\frac{1}{m}\big)}} \|f\|_{L^2\big(B(0,c_{d, k, m} \lambda^{\frac{1}{2k}})\big)}} \\\nonumber
& \leq C^{\val \beta} (\val \beta!)^{\frac{k}{k+m}} \frac{\|f\|_{L^2(\rr^d)}}{\sqrt{2} \|f\|_{L^2\big(B(0,c_{d, k, m} \lambda^{\frac{1}{2k}})\big)}} \\\nonumber
& \leq C^{\val \beta} (\val \beta!)^{\frac{k}{k+m}}.
\end{align}
We can now use the following proposition established in \cite[Example~11]{Martin}: 
\medskip
\begin{proposition}\label{NSV_example}
Let $0< s < 1$, $A\geq 1$, $R>0$, $d \geq 1$, $0<t \leq 1$ and $0< \gamma \leq 1$ . Let $E \subset B(0,R)$ be a measurable subset of the Euclidean ball centered at $0$ with radius $R$ such that $|E| \geq \gamma |B(0,R)|$. There exists a constant $K=K(s, d) \geq 1$ such that for all $g \in \mathcal{C}^{\infty}(B(0,R))$ satisfying $$\forall \beta \in \nn^d, \quad \|\partial_x^{\beta} g \|_{L^{\infty}(B(0,R))} \leq A^{|\beta|} (|\beta|!)^s$$ and $\|g\|_{L^{\infty}(B(0,R))} \geq t$, the following estimate holds
\begin{equation*}
\| g \|_{L^2(B(0,R))} \leq C_{t, A, s, R, \gamma, d} \|g\|_{L^2(E)} ,
\end{equation*}
where
$$0<C_{t, A, s, R, \gamma, d} \leq \Big(\frac{K}{\gamma}\Big)^{K(1-\log t+ (AR)^{\frac{1}{1-s}})}.$$
\end{proposition}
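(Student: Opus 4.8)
The plan is to convert the Gevrey-type derivative bounds into a genuine holomorphic extension with controlled growth, and then to feed that extension into a quantitative Remez-type inequality for analytic functions. (In the present paper this statement is only quoted, from \cite[Example~11]{Martin}; what follows indicates how its proof runs.) First I would reduce to the case $R=1$: since the quotient $\|g\|_{L^2(B(0,R))}/\|g\|_{L^2(E)}$ and all the hypotheses are covariant under the dilation $x\mapsto Rx$, replacing $g$ by $y\mapsto g(Ry)$ and $E$ by $R^{-1}E$ one may assume $R=1$, and --- enlarging $A$ harmlessly, since decreasing $A$ only strengthens the assumption --- that $a:=AR\geq1$. Then $\|\partial_x^\beta g\|_{L^\infty(B(0,1))}\leq a^{\val\beta}(\val\beta!)^s$ for all $\beta\in\nn^d$, in particular $\|g\|_{L^\infty(B(0,1))}\leq 1$, and the goal becomes a constant bounded by $(K/\gamma)^{K(1-\log t+a^{1/(1-s)})}$ with $K=K(s,d)$.

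The key step is the holomorphic extension. Using the multinomial identity $\sum_{\val\beta=n}\frac1{\beta!}=\frac{d^n}{n!}$, for $z$ in the complex polydisc $\Pi:=\{z\in\cc^d:\val{z_j}<5\text{ for all }j\}$ the Taylor series of $g$ at the origin obeys
\[
\sum_{\beta\in\nn^d}\frac{\val{\partial_x^\beta g(0)}}{\beta!}\,\val{z^\beta}\;\leq\;\sum_{n\geq0}(5a)^n(n!)^s\sum_{\val\beta=n}\frac1{\beta!}\;=\;\sum_{n\geq0}\frac{(5ad)^n}{(n!)^{1-s}}\;\leq\;C_{s,d}\,e^{c_s(5ad)^{1/(1-s)}}\;=:\;M,
\]
the last bound coming from optimising $x^n/(n!)^{1-s}$ over $n$ (the maximiser is $n\approx x^{1/(1-s)}$, the maximal term $\approx e^{(1-s)x^{1/(1-s)}}$). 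Since $s<1$ the series converges absolutely, and a Taylor-remainder estimate shows it represents $g$ on $B(0,1)$; hence it defines a holomorphic function $G$ on $\Pi$ with $G|_{B(0,1)}=g$, $\|G\|_{L^\infty(\Pi)}\leq M$, and $\log M\leq K_0(s,d)\bigl(1+a^{1/(1-s)}\bigr)$. This is where the exponent $(AR)^{1/(1-s)}$ of the proposition originates. Finally $\|g\|_{L^\infty(B(0,1))}\geq t$ furnishes a point $x^*\in\overline{B(0,1)}$ with $\val{G(x^*)}\geq t/2$.

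It then remains to prove the following Remez-type estimate: if $G$ is holomorphic on $\Pi$ with $\val G\leq M$ and $\val{G(x^*)}\geq\tau$ at some $x^*\in\overline{B(0,1)}$, and $E\subset B(0,1)$ satisfies $\val E\geq\gamma\val{B(0,1)}$, then $\|G\|_{L^2(B(0,1))}\leq(K/\gamma)^{K(1+\log(M/\tau))}\|G\|_{L^2(E)}$; applied with $\tau=t/2$ this closes the argument, because $\log(M/\tau)\lesssim_{s,d}1-\log t+a^{1/(1-s)}$. In dimension one this is classical: for $\phi$ holomorphic on $D(0,5)\subset\cc$ with $\val\phi\leq M$ and $\val{\phi(x_0)}\geq\tau$ at some $\val{x_0}\leq1$, Jensen's formula bounds the number $N$ of zeros of $\phi$ in $D(0,3)$ by $N\lesssim\log(M/\tau)$; factoring these zeros out and controlling the nonvanishing factor by a Cartan/Harnack estimate reduces the problem to the classical Remez inequality for the polynomial part, which in its $L^2$ form --- first shrink $E$ to the subset on which $\val\phi^2$ is at most twice its average over $E$, still of relative measure $\gtrsim\gamma$ --- yields a constant $(C/\gamma)^{CN}$. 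For general $d$ I would run a Kovrijkine-type slicing: restrict $G$ to complex lines, keep only the ``good'' slices (those on which $E$ has one-dimensional relative density $\gtrsim\gamma$ and on which the $L^2$-mass of $G$ is a definite fraction of its average, which in addition makes the pointwise lower bound survive and keeps $N\lesssim\log(M/\tau)$ on that slice), apply the one-dimensional estimate slice by slice, and integrate in the transverse variables.

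The main obstacle is precisely this last point: carrying the one-dimensional Remez inequality through the slicing while keeping the exponent \emph{linear} in $\log(M/\tau)$ and the $\gamma$-dependence of the form $(K/\gamma)^{K\,\cdot}$ at the same time. This is the technical content of \cite[Example~11]{Martin}, which rests on Nazarov--Sodin--Volberg-type and Kovrijkine-type arguments; by contrast, the holomorphic-extension step above is elementary once one has the bound $\sum_{n\geq0}x^n/(n!)^{1-s}\lesssim_s e^{c_s x^{1/(1-s)}}$.
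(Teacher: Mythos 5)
First, a point of reference: the paper never proves Proposition~\ref{NSV_example}; it is imported as a black box from \cite[Example~11]{Martin}, so there is no internal proof to measure your attempt against. Your outline does follow the strategy that underlies the cited result, and its ``soft'' half is correct: the scaling reduction to $R=1$, the summation of the Taylor series against the Gevrey bounds (via $\sum_{\val\beta=n}1/\beta!=d^n/n!$ and the Mittag-Leffler-type growth $\sum_{n}x^n/(n!)^{1-s}\lesssim_s e^{c_sx^{1/(1-s)}}$, with a remainder estimate guaranteeing the series represents $g$) producing a holomorphic extension $G$ on a fixed complex neighbourhood with $\log M\lesssim_{s,d}1+(AR)^{1/(1-s)}$, and the identification of this bound as the source of the exponent $(AR)^{\frac{1}{1-s}}$ and of the $-\log t$ term.

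However, as a standalone proof the proposal stops at exactly the decisive point. The estimate you call ``Remez-type'' --- $G$ holomorphic with $\val{G}\leq M$ on the polydisc, $\val{G(x^*)}\geq\tau$ at some $x^*\in\overline{B(0,1)}$, $\val{E}\geq\gamma\val{B(0,1)}$ implies $\|G\|_{L^2(B(0,1))}\leq(K/\gamma)^{K(1+\log(M/\tau))}\|G\|_{L^2(E)}$ --- is asserted, sketched only in dimension one, and then explicitly deferred to the very reference whose statement is being proved, which makes the argument circular rather than complete. The multidimensional slicing you invoke is precisely where the work lies: one must simultaneously (a) extract a positive proportion of lines through $x^*$ on which $E$ has one-dimensional density of order $\gamma$ after accounting for the polar Jacobian $r^{d-1}$, (b) retain the pointwise lower bound on each such line (this part is fine, since every line passes through $x^*$), and (c) recover the full norm $\|G\|_{L^2(B(0,1))}$ from slice-wise inequalities whose constants depend on the direction --- in Kovrijkine/Nazarov--Sodin--Volberg-type arguments this last point needs an extra H\"older/Jensen step to integrate the direction-dependent factor without destroying either the linear dependence of the exponent on $\log(M/\tau)$ or the $(K/\gamma)^{K\cdot}$ shape of the constant. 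Until that step is written out, or an independent source for the propagation-of-smallness estimate is supplied, the proposal documents the right route but does not constitute a proof of the proposition.
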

\medskip
%\begin{proposition}\label{NSV_multid}
%Let $0< s < 1$, $A>0$, $R>0$, $d \geq 1$,$0<t \leq 1$, $0< \gamma \leq 1$. Let $E \subset B(0,R)$ be a measurable subset such that $|E| \geq \gamma |B(0,R)|$. There exists a constant $K=K(s, d) \geq 1$ such that for all $f \in \mathcal{C}^{\infty}(B(0,R))$ so that
%\begin{equation*}
%\forall \beta \in \nn^d, \quad \|f \|_{L^{\infty}(B(0,R))} \leq A^{\val \beta} (\val \beta!)^s
%\end{equation*}
% and $\|f\|_{L^{\infty}(B(0,R))} \geq t$, the following estimate holds
%\begin{equation*}
%\int_{B(0,R)}|f(x)|^2 dx \leq \Big(\frac{K}{\gamma}\Big)^{K\big(1-\log t+ (AR)^{\frac{1}{1-s}}\big)} \int_{E} |f(x)|^2 dx.
%\end{equation*}
%\end{proposition}
%By using Proposition~\ref{NSV_multid} and \eqref{weak_thick}, it follows that there exists a positive constant $K=K(d, k, m)\geq 1$, such that for $\lambda \geq \lambda_0 :=\max\Big(\frac{R_0}{c_{d, k, m}},1\Big)^{2k}$
%\begin{multline}
%\|\ g \|_{L^2\big(B(0,c_{d, k, m}\lambda^{\frac{1}{2k}})\big)} \\
%\leq \Big(\frac{K}{\gamma}\Big)^{K\big(1+\eta' \lambda^{\frac{1}{2}\big(\frac{1}{k}+\frac{1}{m}\big)} + (2C c_{d, k, m})^{1+\frac{k}{m}} \lambda^{\frac{1}{2k}+\frac{1}{2m}}\big)} \|g\|_{L^2\big(\omega \cap B(0,c_{d, k, m}\lambda^{\frac{1}{2k}})\big)}.
%\end{multline}
\noindent We deduce from Proposition~\ref{NSV_example}, \eqref{borninf} and \eqref{bernstein_estim290822bis} that there exists a new positive constant $\tilde{K}=\tilde{K}(d, k, m) \geq 1$ such that for all $\lambda \geq \lambda_0=\max\Big(\frac{R_0}{c_{d,k,m}},1\Big)^{2k}$,
\begin{equation*}
\| g \|_{L^2\big(B(0,c_{d, k, m}\lambda^{\frac{1}{2k}})\big)} \leq \Big(\frac{\tilde{K}}{\gamma}\Big)^{\tilde{K} \lambda^{\frac{1}{2}\big(\frac{1}{k}+\frac{1}{m}\big)}} \|g\|_{L^2\big(\omega \cap B(0,c_{d, k, m}\lambda^{\frac{1}{2k}})\big)}.
\end{equation*}
It follows from \eqref{space_decay1} that
\begin{align*}
\|f\|_{L^2(\rr^d)} & \leq \sqrt{2} \|f\|_{L^2\big(B(0,c_{d, k, m}\lambda^{\frac{1}{2k}})\big)} \\ \notag
& \leq \sqrt{2} \Big(\frac{\tilde{K}}{\gamma}\Big)^{\tilde{K}\lambda^{\frac{1}{2}\big(\frac{1}{k}+\frac{1}{m}\big)}} \|f\|_{L^2\big(\omega \cap B(0,c_{d, k, m}\lambda^{\frac{1}{2k}})\big)} \\ \notag
& \leq \sqrt{2} \Big(\frac{\tilde{K}}{\gamma}\Big)^{\tilde{K}\lambda^{\frac{1}{2}\big(\frac{1}{k}+\frac{1}{m}\big)}} \|f\|_{L^2(\omega)}.
\end{align*}
When $0< \lambda\leq \lambda_0= \max\Big(\frac{R_0}{c_{d,k,m}},1\Big)^{2k}$, Lemma~\ref{equiv_norms} implies that there exists $C_{k,m,\lambda_0}(\omega) >0$ such that $$\forall f \in \mathcal{E}^{k,m}_{\lambda}, \quad \|f\|_{L^2(\rr^d)} \leq C_{k,m, \lambda_0}(\omega) \|f\|_{L^2(\omega)},$$
since $$\forall 0< \lambda \leq \lambda_0, \quad  \mathcal{E}^{k,m}_{\lambda} \subset \mathcal{E}^{k,m}_{\lambda_0}.$$
By choosing $\tilde{\kappa}= \max (C_{k,m,\lambda_0}(\omega), \sqrt{2})$, it follows that 
$$\forall f \in \mathcal{E}^{k,m}_{\lambda}, \quad \|f\|_{L^2(\rr^d)} \leq \tilde{\kappa} \Big(\frac{\tilde{K}}{\gamma}\Big)^{\lambda^{\frac{1}{2}\big(\frac{1}{k}+\frac{1}{m}\big)}} \|f\|_{L^2(\omega)}.$$ 

\subsubsection{Third case : $|\omega|>0$}
Since $0<|\omega| \leq +\infty$, there exist $R_0>0, \sigma>0$ such that $$\forall R \geq R_0, \quad |\omega \cap B(0,R)| \geq \sigma>0.$$
With the same notations as in Section~\ref{1_weakly}, Proposition~\ref{NSV_example} implies once again that there exists a positive constant $\tilde{K}=\tilde{K}(d, k, m)\geq 1$ such that for all $\lambda \geq \lambda_0= \max\Big(\frac{R_0}{c_{d, k, m}},1\Big)^{2k}$, $f \in \mathcal{E}^{k,m}_{\lambda}$,
\begin{align*}
\| f \|_{L^2(\rr^d)} & \leq \sqrt{2} \bigg(\frac{\tilde{K}|B(0,1)|c^d_{d,k,m} \lambda^{\frac{d}{2k}}}{\sigma}\bigg)^{\tilde{K} \lambda^{\frac{1}{2}\big(\frac{1}{k}+\frac{1}{m}\big)}} \|f\|_{L^2\big(\omega \cap B(0,c_{d, k, m}\lambda^{\frac{1}{2k}})\big)} \\ \nonumber
& \leq \sqrt{2} \bigg(\frac{\tilde{K}|B(0,1)|c^d_{d,k,m} \lambda^{\frac{d}{2k}}}{\sigma}\bigg)^{\tilde{K} \lambda^{\frac{1}{2}\big(\frac{1}{k}+\frac{1}{m}\big)}} \|f\|_{L^2(\omega)}.
\end{align*}
It follows that there exists a positive constant $\tilde{K}'=\tilde{K}'(\omega,d,k,m) \geq 1$ such that for all $\lambda \geq \lambda_0$ and $f \in \mathcal{E}^{k,m}_{\lambda}$,
\begin{align*}
\|f \|_{L^2(\rr^d)}  & \leq \sqrt{2} \bigg(\frac{\tilde{K}|B(0,1)|c^d_{d,k,m}}{\sigma}\bigg)^{\tilde{K} \lambda^{\frac{1}{2}\big(\frac{1}{k}+\frac{1}{m}\big)}} \big(\lambda^{\frac{d}{2k}}\big)^{\tilde{K} \lambda^{\frac{1}{2}\big(\frac{1}{k}+\frac{1}{m}\big)}} \|f\|_{L^2(\omega)} \\ \nonumber
& \leq \big(\tilde{K}'\big)^{\tilde{K}' \lambda^{\frac{1}{2}\big(\frac{1}{k}+\frac{1}{m}\big)}} e^{\frac{\tilde{K} d}{2k} \lambda^{\frac{1}{2}\big(\frac{1}{k}+\frac{1}{m}\big)} \log \lambda} \|f\|_{L^2(\omega)}.
\end{align*}
It provides a new positive constant, still denoting by $\tilde{K}' \geq 1$, such that for all $\lambda \geq \max(\lambda_0, e)$ and $f \in \mathcal{E}^{k,m}_{\lambda}$,
\begin{equation*}
\|f \|_{L^2(\rr^d)} \leq  e^{\tilde{K}' \lambda^{\frac{1}{2}\big(\frac{1}{k}+\frac{1}{m}\big)} \log \lambda}  \|f\|_{L^2(\omega)}.
\end{equation*}
By using the very same lines as in the previous case, we deduce that for all $\lambda >0$ and $f \in \mathcal{E}^{k,m}_{\lambda}$,
\begin{equation*}
\| f \|_{L^2(\rr^d)}  \leq \tilde{\kappa} e^{\tilde{K}' \lambda^{\frac{1}{2}\big(\frac{1}{k}+\frac{1}{m}\big)}|\log \lambda|} \|f\|_{L^2(\omega)},
\end{equation*}
with $\tilde{\kappa}= \max (C_{k,m,\lambda_0}(\omega), 1)$.
This ends the proof of Theorem~\ref{spec_ineq}.
\section{Proof of the null-controllability result given by Theorem~\ref{control_shubin_hd}}\label{proof_cont_shubin}
This section is devoted to the proof of the null-controllability result given by Theorem~\ref{control_shubin_hd}.
Let $k, m \geq 1$, $s>0$ and $\omega \subset \rr^d$ be a measurable subset of positive Lebesgue measure $|\omega|>0$. Thanks to the Hilbert Uniqueness Method, the null-controllability of the evolution equation
\begin{equation*}
\left\lbrace \begin{array}{ll}
\partial_tf(t,x) + \big((-\Delta_x)^m+|x|^{2k}\big)^s f(t,x)=u(t,x)\un_{\omega}(x), \quad &  x \in \mathbb{R}^d, \ t>0, \\
f|_{t=0}=f_0 \in L^2(\rr^d),                                       &  
\end{array}\right.
\end{equation*}
is equivalent to the final state observability of the adjoint system
\begin{equation}\label{PDE_shubin_adjoint_proof}
\left\lbrace \begin{array}{ll}
\partial_tg(t,x) + \big((-\Delta_x)^m+|x|^{2k}\big)^s g(t,x)=0, \quad &  x \in \mathbb{R}^d, \ t>0, \\
g|_{t=0}=g_0 \in L^2(\rr^d).                                       &  
\end{array}\right.
\end{equation}
The definition of the notion of observability is recalled in Appendix~\ref{control_appendix}. To obtain the observability of the system \eqref{PDE_shubin_adjoint_proof}, we will use the Lebeau-Robbiano method, see Theorem~\ref{Meta_thm_AdaptedLRmethod} in Appendix~\ref{control_appendix}.
Let us assume that $s > \frac{1}{2}(\frac{1}{k}+\frac{1}{m})$. Thanks to Theorem~\ref{spec_ineq}, there exists a positive constant $K=K(\omega,d , k, m)\geq 1$ such that 
\begin{equation*}
\forall \lambda \geq 1, \forall f \in \mathcal{E}^{k,m}_{\lambda}, \quad \|f\|_{L^2(\rr^d)} \leq Ke^{K\lambda^{\frac{1}{2}(\frac{1}{k}+\frac{1}{m})} |\log \lambda|} \|f\|_{L^2(\omega)}.
\end{equation*}
Since $s>\frac{1}{2}(\frac{1}{k}+\frac{1}{m})$, there exist a positive parameter $\frac{1}{2}(\frac{1}{k}+\frac{1}{m})<s’<s$ and a new positive constant $K'=K'(\omega,d, s, s', k, m)>0$ such that
$$\forall \lambda \geq 1, \quad  Ke^{K\lambda^{\frac{1}{2}(\frac{1}{k}+\frac{1}{m})} \log(\lambda)} \leq K e^{K' \lambda^{s'}}.$$
We deduce that the spectral estimates required by Theorem~\ref{Meta_thm_AdaptedLRmethod} hold with the following choice of parameters : $c_1=K'$, $c_1'=K$, $a=s'$ and $\pi_{\lambda}$ defined as the orthogonal projection onto $\mathcal{E}^{k,m}_{\lambda}$. Moreover, we have the following dissipation estimates
\begin{equation*}
\forall \lambda >0, \forall t>0, \forall g \in L^2(\rr^d), \quad \|(1-\pi_{\lambda})(e^{-t \mathcal{H}_{k,m}^s}g)\|^2_{L^2(\rr^d)} \leq e^{-2t \lambda^s} \|g\|^2_{L^2(\rr^d)}.
\end{equation*}
Since $s>s'$, it follows from Theorem~\ref{Meta_thm_AdaptedLRmethod} that there exists a positive constant $C>1$ such that the following observability estimate holds
$$\forall T>0, \forall g \in L^2(\rr^d), \quad \|e^{-T \mathcal{H}_{k,m}^s}g\|^2_{L^2(\rr^d)} \leq C \exp\Big( \frac{C}{T^{\frac{s'}{s-s'}}}\Big) \int_0^T \|e^{-t\mathcal{H}_{k,m}^s}g\|^2_{L^2(\omega)} dt.$$
This ends the proof of Theorem~\ref{control_shubin_hd}.

\section{Appendix}

\subsection{Gelfand-Shilov regularity}\label{gelfand}
This appendix is devoted to recall basic properties about Gelfand-Shilov spaces and to present technical lemmas used in the proof of Proposition~\ref{bernstein_estim}.
We refer the reader to the works~\cite{gelfand_shilov,rodino1,rodino,toft} and the references herein for extensive expositions of the Gelfand-Shilov regularity theory.
The Gelfand-Shilov spaces $S_{\nu}^{\mu}(\rr^d)$, with $\mu,\nu>0$, $\mu+\nu\geq 1$, are defined as the spaces of smooth functions $f \in C^{\infty}(\rr^d)$ satisfying the estimates
$$\exists A,C>0, \quad |\partial_x^{\alpha}f(x)| \leq C A^{\val \alpha}(\alpha !)^{\mu}e^{-\frac{1}{A}|x|^{1/\nu}}, \quad x \in \rr^d, \ \alpha \in \mathbb{N}^d,$$
or, equivalently
$$\exists A,C>0, \quad \sup_{x \in \rr^d}|x^{\beta}\partial_x^{\alpha}f(x)| \leq C A^{\val \alpha+\val \beta}(\alpha !)^{\mu}(\beta !)^{\nu}, \quad \alpha, \beta \in \mathbb{N}^d,$$
with $\alpha!=(\alpha_1!)...(\alpha_d!)$ if $\alpha=(\alpha_1,...,\alpha_d) \in \nn^d$.
These Gelfand-Shilov spaces  $S_{\nu}^{\mu}(\rr^d)$ may also be characterized as the spaces of Schwartz functions $f \in \mathscr{S}(\rr^d)$ satisfying the estimates
$$\exists C>0, \eps>0, \quad |f(x)| \leq C e^{-\eps|x|^{1/\nu}}, \quad x \in \rr^d; \qquad |\widehat{f}(\xi)| \leq C e^{-\eps|\xi|^{1/\mu}}, \quad \xi \in \rr^d.$$

In particular, we notice that Hermite functions belong to the symmetric Gelfand-Shilov space  $S_{1/2}^{1/2}(\rr^d)$. More generally, the symmetric Gelfand-Shilov spaces $S_{\mu}^{\mu}(\rr^d)$, with $\mu \geq 1/2$, can be nicely characterized through the decomposition into the Hermite basis $(\Phi_{\alpha})_{\alpha \in \mathbb{N}^d}$, see e.g. \cite[Proposition~1.2]{toft},
\begin{multline*}
f \in S_{\mu}^{\mu}(\rr^d) \Leftrightarrow f \in L^2(\rr^d), \ \exists t_0>0, \ \big\|\big(\langle f,\Phi_{\alpha}\rangle_{L^2}\exp({t_0\val \alpha^{\frac{1}{2\mu}})}\big)_{\alpha \in \mathbb{N}^d}\big\|_{l^2(\mathbb{N}^d)}<+\infty\\
\Leftrightarrow f \in L^2(\rr^d), \ \exists t_0>0, \ \|e^{t_0\mathcal{H}^{\frac{1}{2\mu}}}f\|_{L^2(\rr^d)}<+\infty,
\end{multline*}
where $\mathcal{H}=-\Delta_x+|x|^2$ stands for the harmonic oscillator. More generally, when the ratio $\frac{\mu}{\nu} \in \mathbb{Q}$ is a rational number, the Gelfand-Shilov space $S^{\mu}_{\nu}(\rr^d)$ can also be nicely characterized through the decomposition into the orthonormal basis of eigenfunctions of a class of anisotropic Shubin operators, whose basic model is the operator
$$\mathcal{H}_{k,m}=(-\Delta_x)^m+|x|^{2k}, \quad x \in \rr^d,$$
with $k, m \geq 1$ two positive integers. Let $(\psi^{k,m}_n)_{n \geq 1}$ be an orthonormal basis of $L^2(\rr^d)$ composed of eigenfunctions of the above operator $\mathcal{H}_{k,m}$. Given a real number $t \geq 1$, the Gelfand-Shilov space $S^{\mu}_{\nu}(\rr^d)$, with
$$\mu=\frac{kt}{k+m} \quad \text{and} \quad \nu=\frac{mt}{k+m},$$
are characterized in the following way, thanks to the result \cite[Theorem~1.4]{cappiello} by Cappiello, Gramchev, Pilipović and Rodino, 
$$f \in S^{\frac{kt}{k+m}}_{\frac{mt}{k+m}}(\rr^d) \iff \exists \eps >0, \quad \sum_{n=1}^{+\infty} |\langle f, \psi^{k,m}_n \rangle_{L^2(\rr^d)}|^2 e^{\eps \lambda_n^{\frac{k+m}{2kmt}}} <+\infty,$$
where $\lambda_n>0$ is the eigenvalue associated to the eigenfunction $\psi^{k,m}_n \in L^2(\rr^d)$ for all $n \geq 1$. Such a characterization in the case when $\mu/\nu \notin \mathbb Q$ has not been found yet.
We end this section by proving two technical lemmas:

\begin{lemma}\label{croch}
Let $\mu, \nu >0$ such that $\mu+\nu \geq 1$, $C>0$ and $A \geq 1$. If $f \in S_{\nu}^{\mu}(\rr^d)$ satisfies
\begin{equation}\label{gs_estim}
\forall \alpha \in \nn^d, \forall \beta \in \nn^d, \quad \| x^{\alpha} \partial_x^{\beta} f \|_{L^2(\rr^d)} \leq C A^{\val \alpha+\val \beta} (\alpha!)^{\nu} (\beta!)^{\mu},
\end{equation}
then, it satisfies 
\begin{equation*}
\forall p \in \nn, \forall \beta \in \nn^d, \quad \|\langle x \rangle^{p} \partial_x^{\beta} f \|_{L^2(\rr^d)} \leq C (d+1)^{\frac{p}{2}}A^{p+\val \beta} (p!)^{\nu} (\val \beta!)^{\mu}.
\end{equation*}
\end{lemma}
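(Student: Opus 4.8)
The plan is to exploit the fact that, although $\langle x\rangle^{p}$ is not a polynomial for odd $p$, the even power $\langle x\rangle^{2p}$ is, and to carry out the estimate at the level of the \emph{squared} $L^2$-norm. Concretely, I would first expand, by the multinomial theorem applied to $\langle x\rangle^{2}=1+x_1^2+\cdots+x_d^2$,
\[
\langle x\rangle^{2p}=\big(1+x_1^2+\cdots+x_d^2\big)^{p}=\sum_{j_0+j_1+\cdots+j_d=p}\binom{p}{j_0,j_1,\dots,j_d}\,x_1^{2j_1}\cdots x_d^{2j_d},
\]
so that, since this is a finite sum, linearity of the integral gives
\[
\|\langle x\rangle^{p}\partial_x^{\beta}f\|_{L^2(\rr^d)}^2=\sum_{j_0+\cdots+j_d=p}\binom{p}{j_0,\dots,j_d}\,\|x^{\gamma}\partial_x^{\beta}f\|_{L^2(\rr^d)}^2,\qquad\gamma=(j_1,\dots,j_d)\in\nn^d,
\]
where $\val\gamma=p-j_0\le p$, and where I used $x_1^{2j_1}\cdots x_d^{2j_d}=\val{x^{\gamma}}^2$. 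All terms are finite and nonnegative by \eqref{gs_estim}.

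Next I would bound each summand with the hypothesis $\|x^{\gamma}\partial_x^{\beta}f\|_{L^2}^2\le C^2A^{2\val\gamma+2\val\beta}(\gamma!)^{2\nu}(\beta!)^{2\mu}$ and then eliminate the $\gamma$-dependence with three elementary inequalities: since $A\ge1$ and $\val\gamma\le p$ one has $A^{2\val\gamma}\le A^{2p}$; since the multinomial coefficient $\binom{\val\gamma}{j_1,\dots,j_d}$ is at least $1$ one has $\gamma!\le\val\gamma!\le p!$, hence $(\gamma!)^{2\nu}\le(p!)^{2\nu}$; and likewise $\beta!\le\val\beta!$, so $(\beta!)^{2\mu}\le(\val\beta!)^{2\mu}$ (only $\mu,\nu>0$ is used, no lower bound is needed). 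After pulling these $\gamma$-independent factors out of the sum, what remains is $\sum_{j_0+\cdots+j_d=p}\binom{p}{j_0,\dots,j_d}=(\underbrace{1+\cdots+1}_{d+1})^{p}=(d+1)^{p}$, again by the multinomial theorem. This yields
\[
\|\langle x\rangle^{p}\partial_x^{\beta}f\|_{L^2(\rr^d)}^2\le C^2\,(d+1)^{p}\,A^{2(p+\val\beta)}\,(p!)^{2\nu}\,(\val\beta!)^{2\mu},
\]
and taking square roots gives exactly the claimed estimate.

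I do not expect a genuine obstacle here: the argument is entirely elementary, and the only delicate point is the bookkeeping with factorials, specifically the chain $\gamma!\le\val\gamma!\le p!$, which is what lets the final bound depend on $p$ alone rather than on the full multi-index $\gamma$. It is worth recording why one passes through $\langle x\rangle^{2p}$ and squared norms rather than expanding $\langle x\rangle^{p}$ directly: from the cruder pointwise bound $\langle x\rangle\le1+\val{x_1}+\cdots+\val{x_d}$ one could expand $\langle x\rangle^{p}$ and apply the triangle inequality in $L^2$, but that route produces the weaker constant $(d+1)^{p}$ in place of $(d+1)^{p/2}$; the exponent $\tfrac p2$ in the statement is precisely the gain coming from the exact polynomial expansion of $\langle x\rangle^{2p}$.
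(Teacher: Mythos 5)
Your proof is correct and follows essentially the same route as the paper: expand $\langle x\rangle^{2p}$ by the multinomial theorem at the level of the squared $L^2$-norm, bound each term via \eqref{gs_estim} together with $A\ge 1$, $\gamma!\le\val\gamma!\le p!$ and $\beta!\le\val\beta!$, and resum the multinomial coefficients to $(d+1)^p$. No gaps.
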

\begin{proof}
Let $f \in S_{\nu}^{\mu}(\rr^d)$ satisfying the estimates \eqref{gs_estim}. By using Newton formula, we obtain that for all $p \in \nn$, $\beta \in \nn^d$,
\begin{multline}\label{croch_estim}
\|\langle x \rangle^p \partial_x^{\beta} f \|^2_{L^2(\rr^d)} = \int_{\rr^d} \Big(1+\sum_{i=1}^d x_i^2 \Big)^p |\partial_x^{\beta}f(x)|^2 dx \\
= \int_{\rr^d} \sum_{\substack{\gamma \in \nn^{d+1}, \\ \val \gamma=p}} \frac{p!}{\gamma!} x^{2\tilde{\gamma}} |\partial_x^{\beta}f(x)|^2 dx = \sum_{\substack{\gamma \in \nn^{d+1}, \\ \val \gamma=p}} \frac{p!}{\gamma!} \|x^{\tilde{\gamma}} \partial_x^{\beta} f \|^2_{L^2(\rr^d)},
\end{multline}
where we denote $\tilde{\gamma}=(\gamma_1,...,\gamma_d) \in \nn^d$ if $\gamma=(\gamma_1,...,\gamma_{d+1}) \in \nn^{d+1}$. Since for all $\alpha \in \nn^d$, $\alpha! \leq (\val \alpha)!$, it follows from \eqref{gs_estim} and \eqref{croch_estim} that
\begin{align*}
\|\langle x \rangle^p \partial_x^{\beta} f \|^2_{L^2(\rr^d)} & \leq C^2\sum_{\substack{\gamma \in \nn^{d+1}, \\ \val \gamma=p}} \frac{p!}{\gamma!} A^{2(\val{\tilde{\gamma}}+\val \beta)} (\val{\tilde{\gamma}}!)^{2\nu} (\val \beta!)^{2\mu} \\
& \leq C^2 (d+1)^p A^{2(p+\val \beta)} (p!)^{2\nu} (\val \beta!)^{2\mu},
\end{align*}
since $$ \sum_{\substack{\gamma \in \nn^{d+1}, \\ \val \gamma=p}} \frac{p!}{\gamma!}  = (d+1)^p.$$
\end{proof}
\begin{lemma}\label{interpolation}
Let $\mu, \nu >0$ such that $\mu+\nu \geq 1$, $0 \leq \delta \leq 1$, $C>0$ and $A \geq 1$. If $f \in S_{\nu}^{\mu}(\rr^d)$ satisfies
\begin{equation}\label{int}
\forall p \in \nn, \forall \beta \in \nn^d, \quad \|\langle x \rangle^p \partial_x^{\beta} f \|_{L^2(\rr^d)} \leq C A^{p+\val \beta} (p!)^{\nu} (\val \beta!)^{\mu},
\end{equation}
then, it satisfies 
\begin{equation*}
\forall p \in \nn, \forall \beta \in \nn^d, \quad \|\langle x \rangle^{\delta p} \partial_x^{\beta} f \|_{L^2(\rr^d)} \leq C(8^{\nu} e^{\nu}A)^{p+\val \beta} (p!)^{\delta \nu} (\val \beta!)^{\mu}.
\end{equation*}
\end{lemma}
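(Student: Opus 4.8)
The plan is to reduce the fractional power $\langle x\rangle^{\delta p}$ to an integer power of $\langle x\rangle$, apply the hypothesis \eqref{int}, and then absorb the resulting factorial into $(p!)^{\delta\nu}$ by an elementary inequality; the Gelfand--Shilov membership is used only to guarantee that the $L^2$ norms involved are finite, so that \eqref{int} may be invoked.

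Concretely, I would fix $p\in\nn$ and $\beta\in\nn^d$ and set $q=\lceil\delta p\rceil\in\nn$. Since $\langle x\rangle\geq1$ and $0\leq\delta\leq1$, one has $\langle x\rangle^{\delta p}\leq\langle x\rangle^{q}$ pointwise on $\rr^d$, hence by \eqref{int}
\[
\|\langle x\rangle^{\delta p}\partial_x^{\beta}f\|_{L^2(\rr^d)}\leq\|\langle x\rangle^{q}\partial_x^{\beta}f\|_{L^2(\rr^d)}\leq CA^{q+|\beta|}(q!)^{\nu}(|\beta|!)^{\mu}.
\]
Because $\delta p\leq p$ and $p$ is an integer we get $q\leq p$, so $A^{q+|\beta|}\leq A^{p+|\beta|}$ (recall $A\geq1$). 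It then remains only to prove the elementary bound $q!\leq(8e)^{p}(p!)^{\delta}$, equivalently $(q!)^{\nu}\leq(8^{\nu}e^{\nu})^{p}(p!)^{\delta\nu}$, for this gives, using $8^{\nu}e^{\nu}\geq1$,
\[
\|\langle x\rangle^{\delta p}\partial_x^{\beta}f\|_{L^2(\rr^d)}\leq C(8^{\nu}e^{\nu})^{p}A^{p+|\beta|}(p!)^{\delta\nu}(|\beta|!)^{\mu}\leq C(8^{\nu}e^{\nu}A)^{p+|\beta|}(p!)^{\delta\nu}(|\beta|!)^{\mu}.
\]

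The inequality $q!\leq(8e)^{p}(p!)^{\delta}$ is the heart of the matter. For $p=0$ it is an equality, and if $q=0$ it is trivial, so I would assume $p\geq1$ and $1\leq q\leq\delta p+1$; in particular $\delta p>0$. Since $t\mapsto t^{t}$ is nondecreasing on $[1,\infty)$, $q!\leq q^{q}\leq(\delta p+1)^{\delta p+1}=(\delta p+1)(\delta p+1)^{\delta p}$; then $(\delta p+1)^{\delta p}=(\delta p)^{\delta p}(1+\tfrac{1}{\delta p})^{\delta p}\leq e\,(\delta p)^{\delta p}\leq e\,p^{\delta p}$ because $0<\delta\leq1$, and $p^{\delta p}=(p^{p})^{\delta}\leq(e^{p}p!)^{\delta}=e^{\delta p}(p!)^{\delta}\leq e^{p}(p!)^{\delta}$ using $p^{p}\leq e^{p}p!$. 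Collecting these bounds yields $q!\leq(p+1)e^{p+1}(p!)^{\delta}$, and since $(p+1)e\leq8^{p}$ for every $p\geq1$ this gives $q!\leq(8e)^{p}(p!)^{\delta}$, as wanted. The only point requiring a little care is tracking constants so that the base of the exponential factor is exactly $8^{\nu}e^{\nu}A$ rather than something larger, which is what forces the somewhat crude intermediate estimates above; there is no genuine obstacle beyond this bookkeeping.
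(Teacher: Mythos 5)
Your proof is correct, and it takes a genuinely different route from the paper. The paper proves an intermediate bound for \emph{all} real exponents $r\geq 0$ by H\"older interpolation between the two consecutive integer powers, namely
$\|\langle x\rangle^{r}\partial_x^{\beta}f\|_{L^2}^2\leq\|\langle x\rangle^{\lfloor r\rfloor}\partial_x^{\beta}f\|_{L^2}^{2(\lfloor r\rfloor+1-r)}\,\|\langle x\rangle^{\lfloor r\rfloor+1}\partial_x^{\beta}f\|_{L^2}^{2(r-\lfloor r\rfloor)}$,
which combined with \eqref{int} gives $\|\langle x\rangle^{r}\partial_x^{\beta}f\|_{L^2}\leq CA^{r+\val\beta}(r+1)^{(r+1)\nu}(\val\beta!)^{\mu}$, and then specializes to $r=\delta p$ and runs essentially the same elementary estimates ($p+1\leq 2p\leq 2^p$, $p^p\leq e^pp!$) that you use. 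You instead bypass H\"older altogether by exploiting $\langle x\rangle\geq 1$, so that $\langle x\rangle^{\delta p}\leq\langle x\rangle^{\lceil\delta p\rceil}$ pointwise, apply \eqref{int} at the integer $q=\lceil\delta p\rceil\leq p$, and reduce everything to the combinatorial inequality $q!\leq(8e)^p(p!)^{\delta}$, which you verify correctly (the chain $q!\leq q^q\leq(\delta p+1)^{\delta p+1}\leq(p+1)e^{p+1}(p!)^{\delta}\leq(8e)^p(p!)^{\delta}$ is sound, including the degenerate cases $p=0$ and $q=0$). Both arguments land on exactly the constant $8^{\nu}e^{\nu}A$ stated in the lemma. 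What the paper's interpolation buys is a bound valid for arbitrary real exponents and for weights that need not be bounded below by $1$ (it would survive replacing $\langle x\rangle$ by $|x|$); what your argument buys is simplicity, since it uses only the monotonicity of $t\mapsto\langle x\rangle^{t}$ and elementary factorial estimates, at the cost of relying crucially on $\langle x\rangle\geq 1$, which is harmless here.
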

\begin{proof}
Let $f \in S_{\nu}^{\mu}(\rr^d)$ satisfying the estimates \eqref{int}. 
It follows from H\"older inequality that for all $r \in (0,+\infty) \setminus \nn$ and $\beta \in \nn^d$,
\begin{multline}\label{holder0}
\|\langle x \rangle^r \partial_x^{\beta} f \|^2_{L^2(\rr^d)} = \int_{\rr^d} \big(\langle x \rangle^{2 \lfloor r \rfloor} |\partial^{\beta}_x f(x)|^2\big)^{\lfloor r \rfloor +1-r} \big(\langle x \rangle^{2(\lfloor r \rfloor+1)} |\partial^{\beta}_x f(x)|^2\big)^{r-\lfloor r \rfloor} dx \\
\leq \|\langle x \rangle^{\lfloor r \rfloor} \partial_x^{\beta} f \|^{2(\lfloor r \rfloor +1- r)}_{L^2(\rr^d)} \|\langle x \rangle^{\lfloor r \rfloor+1} \partial_x^{\beta} f \|^{2(r-\lfloor r \rfloor)}_{L^2(\rr^d)},
\end{multline}
where $\lfloor \cdot \rfloor$ denotes the floor function.
Since the above inequality clearly holds for $r \in \nn$, we deduce from \eqref{int} and \eqref{holder0} that for all $r \geq 0$ and $\beta \in \nn^d$,
\begin{align}\label{GS_1}
\|\langle x \rangle^r \partial_x^{\beta} f \|_{L^2(\rr^d)} & \leq C A^{r+\val \beta} (\lfloor r \rfloor!)^{(\lfloor r \rfloor +1-r) \nu} \big((\lfloor r \rfloor+1)!\big)^{(r-\lfloor r \rfloor) \nu} (\val \beta!)^{\mu} \\ \nonumber
& \leq C A^{r+\val \beta} \big((\lfloor r \rfloor+1)!\big)^{\nu} (\val \beta!)^{\mu} \\ \nonumber
& \leq  C A^{r+\val \beta} (\lfloor r \rfloor+1)^{(\lfloor r \rfloor+1)\nu} (\val \beta!)^{\mu} \\ \nonumber
& \leq C A^{r+\val \beta} (r+1)^{(r+1)\nu} (\val \beta!)^{\mu}.
\end{align}
It follows from \eqref{GS_1} that for all $p \in \nn^*$, $\beta \in \nn^d$,
\begin{align}\label{puiss_frac}
\|\langle x \rangle^{\delta p} \partial_x^{\beta} f \|_{L^2(\rr^d)} & \leq C A^{p+\val \beta} (p+1)^{(\delta p+1)\nu} (\val \beta!)^{\mu} \leq C A^{p+\val \beta} (2p)^{(\delta p+1)\nu} (\val \beta!)^{\mu} \\ \notag
& \leq C (2^{\nu}A)^{p+\val \beta} p^{\nu} (2p)^{\delta \nu p} (\val \beta!)^{\mu} \leq C (8^{\nu}e^{\nu}A)^{p+\val \beta} (p!)^{\delta \nu} (\val \beta!)^{\mu},
\end{align}
since for all positive integer $p \geq 1$,
\begin{equation*}
p+1 \leq 2p \leq 2^p \quad \text{and} \quad p^p \leq e^p p!.
\end{equation*}
Notice that from \eqref{int}, since $8^{\nu} e^{\nu} \geq 1$, estimates \eqref{puiss_frac} also hold for $p=0$.  This ends the proof of Lemma~\ref{interpolation}.
\end{proof}
\subsection{Null-controllability and observability of linear evolution equations}\label{control_appendix}
In this section, we recall the link between observability and null-controllability for linear evolution equations. Let us consider a localized control system
\begin{equation}\label{syst_general_annexe}
\left\lbrace \begin{array}{ll}
(\partial_t + P)f(t,x)=u(t,x)\un_{\omega}(x), \quad &  x \in \mathbb{R}^d,\ t>0, \\
f|_{t=0}=f_0 \in L^2(\rr^d),                                       &  
\end{array}\right.
\end{equation}
associated to $P$ be a closed operator on $L^2(\rr^d)$, which is the infinitesimal generator of a strongly continuous semigroup $(e^{-tP})_{t \geq 0}$ on $L^2(\rr^d)$ and where $\omega$ is a measurable subset of $\mathbb{R}^d$. 

By the Hilbert Uniqueness Method, see \cite{coron_book} (Theorem~2.44) or \cite{JLL_book}, the null-controllability of the evolution equation \eqref{syst_general_annexe} is equivalent to the observability of the adjoint system 
\begin{equation} \label{adj_general}
\left\lbrace \begin{array}{ll}
(\partial_t + P^*)g(t,x)=0, \quad & x \in \mathbb{R}^d, \ t>0, \\
g|_{t=0}=g_0 \in L^2(\rr^d),
\end{array}\right.
\end{equation}
where $P^*$ denotes the $L^2(\rr^d)$-adjoint of $P$. 
The notion of observability is defined as follows:

\medskip

\begin{definition} [Observability] Let $T>0$ and $\omega$ be a measurable subset of $\mathbb{R}^d$. 
The evolution equation \eqref{adj_general} is said to be {\em observable from the set $\omega$ in time} $T>0$, if there exists a positive constant $C_T>0$ such that,
for any initial datum $g_0 \in L^{2}(\mathbb{R}^d)$, the mild \emph{(}or semigroup\emph{)} solution of \eqref{adj_general} satisfies
\begin{equation*}\label{eq:observability}
\int\limits_{\mathbb{R}^d} |g(T,x)|^{2} dx  \leq C_T \int\limits_{0}^{T} \Big(\int\limits_{\omega} |g(t,x)|^{2} dx\Big) dt\,.
\end{equation*}
\end{definition}

\medskip
The following result is an adapted Lebeau-Robbiano method which is a simplified formulation of Theorem~3.2 in~\cite{egidi} limited to the semigroup case with fixed control supports and weaker dissipation estimates than in~\cite{KK1} (Theorem~2.1):

\medskip

\begin{theorem}[Beauchard, Egidi \& Pravda-Starov] \label{Meta_thm_AdaptedLRmethod}
 Let $\Omega$ be an open subset of $\mathbb{R}^d$,
 $\omega$ be a measurable subset of $\Omega$,
 $(\pi_k)_{k \geq 1}$ be a family of orthogonal projections on $L^2(\Omega)$,
 $(e^{-tA})_{t \geq 0}$ be a strongly continuous contraction semigroup on $L^2(\Omega)$; 
 $c_1, c_2, c_1', c_2',a, b, t_0, m_1>0 $ be positive constants with $a<b$; $m_2 \geq 0$.
If the following spectral inequality
\begin{equation*} \label{Meta_thm_IS}
\forall g \in L^2(\Omega), \forall k \geq 1, \quad \|\pi_k g \|_{L^2(\Omega)} \leq c_1' e^{c_1 k^a} \|\pi_k g \|_{L^2(\omega)},
\end{equation*}
and the following dissipation estimate with controlled blow-up 
\begin{equation*} \label{Meta_thm_dissip}
\forall g \in L^2(\Omega), \forall k \geq 1, \forall 0<t<t_0, \quad \| (1-\pi_k)(e^{-tA} g)\|_{L^2(\Omega)} \leq \frac{e^{-c_2 t^{m_1} k^b}}{c_2' t^{m_2}} \|g\|_{L^2(\Omega)},
\end{equation*}
hold, then there exists a positive constant $C>1$ such that the following observability estimate holds
\begin{equation*} \label{meta_thm_IO}
\forall T>0, \forall g \in L^2(\Omega), \quad \| e^{-TA} g \|_{L^2(\Omega)}^2 \leq C\exp\Big(\frac{C}{T^{\frac{am_1}{b-a}}}\Big) \int_0^T \|e^{-tA} g \|_{L^2(\omega)}^2 dt.
\end{equation*}
\end{theorem}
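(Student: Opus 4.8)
The plan is to run the Lebeau--Robbiano ``bracketing'' iteration directly: one extracts the observability estimate from the competition between the spectral inequality, which is expensive but only sees low modes, and the dissipation estimate, which is cheap and crushes high modes; the hypothesis $a<b$ is exactly what tips this competition in our favour.

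First I fix $T>0$, write $g_t=e^{-tA}g$, and note that $t\mapsto\|g_t\|_{L^2(\Omega)}$ is non-increasing since $(e^{-tA})_{t\ge0}$ is a contraction semigroup; hence it suffices to bound $\|g_{T/2}\|_{L^2(\Omega)}^2$ by the observation on $(0,T/2)$. I introduce a partition $0=a_0<a_1<\cdots$ of $[0,T/2)$ with $a_k\uparrow T/2$, lengths $\tau_k=a_{k+1}-a_k$, and an increasing sequence of integer cutoffs $\lambda_k$ (all fixed below). On $I_k=(a_k,a_{k+1})$ I pick, by the mean value inequality, a time $\sigma_k$ in the right half of $I_k$ with $\|g_{\sigma_k}\|_{L^2(\omega)}^2\le \frac{2}{\tau_k}\int_{I_k}\|g_t\|_{L^2(\omega)}^2\,dt$. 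Writing $g_{\sigma_k}=\pi_{\lambda_k}g_{\sigma_k}+(1-\pi_{\lambda_k})g_{\sigma_k}$, the spectral inequality bounds the low part by its restriction to $\omega$, which in turn is bounded by $\|g_{\sigma_k}\|_{L^2(\omega)}$ plus $\|(1-\pi_{\lambda_k})g_{\sigma_k}\|_{L^2(\Omega)}$; since $(1-\pi_{\lambda_k})g_{\sigma_k}=(1-\pi_{\lambda_k})e^{-(\sigma_k-a_k)A}g_{a_k}$ and $\sigma_k-a_k\ge\tau_k/2$, the dissipation estimate controls it by $(c_2')^{-1}(\tau_k/2)^{-m_2}e^{-c_2(\tau_k/2)^{m_1}\lambda_k^b}\|g_{a_k}\|_{L^2(\Omega)}$. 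Using finally $\|g_{a_{k+1}}\|_{L^2(\Omega)}\le\|g_{\sigma_k}\|_{L^2(\Omega)}$ (contraction), I obtain
\[
\|g_{a_{k+1}}\|_{L^2(\Omega)}^2\;\le\;\mathcal A_k\int_{I_k}\|g_t\|_{L^2(\omega)}^2\,dt\;+\;\mathcal B_k\,\|g_{a_k}\|_{L^2(\Omega)}^2,
\]
with $\mathcal A_k\asymp \tau_k^{-1}e^{2c_1\lambda_k^a}$ and $\mathcal B_k\asymp \tau_k^{-2m_2}(1+e^{2c_1\lambda_k^a})e^{-c\,\tau_k^{m_1}\lambda_k^b}$ (no commutation of $\pi_{\lambda_k}$ with the semigroup is used). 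Iterating down to $a_0=0$ gives
\[
\|g_{a_{k+1}}\|_{L^2(\Omega)}^2\;\le\;\sum_{j=0}^{k}\Bigl(\prod_{i=j+1}^{k}\mathcal B_i\Bigr)\mathcal A_j\int_{I_j}\|g_t\|_{L^2(\omega)}^2\,dt\;+\;\Bigl(\prod_{i=1}^{k}\mathcal B_i\Bigr)\|g\|_{L^2(\Omega)}^2.
\]

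It remains to choose the scales so that the right-hand side converges and the last term dies. Taking $\lambda_k=\mu_T\rho^{\,k}$ with a fixed ratio $\rho>1$ and $\tau_k=c_T\rho^{-k(b-a)/m_1}$ makes $\tau_k^{m_1}\lambda_k^{b-a}$ independent of $k$; normalising $\sum_k\tau_k=T/2$ forces $c_T\asymp T$, and then choosing $\mu_T\asymp T^{-m_1/(b-a)}$ large enough makes $c_2(\tau_k/2)^{m_1}\lambda_k^b$ exceed $4c_1\lambda_k^a$ for every $k$, so that $\mathcal B_k\asymp\mathrm{poly}(k)\,e^{-2c_1\lambda_k^a}$ and $\mathcal B_k\mathcal A_{k-1}\le1$ for all large $k$. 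Then $\prod_{i\ge1}\mathcal B_i=0$ by super-exponential decay, which kills the last term as $k\to\infty$, while $\|g_{a_{k+1}}\|_{L^2(\Omega)}^2\to\|g_{T/2}\|_{L^2(\Omega)}^2\ge\|g_T\|_{L^2(\Omega)}^2$; and $\sup_j(\prod_{i>j}\mathcal B_i)\mathcal A_j$ is governed by its value at $j=1$, which is $\asymp\mathcal A_1\asymp\exp(C\mu_T^a)=\exp(CT^{-am_1/(b-a)})$. Re-summing the geometric pieces reconstitutes $\int_0^T\|g_t\|_{L^2(\omega)}^2\,dt$, yielding the claimed observability estimate with constant $C\exp(C/T^{am_1/(b-a)})$.

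The main obstacle is precisely this quantitative bookkeeping: the three scales $\rho$, $\mu_T$, $(\tau_k)$ must be tuned \emph{simultaneously} so that (i) $\sum_k\tau_k$ is comparable to $T$, (ii) at each stage the dissipation gain strictly dominates the spectral cost of the next stage by a universal factor, and (iii) the prefactors that grow polynomially in $\lambda_k$ (coming from $\tau_k^{-1}$, $\tau_k^{-m_2}$, $c_1'$, $(c_2')^{-1}$) are swallowed by the super-exponential margin — all while tracking the dependence on $T$ precisely enough to reach the sharp exponent $\frac{am_1}{b-a}$ and not a lossy one. A shortcut, which is in fact how the result is invoked here, is to quote the abstract Lebeau--Robbiano theorem of Beauchard--Egidi--Pravda-Starov (Theorem~3.2 in~\cite{egidi}) as a black box.
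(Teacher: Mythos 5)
The paper offers no proof of this statement: it is imported verbatim as a simplified form of Theorem~3.2 in \cite{egidi} (see also Theorem~2.1 in \cite{KK1}), so the only ``proof'' in the paper is the citation — which you yourself identify in your closing sentence as the way the result is actually invoked. That said, your sketch is the genuine argument behind the black box, and it is structurally correct: the recursion $\|g_{a_{k+1}}\|^2\le\mathcal A_k\int_{I_k}\|g_t\|_{L^2(\omega)}^2\,dt+\mathcal B_k\|g_{a_k}\|^2$ obtained by splitting $g_{\sigma_k}$ into $\pi_{\lambda_k}g_{\sigma_k}$ and its complement, observing the low part via the spectral inequality and crushing the high part via dissipation over the half-interval, is exactly the telescoping Lebeau--Robbiano scheme of \cite{egidi,KK1}; and your scales $\lambda_k=\mu_T\rho^k$, $\tau_k=c_T\rho^{-k(b-a)/m_1}$ do make $\tau_k^{m_1}\lambda_k^{b-a}$ constant, so that a single choice of $\mu_T\asymp T^{-m_1/(b-a)}$ lets dissipation dominate the spectral cost at every stage and produces the sharp exponent $am_1/(b-a)$. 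Three routine points would need attention in a self-contained write-up: the dissipation estimate is only assumed for $0<t<t_0$, so for large $T$ one must observe on a window of length $\min(T,t_0)$ and use the semigroup contraction to reduce to that case (this is also why the constant cannot improve as $T\to\infty$); the cutoffs $\lambda_k$ are indices $k\ge1$ and must be taken as integers bounded below by $1$, e.g.\ $\lceil\mu_T\rho^k\rceil$, which only changes constants; and the assertion that $\sup_j\bigl(\prod_{i>j}\mathcal B_i\bigr)\mathcal A_j$ is governed by the first term should be justified by the domination $\mathcal B_{j+1}\mathcal A_j\le\mathcal A_{j}/\mathcal A_{j+1}\cdot\mathcal A_{j+1}\mathcal B_{j+1}\le 1$ for all $j$ after enlarging $\mu_T$, so that the series telescopes against $\int_0^T\|g_t\|_{L^2(\omega)}^2\,dt$. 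None of these affects the validity of the approach; for the purposes of this paper, quoting \cite{egidi} is the intended route.
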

\noindent Theorem~\ref{Meta_thm_AdaptedLRmethod} is used in the proof of Theorem~\ref{control_shubin_hd}.

\end{document}